\definecolor{Prune}{RGB}{99,0,60}
\theoremstyle{plain}
\title{Volumes of moduli spaces of directed ribbon graphs and Cut-and-Join operators}
\newtheorem{prop}{Proposition}
\newtheorem{thm}{Theorem}
\newtheorem{lem}{Lemma}
\newtheorem{cor}{Corollary}
\newtheorem{rem}{Remark}
\newcommand\Lbord{L_{\partial}}
\newcommand\End{\text{End}}
\newcommand\C{\mathbb{C}}
\newcommand\R{\mathbb{R}}
\newcommand\Rp{\mathbb{R}_{\ge 0}}
\newcommand\Rpp{\mathbb{R}_{>0}}
\newcommand\Q{\mathbb{Q}}
\newcommand\Z{\mathbb{Z}}
\newcommand\N{\mathbb{N}}
\newcommand\Npp{\mathbb{N}_{>0}}
\newcommand\Aut{\text{Aut}}
\newcommand{\Pc}{\mathcal{P}}
\newcommand\Hom{\text{Hom}}
\newcommand\M{\mathcal{M}}
\newcommand\Mc{\mathcal{M}^{comb}}
\newcommand\id{\operatorname{Id}}
\newcommand{\Rt}{\tilde{R}}
\newcommand\pr{\text{pr}}
\newcommand\Kb{\bold{K}}
\newcommand\Zb{\bold{Z}}
\newcommand\Kbull{K^{\bullet}}
\newcommand\s{\mathfrak{s}}
\newcommand\lf{\mathfrak{l}}
\newcommand\cf{\mathfrak{c}}
\newcommand\tb{\bold{t}}
\newcommand\alphab{\boldsymbol{\alpha}}
\newcommand\A{\mathcal{A}}
\newcommand\Dh{\hat{\mathcal{D}}}
\newcommand\eb{\bold{e}}
\newcommand\ez{\eb_{\emptyset}}
\newcommand\Zbull{Z^{\bullet}}
\newcommand\Rot{\tilde{R}^{\circ}}
\newcommand\D{\mathcal{D}}
\newcommand\bord{\textbf{bord}}
\newcommand\bordo{\textbf{bord}^{\circ}}
\newcommand\bordot{\widetilde{\textbf{bord}}^{\circ}}
\newcommand\bordoct{\widetilde{\textbf{bord}}^{\circ,\cf}}
\newcommand\bordoc{\textbf{bord}^{\circ,\cf}}
\newcommand\bordos{\textbf{bord}^{\circ,\s}}
\newcommand\bordosc{\textbf{bord}^{\circ,\s,\cf}}
\newcommand\bordol{\textbf{bord}^{\circ}_{\lf}}
\newcommand\acyclol{\textbf{acycl}_{\lf}}
\newcommand\acyclogen{\textbf{acycl}^{*}}
\newcommand\acyclolp{\widetilde{\textbf{acycl}}_{\lf}}
\newcommand\acyclop{\widetilde{\textbf{acycl}}}
\newcommand\acyclo{\textbf{acycl}}
\newcommand\Mo{M^{\circ}}
\newcommand\Mol{M^{\circ}_{\lf}}
\newcommand\Go{\mathcal{G}^{\circ}}
\newcommand\Gol{\mathcal{G}^{\circ}_{\lf}}
\newcommand\G{\mathcal{G}}
\newcommand\Ro{R^{\circ}}
\newcommand\Met{\text{Met}}
\newcommand\rib{\text{rib}}
\begin{document}
\author{Simon Barazer}
\maketitle

\begin{abstract}
    In this paper, we investigate the algebraic structure underlying the acyclic decomposition introduced in \cite{barazer2021cuttingorientableribbongraphs}. This decomposition applies to directed metric ribbon graphs and enables the recursive computation of the volumes of their moduli spaces. Building on this, we define integral operators with these volumes and show that they satisfy a Cut-and-Join type equation. Furthermore, we demonstrate that a suitable specialization of these operators gives rise to a generating series for \textit{dessins d'enfants}. 
\end{abstract}

\section{Introduction}
\label{section_intro}
In this paper, we study \textit{directed ribbon graphs}, which were previously investigated in \cite{barazer2021cuttingorientableribbongraphs,yakovlev2022contribution}. These are graphs embedded in a surface with boundary \cite{kontsevich1992intersection}, equipped with a coherent orientation of the edges (Subsection \ref{subesection_directed_ribbon_graphs}). Equivalently, their duals are bipartite maps, and then, their boundary components are labeled by $\pm$. A surface with boundaries labeled by $\pm$ will be called a \textit{directed surface}.\\
In \cite{barazer2021cuttingorientableribbongraphs}, a decomposition of directed ribbon graphs into graphs with a single vertex was found, it induces a recursion formula for the volumes of moduli spaces of directed metric ribbon graphs. In the present work, we explore some algebraic aspects of this decomposition.\\
We focus specifically on the principal strata of these moduli spaces, which correspond to four-valent ribbon graphs. The general case will be presented in a forthcoming paper. The volume of the moduli space of directed four-valent ribbon graphs of genus $g$ with $n^\pm$ boundary components of type $\pm$ is a function:
\begin{equation*}
   V_{g,n^+,n^-}(L^+|L^-),
\end{equation*}
defined on the subset:
\begin{equation*}
\label{formula_Lambda_1}
\Lambda_{n^+,n^-}=\{(L^+,L^-)|\sum_j L_i^+=\sum_j L_j\}\subset\Rp^{n^+}\times\Rp^{n^-}.
\end{equation*}
These functions are continuous piece-wise polynomials. Here we do not study the functions $V_{g,n^+,n^-}$ directly; instead, we adopt the following point of view. The presence of two sets of variables naturally gives rise to a family of integration kernels:
\begin{equation*}
P \to \int_{L^-}\frac{V_{g,n^+,n^-}(L^+|L^-)P(L^-)}{n^-!}dL^-.
\end{equation*}
The key ingredient is Proposition \ref{prop_transfert_lemma}, which states that these operators act on the space of symmetric polynomials. This proposition follows from the biparticity of the dual of a directed ribbon graph. The factor $n^-!$ is to avoid over-counting. We prefer to use the functions 
\begin{equation*}
    K_{g,n^+,n^-}=\prod_iL_i^+ V_{g,n^+,n^-}(L^+|L^-).
\end{equation*}
 In this case, the operator $K_{g,n^+,n^-}$ has degree $4g-4+2n^++2n^-$, which is twice the opposite of the Euler characteristic. This allows us to consider the action of
\begin{equation*}
K_{n^+,n^-}=\sum_{g,n^+,n^-}q^{2g-2+n^+ +n^-}K_{g,n^+,n^-},
\end{equation*}
which defines an operator
\begin{equation*}
 K_{n^+,n^-}: \hat{S}_{n^-}(V)\longrightarrow \hat{S}_{n^+}(V),
\end{equation*}
where $\hat{S}_n(V)$ is the space of symmetric formal series in $n$ variables and coefficients in $\Q$. Our plan is to sum over $n^+,n^-$ and form a single operator. To deal with the fact that there is an arbitrary number of boundary components, we consider the action on the \textit{formal Bosonic Fock space}:
\begin{equation*}
\hat{S}(V)=\prod_n \hat{S}_n(V).
\end{equation*}
All the formalism is recalled in Section \ref{subsection_boson}. In the formal Fock space it makes sense to consider the operator
\begin{equation*}
 K^{\cf \s}=\sum_{g,n^+,n^-}q^{2g-2+n^++n^-}K_{g,n^+,n^-},
\end{equation*}
where we sum over stable surfaces, i.e., $2g-2+n^++n^->0$. The formal Fock space is endowed with the symmetric product that will be denoted $\sqcup$; by using duality, this product is well defined for operators on $\hat{S}(V)$. The non-connected operator satisfies the classical formula:
\begin{equation*}
    K^{\s}=\exp_{\sqcup}(K^{\cf\s}).
\end{equation*}
It seems almost crucial to add the cylinder of type $(0,1,1)$, we define $K_{0,1,1}: \hat{S}_{1}(V)\to \hat{S}_1(V)$ to be the identity. Finally, we will be interested in
\begin{equation*}
K=\exp_{\sqcup}(\sum_{g,n^+,n^-} q^{2g-2+n^+ +n^-} K_{g,n^+,n^-}),
\end{equation*}
where we include $(0,1,1)$ in the sum. As we will see, the operator $K$ can be written as
\begin{equation*}
\label{formula_K_id_intro}
K=K^{\s}\sqcup \exp_{\sqcup}(K_{0,1,1})=K^{\s}\sqcup \id.
\end{equation*}
It corresponds to adding an arbitrary number of cylinders. This allows much more gluing, and it leads to a simple expression of the recursion for the functions $K_{g,n^+,n^-}$. A choice of basis allows us to identify $\hat{S}(V)$ and $\Q[[\tb]]$, where $\tb=(t_0,t_1,...)$ is an infinite set of variables. Formula \ref{formula_K_id_intro}
implies that $K$ is a formal series written in terms of \textit{creation} and \textit{annihilation} operators, and then, it acts on $\Q[[\tb]]$ as a formal differential operator (see Proposition \ref{prop_unstable_diff_operators}).\\

\begin{rem}
\label{remark_intro_M}
    A different point of view is to consider the algebra $S(\M)$ generated by directed surfaces. We can defines $K_{\Mo}$ and obtain a linear map $\Kb: S(\M)\to \End(\hat{S}(V))$, moreover, $\sqcup$ is well defined on $\M$ and corresponds to the usual disjoint union of surfaces. In this setting the map $\Kb$ is a morphism of commutative algebra.
\end{rem}

\paragraph{Cut-and-Join equation}
The main result of this paper is to rewrite the recursion given in \cite{barazer2021cuttingorientableribbongraphs}. We consider the following \textit{Cut-and-Join} operator: 
\begin{equation}
\label{formula_P_intro}
   W_1=\frac{1}{2}\sum_{i,j}(i+1)(j+1)t_{i+1}t_{j+1}\partial_{i+j} +\frac{1}{2}\sum_{i,j}(i+j+2)t_{i+j+2}\partial_i \partial_j.
\end{equation}
Cut-and-Join operators appear at several places in enumerative geometry \cite{goulden1997transitive,alexandrov2011cutkontsevitch,kazarian2015virasoro} and since \cite{alexandrov2022cut} we know that the \textit{topological recursion} implies a Cut-and-Join representation of the partition function. In practice, the Cut-and-Join equation is an efficient way to perform computation.
\begin{thm}
    The operator $K$ satisfies the linear equation:
    \begin{equation*}
      \frac{\partial K}{\partial q} = W_1K,~~\text{and}~~K(0)=\id.
    \end{equation*}
\end{thm}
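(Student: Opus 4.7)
The plan is to differentiate $K$ with respect to $q$ and recognise the resulting expression as the action of $W_1$ on $K$, using the acyclic decomposition of \cite{barazer2021cuttingorientableribbongraphs} as the bridge. Differentiating the $\sqcup$-exponential definition of $K$ yields
$$\frac{\partial K}{\partial q} = \Big(\sum_{g,n^+,n^-}(2g-2+n^++n^-)\, q^{2g-3+n^++n^-}\, K_{g,n^+,n^-}\Big) \sqcup K,$$
where the term indexed by the cylinder $(0,1,1)$ vanishes because its Euler factor is zero. The initial condition $K(0)=\id$ is immediate from \eqref{formula_K_id_intro}: at $q=0$ only the cylinder survives and $\exp_{\sqcup}(K_{0,1,1})=\id$. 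Thus the whole theorem reduces to identifying the displayed sum with $W_1 K$.

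The crucial combinatorial fact is that, for a connected directed four-valent ribbon graph of type $(g,n^+,n^-)$, the handshake identity $4V=2E$ together with $V-E+F=2-2g$ (where $F=n^++n^-$) gives $V=2g-2+n^++n^-$. Hence $\partial_q K$ can be read as a sum over \emph{pointed} directed ribbon graphs, with a distinguished vertex $v$, weighted by the usual volume. On the other side, via the identification $\hat{S}(V)\simeq\Q[[\tb]]$ of Proposition \ref{prop_transfert_lemma}, the two summands of $W_1$ have transparent geometric meaning: $\tfrac12\sum(i+1)(j+1)t_{i+1}t_{j+1}\partial_{i+j}$ removes one boundary of type $i+j$ and creates two of types $i+1,j+1$ (the \emph{cut}), while $\tfrac12\sum(i+j+2)t_{i+j+2}\partial_i\partial_j$ removes two boundaries of types $i,j$ and creates one of type $i+j+2$ (the \emph{join}). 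Applying $W_1$ after $K$ is therefore equivalent to performing one cut-or-join move on the output of $K$.

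To match the two pictures I would invoke the acyclic decomposition: resolving a graph at a chosen vertex $v$ produces a graph with one fewer vertex, and this resolution is either a cut (when the two incoming half-edges at $v$ lie on the same boundary) or a join (when they lie on different boundaries), and symmetrically for the outgoing pair of half-edges — this produces the four contributions and the global factor $\tfrac12$. The numerical coefficients $(i+1)(j+1)$ and $(i+j+2)$ emerge as the Jacobians of the piecewise-affine change of length variables when the resolution is translated into the $\tb$-basis. It is essential that $W_1$ act on the full $K$ rather than on its connected part, because the resolution at $v$ may disconnect the graph: the disconnected pieces are then reassembled by the $\sqcup$-factor on the right, with the arbitrary number of cylinders built into $K$ absorbing any dangling boundaries produced by the move.

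The main obstacle is the careful bookkeeping of the four half-edge pairings at $v$ together with their $\pm$-orientation labels: one must verify that the symmetry factor $1/2$, the length Jacobians $(i+1)(j+1)$ and $(i+j+2)$, the $1/n^-!$ prefactor in the definition of $K_{g,n^+,n^-}$, and the automorphisms of the resolved directed ribbon graphs combine in a consistent way. Once this translation is in place, the equality $\partial_q K = W_1 K$ is obtained term by term from the recursion of \cite{barazer2021cuttingorientableribbongraphs}, after summing over $(g,n^+,n^-)$ and over the vertex $v$.
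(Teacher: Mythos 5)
Your proposal follows the paper's first proof of Theorem \ref{thm_cut_and_join}: differentiate the $\sqcup$-exponential so that the Euler-characteristic factor $2g-2+n^++n^-$ (the number of four-valent vertices) appears, and then match the resulting sum with $W_1K$ via the vertex-surgery recursion of Theorem \ref{thm_rec_quad_K}. The one point to flag is that what you defer as ``careful bookkeeping'' is in fact the substance of that proof: one must merge the $K_{g,n^+,n^--1}$ term of the recursion into the disconnected sum using $K_{0,1,1}=1$, and prove that composing $P_-=K_{0,1,2}\sqcup\id$ with $K=\exp_\sqcup(K^{\cf})$ produces exactly the connected term plus the quadratic contraction term (the operator $C$ and Lemma \ref{lem_action_P}), so as written your argument is a correct plan whose decisive verification is still outstanding; also, the coefficients $(i+1)(j+1)$ and $(i+j+2)$ arise from the normalization of the basis $e_\alpha(L)=L^{\alpha}/\alpha!$ under the kernels $K_{0,2,1}$ and $K_{0,1,2}$ rather than as Jacobians of a change of length variables.
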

Compared to the recursion for the volumes this formulation simplifies dramatically. Nevertheless, it is hard to go backward and obtain information on the Kernels. We give two different proofs: the first ones is straightforward and is obtained after rewriting the recursion in Theorem \ref{thm_rec_quad_K}. The second is more conceptual and uses the acyclic decomposition; it is a correspondence between directed ribbon graphs and \textit{acyclic stable graphs}. Generalizing Remark \ref{remark_intro_M}, we can form the space $S(\A)$ generated by acyclic stable graphs, for each stable graph $\Go$, we can define a kernel $K_{\Go}$. There is a way to compose acyclic stable graphs, and this gives rise to a  non-commutative product $\cdot$ on $S(\A)$, moreover, $\Kb$ extends to $S(\A)$ and $\Kb(x\cdot y)=\Kb(x)\circ\Kb(y)$. In this algebra, the exponential $\exp(P)$ of the two pairs of pants of type $(0,1,2)$ and $(0,2,1)$ corresponds to all the ways to glue pairs of pants together, and then, it is a generating series for acyclic stable graphs:
\begin{equation}
\label{formula_exp_A_intro}
    \exp(qW_1)=\sum_{\Go}\frac{q^{d(\Go)}n(\Go)}{\#\Aut(\Go)}\eb_{\Go}.
\end{equation}
Where we sum over all the possible acyclic stable graphs with components of type $(0,1,2)$ or $(0,2,1)$, $d(\Go)$ is the opposite of the Euler characteristic and $n(\Go)$ is the number of linear orders (see Proposition \ref{prop_exp_A}). Using the fact that $\Kb$ is a morphism of algebras, we can apply it to the both sides of Formula \ref{formula_exp_A_intro}; using the acyclic decomposition, the RHS is $K(q)$ and the LHS is equal to the operator given in Formula \ref{formula_P_intro}

\paragraph{Partition function}
It is quite natural to consider the partition function
\begin{equation*}
    Z(q,\tb)=K(q)\ez.
\end{equation*}
Where the \textit{vacuum} is given by $\ez=\exp(t_0)$. It appears that this series is a generating series for directed ribbon graphs or, equivalently, \textit{Grothendieck Dessins d'enfants} (see \cite{lando2004graphs} for instance), and is thus of combinatorial interest. A dessin d'enfant is a covering of the sphere ramified over three points $(x^+,x_0,x^-)$. The polynomials:
\begin{equation*}
    Z_{g,n^+,n^-}(L^+)=\int_{L^+} K_{g,n^+,n^-}(L^+|L^-)\frac{dL^-}{n^-!},
\end{equation*}
can be written as
\begin{equation*}
     Z_{g,n^+,n^-}(L^+)=\sum_{\alphab^+} \frac{(L^+)^{\alphab^+}}{\alphab^+!} h_{g,n^+,n^-}(\alphab^+).
\end{equation*}
The coefficients $ h_{g,n^+,n^-}(\alphab^+)$ count the number of Grothendieck dessins d'enfants with labeled ramifications over $x^+$ given by $\alphab^+=(\alpha_1^+,...,\alpha_{n^+}^+)$, simple unlabeled ramifications over $x_0$ and $n^+$ arbitrary unlabeled ramifications over $x^-$. It can be seen as an element of $\Q[\tb]$ and the partition function is given by
\begin{equation*}
    Z=\exp(\sum_{g,n^+,n^-} q^{2g-2+n^++n^-}Z_{g,n^+,n^-}).
\end{equation*}
Then, it is a generating series for a certain type of non-connected dessins d'enfants. From the definition of $Z$ we have the following corollary.
\begin{cor}
    The series $Z$ is the solution of the Cut-and-Join equation
    \begin{equation*}
        \frac{\partial Z}{\partial q}=W_1Z,~~\text{and}~~Z(0,\tb)=\exp(t_0).
    \end{equation*}
\end{cor}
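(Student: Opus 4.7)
My plan is to derive the corollary directly from the theorem by specializing the operator identity to the vacuum vector $\ez=\exp(t_0)\in\hat S(V)\cong\Q[[\tb]]$. The first step is to note that $\ez$ is independent of $q$, so termwise differentiation gives
\begin{equation*}
\frac{\partial Z}{\partial q} \;=\; \frac{\partial\bigl(K(q)\ez\bigr)}{\partial q} \;=\; \Bigl(\frac{\partial K}{\partial q}\Bigr)\ez.
\end{equation*}
Invoking the theorem, the right-hand side equals $(W_1K)\ez$. Since $W_1$ is a formal differential operator on $\Q[[\tb]]$ whose coefficients do not depend on $q$, the associativity of operator composition on the formal Fock space yields $(W_1 K)\ez = W_1(K\ez) = W_1 Z$, which is exactly the evolution equation claimed.

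For the initial condition, I would use the fact that $K(q)$ is defined by the exponential formula $K = \exp_{\sqcup}(\sum_{g,n^+,n^-} q^{2g-2+n^++n^-}K_{g,n^+,n^-})$ including the cylinder term $K_{0,1,1}=\id$ with $q$-weight $q^{0} = 1$; at $q=0$ only the stable terms vanish while the cylinder contribution collapses to $K(0) = \exp_{\sqcup}(K_{0,1,1})=\id$, as already recorded in the theorem. Evaluating at $\ez$ then gives $Z(0,\tb) = \id(\ez) = \exp(t_0)$.

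The main \emph{potential} obstacle in such a specialization is the justification that operator composition, the $q$-derivative, and evaluation at a fixed vector all interact as expected on $\hat S(V)$; but the operator identity of the theorem is already stated in this formal Fock-space setting, so no genuine analytic or convergence issue arises. In short, the corollary is a one-line consequence of the theorem once the vacuum $\ez$ is fed into both sides of the operator ODE, and I expect the proof to require no calculation beyond the two observations above.
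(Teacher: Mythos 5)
Your proposal is correct and follows essentially the same route as the paper: the paper's proof is precisely "apply the operator Cut-and-Join equation $\partial_q K = W_1 K$ to the vacuum $\ez$," and your verification of the initial condition via $K(0)=\exp_{\sqcup}(K_{0,1,1})=\id$ matches Lemma \ref{lem_exp_cyl_id}. No gaps.
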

It is also possible to obtain other classes of dessins d'enfants by considering different initial conditions but in this paper we restrict ourselves to the case where ramifications over $x_0$ are simple and we cannot control the ramification over $x^-$. The Cut-and-Join equation for this family of dessins d'enfants is not new, and we retrieve results of \cite{kazarian2015virasoro}.

\paragraph{Relation to topological recursion:}
It is natural to ask what is the relation with E.O topological recursion \cite{eynard2007invariants}. We consider $W_{g,n^+,n^-}$ the Laplace transform of $Z_{g,n^+,n^-}$ and defines
\begin{equation*}
    W_{g,n}=\sum_{n^-}W_{g,n,n^-}.
\end{equation*}
We include terms of types $(0,1)$ and $(0,2)$. 
\begin{prop}
\label{prop_loop_intro}
The series satisfies the loop equation:
    \begin{eqnarray*}
     x_1 W_{g,n} &=& \sum_{i\neq 1} \frac{\partial}{\partial x_i}\left( \frac{W_{g,n-1}(x_1,x_{\{1,i\}^c})-W_{g,n-1}(x_i,x_{\{1,i\}^c})}{x_1-x_i}\right)\\
&+&W_{g-1,n+1}(x_1,x_1,x_{\{1\}^c})+ \sum_{I_1,I_2,g_1+g_2=g} W_{g_1-1,n_1+1}(x_1,x_{I_1})W_{g_2-1,n_2+1}(x_1,x_{I_2})+\delta_{g,0}\delta_{n,1}.
    \end{eqnarray*}
\end{prop}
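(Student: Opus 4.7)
The plan is to derive the loop equation from the Cut-and-Join equation $\partial_q Z = W_1 Z$ (the corollary above) by taking the logarithm and then applying the Laplace transform in the $L^+$ variables. This follows the standard route from a Virasoro-type constraint on a partition function to an Eynard--Orantin-style loop equation on its correlators.

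First, I pass from $Z$ to the free energy $F = \log Z$. Decomposing $W_1 = A + B$ into the first-order join part $A = \frac{1}{2}\sum(i+1)(j+1)t_{i+1}t_{j+1}\partial_{i+j}$ and the second-order cut part $B = \frac{1}{2}\sum(i+j+2)t_{i+j+2}\partial_i\partial_j$, the substitution $Z = e^F$ converts the Cut-and-Join equation into
\begin{equation*}
\partial_q F = AF + BF + \frac{1}{2}\sum_{i,j}(i+j+2)\,t_{i+j+2}\,\partial_iF\cdot\partial_jF,
\end{equation*}
the quadratic term being the Wick correction produced when $B$ passes through the exponential. Writing $F = t_0 + \sum_{g,n^+,n^-}q^{2g-2+n^++n^-}Z_{g,n^+,n^-}$, I will extract the coefficient at a fixed $(g,n^+)$, sum over $n^-$, and apply the Laplace transform in the $L^+$ variables to produce the claimed equation on $W_{g,n}$.

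Term by term after the Laplace transform, the quadratic cut-part $t_{i+j+2}\partial_iF\cdot\partial_jF$ yields the convolution $\sum_{g_1+g_2=g,\,I_1\sqcup I_2}W_{g_1,|I_1|+1}(x_1,x_{I_1})\,W_{g_2,|I_2|+1}(x_1,x_{I_2})$; the linear cut-part $t_{i+j+2}\partial_i\partial_jF$ gives the self-gluing $W_{g-1,n+1}(x_1,x_1,x_{\{1\}^c})$; the join-part $A$ together with $\partial_q F$ on the LHS assembles into the $x_1 W_{g,n}$ term and the divided-difference sum $\sum_{i\neq 1}\partial_{x_i}\bigl[(W_{g,n-1}(x_1,x_{\{1,i\}^c})-W_{g,n-1}(x_i,x_{\{1,i\}^c}))/(x_1-x_i)\bigr]$; and the initial condition $Z(0,\tb)=e^{t_0}$ produces the inhomogeneous $\delta_{g,0}\delta_{n,1}$ term as the Laplace transform of the disk contribution. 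The unstable strata $(0,1,n^-)$ and $(0,2,n^-)$, which appear in $W_{g,n}$ by definition but are absent from the stable part of $F$, are handled through the $t_0$ initial condition and the identity cylinder $K_{0,1,1}=\id$ respectively. The main technical obstacle is the packaging of the join-part output into the divided-difference form on the RHS: the operator $(i+1)(j+1)t_{i+1}t_{j+1}\partial_{i+j}$ produces, on a monomial in $L^+$, a double sum over pairs of new exponents whose Laplace image must be reorganized via a partial-fraction identity in $(x_1,x_i)$ into the rational kernel $1/(x_1-x_i)$ with a single derivative $\partial_{x_i}$. Carrying out this reorganization while keeping track of the combinatorial factors $(i+1)(j+1)$ and the symmetrization over the positive boundaries is where the bulk of the computation resides; once it is verified, the remaining matching of terms is routine.
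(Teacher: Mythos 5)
Your starting point is too weak: the Cut-and-Join equation $\partial_q Z = W_1 Z$ is the symmetrized version of the information you need, while the loop equation is pinned at the distinguished variable $x_1$. Concretely, extracting the coefficient of $q^{d}$ with $d=2g-2+n^++n^-$ from your connected equation $\partial_q F = AF+BF+\sum t_{i+j+2}\partial_iF\,\partial_jF/\ldots$ gives $d\cdot Z_{g,n^+,n^-} = \cdots$, and since $\sum_i\alpha_i^+(\Ro) = 2d_{g,n^+,n^-}$ the left-hand side is (half of) a sum over \emph{all} positive boundaries $i$ of a weight $\alpha_i$; after Laplace transform this becomes a sum over $i$ of operations in $x_i$, not the single pinned term $x_1W_{g,n}$. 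What your extraction can produce is $\sum_i E_i=0$, where $E_i$ denotes the loop equation pinned at the $i$-th variable, and this does not imply $E_1=0$. Equivalently: the loop equation is equivalent to the full tower of Virasoro constraints $L_iZ=0$, $i\ge -1$ (Proposition \ref{prop_virasoro_alg}), whereas the Cut-and-Join equation is the single linear combination $\sum_i i\,t_i L_{i-2}Z=0$ of them; no term-by-term unpacking of one equation recovers the whole tower.

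The paper accordingly does not route the proof through the Cut-and-Join equation. It first establishes a \emph{refined} recursion pinned at the first boundary (Lemma \ref{lem_refined_recursion}), whose left-hand side is $\sum_{\Ro}\alpha_1^+(\Ro)K_{\Ro}/\#\Aut(\Ro)$; this is obtained geometrically by marking a quadrivalent vertex on $\beta_1^+$ and applying the vertex surgery theorem of the companion paper, and it implies Theorem \ref{thm_rec_quad_K} (hence Cut-and-Join) upon summing over $i$, but not conversely. Integrating against the vacuum then gives the pinned integral recursion $\partial_1 Z_n^d=\cdots$ (Proposition \ref{prop_rec_G_3}), and the loop equation is its Laplace transform. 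Your outline of the Laplace-transform bookkeeping (divided differences from the join part, self-gluing and convolution from the cut part, $\delta_{g,0}\delta_{n,1}$ from the disc) is the right picture for that final step, but it must be fed the pinned recursion, which requires a separate geometric input that your argument does not supply.
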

This equation is classical, in particular, the \textit{disc} amplitude is given by:
\begin{equation*}
    xW_{0,1}(x)=W_{0,1}(x)^2+1.
\end{equation*}
By using argument that are standard in the TR community and developed in \cite{Bertrand_Eynard_2004_hermetian_tr,eynard2007invariants}  we obtain that:
\begin{cor}
    The $W_{g,n}$ can be computed by the topological recursion using the spectral curve $xy=y^2+1$.
\end{cor}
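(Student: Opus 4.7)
The plan is to apply the standard Eynard--Orantin machinery, developed in \cite{Bertrand_Eynard_2004_hermetian_tr, eynard2007invariants}, which converts a loop equation of the form given in Proposition~\ref{prop_loop_intro} into the topological recursion on a suitable spectral curve. First, I identify the spectral curve. The disc amplitude relation $xW_{0,1}(x) = W_{0,1}(x)^2 + 1$ is exactly the affine equation $xy = y^2 + 1$ under the substitution $y = W_{0,1}(x)$. This rational curve admits the global uniformization $x(z) = z + z^{-1}$, $y(z) = z$, and $dx = (1 - z^{-2})\, dz$ has simple zeros at $z = \pm 1$, giving two ramification points at $x = \pm 2$ with local Galois involution $\sigma(z) = 1/z$.

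Next, I promote the series to multi-differentials $\omega_{g,n}(z_1, \dots, z_n) = W_{g,n}(x_1, \dots, x_n)\, dx_1 \cdots dx_n$, with $\omega_{0,2}$ replaced by the Bergman kernel of the curve. The expression $\partial_{x_i}\bigl((W_{g,n-1}(x_1, \dots) - W_{g,n-1}(x_i, \dots))/(x_1 - x_i)\bigr)\, dx_1\, dx_i$ appearing in Proposition~\ref{prop_loop_intro} is, up to a regular term, the contraction of $\omega_{0,2}(z_1, z_i)$ with $\omega_{g,n-1}$. Multiplying the loop equation by $dx_1$ and rearranging yields
\begin{equation*}
y(z_1)\, \omega_{g,n}(z_1, z_I) = Q_{g,n}(z_1, z_I) + P_{g,n}(x_1, z_I)\, dx_1,
\end{equation*}
where $Q_{g,n}$ collects the splitting and genus-reduction quadratic contributions and $P_{g,n}$ is rational in $x_1$. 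Symmetrizing $z_1 \leftrightarrow \sigma(z_1)$ and taking half-sum and half-difference delivers the linear loop equation (regularity of $\omega_{g,n}(z_1, \dots) + \omega_{g,n}(\sigma z_1, \dots)$ at $z_1 = \pm 1$) together with the quadratic loop equation bounding the order of the pole of the quadratic combination at the ramification points. The inhomogeneous $\delta_{g,0}\delta_{n,1}$ term and the inclusion of the unstable $(0,1), (0,2)$ contributions in $W_{g,n}$ are exactly what is needed for these abstract identities to hold with the Bergman kernel as $\omega_{0,2}$.

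Once the two abstract loop equations are in place, the classical argument of \cite{eynard2007invariants} expresses, for $2g - 2 + n > 0$,
\begin{equation*}
\omega_{g,n}(z_0, z_I) = \sum_{\beta = \pm 1} \Res_{z = \beta} K(z_0, z) \Bigl[ \omega_{g-1, n+1}(z, \sigma z, z_I) + {\sum}'\, \omega_{g_1, |I_1|+1}(z, z_{I_1})\, \omega_{g_2, |I_2|+1}(\sigma z, z_{I_2}) \Bigr],
\end{equation*}
where $K$ is the recursion kernel built from $y\, dx$ and the Bergman kernel, and the primed sum is over stable $(g_i, I_i)$. The main obstacle lies in the second step: one must carefully unpack the derivatives $\partial_{x_i}$ to match precisely the Bergman-kernel convention, and verify that the only poles of the right-hand side in $x_1$ lie at the ramification points $x_1 = \pm 2$, so that residue extraction there is legitimate.
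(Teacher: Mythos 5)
Your proposal follows essentially the same route as the paper, which itself does not carry out the details but invokes the standard Eynard--Orantin argument (Zhukovsky uniformization $x=z+z^{-1}$, Bergman kernel, linear and quadratic loop equations, residue extraction at $z=\pm 1$) and refers to \cite{eynard2016counting} for the proof of Theorem \ref{thm_tr_generic}. The only slip is the branch choice $y(z)=z$: the asymptotics $W_{0,1}\sim 1/x$ near $x=\infty$ force $y(z)=1/z$ as in the paper, though since only the anti-invariant part $y-\sigma^{*}y$ enters the recursion kernel this does not affect the argument.
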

Proposition \ref{prop_loop_intro}  also implies the following system of equations, which is called Virasoro constraints (or Witt constraints). Let $L_i$ the operator defined by
\begin{equation*}
 L_i= \partial_{i+2} - \sum_{j}(j+1) t_{j+1}\partial_{i+j+1}-\sum_{k+l=i} \partial_k\partial_l,
\end{equation*}
for $i\ge -1$.
\begin{cor}
    The operators $L_i$ form a representation of the Witt algebra:
    \begin{equation*}
        [L_i,L_j]=(i-j)L_{i+j}~~~~\forall i,j\ge -1.
    \end{equation*}
    And the series satisfies
    \begin{equation*}
        L_iZ=0~~~~~\forall i\ge -1
    \end{equation*}
\end{cor}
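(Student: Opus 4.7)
The corollary has two independent claims: the Witt commutation relation $[L_i,L_j]=(i-j)L_{i+j}$, and the annihilation $L_iZ=0$. I would prove the first by a direct bracket computation in the Heisenberg algebra, and the second by translating the loop equation of Proposition \ref{prop_loop_intro} coefficient by coefficient via the Laplace-transform dictionary between the $W_{g,n}$ (in $x$-variables) and the derivatives of $F=\log Z$ (in $t$-variables).

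For the Witt identity, write
\begin{equation*}
L_i=A_i+B_i+C_i,\qquad A_i=\partial_{i+2},\qquad B_i=-\sum_{n\geq 1} n\,t_n\,\partial_{i+n},\qquad C_i=-\sum_{k+l=i}\partial_k\partial_l,
\end{equation*}
after reindexing the linear part by $n=j+1$. Using only $[\partial_m,t_n]=\delta_{m,n}$ the cross-brackets collapse: $[A_i,A_j]=0$ and $[A_i,C_j]=0$, while $[A_i,B_j]+[B_i,A_j]$, $[B_i,B_j]$, and $[B_i,C_j]+[C_i,B_j]$ each produce a scalar multiple of $A_{i+j}$, $B_{i+j}$, and $C_{i+j}$ respectively. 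Collecting the three contributions reproduces $(i-j)L_{i+j}$. The only delicate point is the boundary case $i=-1$, where indices such as $i+n$ can touch $0$; but the combinatorial prefactor $n(i+n)$ then vanishes, so no spurious term is generated and the identity holds uniformly for all $i,j\geq-1$.

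For $L_iZ=0$, I would use the dictionary $\partial_k\leftrightarrow x^{-k-1}$ under which $W_{g,n}(x_1,\ldots,x_n)$ encodes the coefficients $\partial_{k_1}\cdots\partial_{k_n}F$. Extracting the coefficient of $x_1^{-i-2}$ in the loop equation of Proposition \ref{prop_loop_intro} and summing over $g,n$ with the appropriate weight in $q$, each term converts as follows: the disc contribution $\delta_{g,0}\delta_{n,1}$ yields the $\partial_{i+2}F$ piece; expanding $(x_1-x_i)^{-1}$ as a geometric series in $x_1^{-1}$ produces $\sum_{n\geq 1}n\,t_n\,\partial_{i+n}F$; and the combination of $W_{g-1,n+1}(x_1,x_1,\ldots)$ with $\sum W_{g_1}W_{g_2}$ reconstructs $\partial_k\partial_lF+\partial_kF\,\partial_lF=\partial_k\partial_lZ/Z$ for $k+l=i$. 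Assembling these, the loop equation becomes exactly $L_iZ/Z=0$. This is the standard Eynard--Orantin derivation of Virasoro constraints from topological recursion, as in \cite{eynard2007invariants,Bertrand_Eynard_2004_hermetian_tr}.

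The main obstacle is the Laplace dictionary itself: one must verify that the geometric-series expansion of the $(x_1-x_i)^{-1}$ terms matches the $t_n$-dependent part of $L_i$ with the correct combinatorial factors, and that the normalization of $W_{0,1}$ associated with the spectral curve $xy=y^2+1$ is consistent with the affine shift $\partial_{i+2}$ in $L_i$. Once this is set up, both halves of the corollary follow by routine algebraic manipulations; the Witt bracket computation is mechanical but requires care at the boundary indices, and the Virasoro annihilation is a direct repackaging of the loop equation.
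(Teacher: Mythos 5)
Your two-part strategy is sound, and the Witt bracket computation is essentially what the paper does: Proposition \ref{prop_virasoro_alg} disposes of the commutation relation as a direct computation, and your decomposition $L_i=A_i+B_i+C_i$ is the standard way to organize it. For the annihilation $L_iZ=0$, however, you take a genuinely different route. You start from the \emph{connected} loop equation of Proposition \ref{prop_loop_eq}, extract the coefficient of $x_1^{-i-2}$, and pass through $F=\log Z$, using $\partial_k\partial_lF+\partial_kF\,\partial_lF=\partial_k\partial_lZ/Z$ to recombine the handle-gluing and splitting terms. The paper never leaves the coefficient level: it takes the Tutte recursion of Proposition \ref{prop_R(alpha)_tute}, passes to its \emph{disconnected} version (where the $W_{g-1,n+1}$ term and the quadratic sum merge automatically into the single term $\sum_{k+l=\alpha_1-2}\Rt^{\circ,d-1}_{n+1}(k,l,\cdot)$), rewrites it as a recursion indexed by partitions $\mu$ (Formula \ref{formula_R(mu)_tute}), and reads $L_iZ=0$ directly off $Z=\sum_\mu \Rt^{\circ}(\mu)\tb^\mu/\mu!$. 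The paper's route avoids both the Laplace dictionary and the logarithm; yours buys the standard topological-recursion framing at the cost of having to set that dictionary up carefully, which you correctly identify as the main burden.

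Two details in your sketch need repair, though neither is fatal. First, the $\partial_{i+2}$ term of $L_i$ does not come from the disc contribution $\delta_{g,0}\delta_{n,1}$: under the dictionary $\partial_k\leftrightarrow x^{-k-1}$ that term contributes only to the coefficient of $x_1^{0}$, i.e.\ to the excluded index $i=-2$, where it serves to fix $W_{0,1}$. The $\partial_{i+2}$ piece comes from the left-hand side $x_1W_{g,n}$ (the multiplication by $x_1$ shifts the exponent by one), together with the unstable $W_{0,1}W_{g,n}$ contributions hidden in the quadratic sum, as in the paper's Remark on unstable terms. Second, watch the global sign in the Witt relation: with your convention $L_i=A_i+B_i+C_i$, $A_i=\partial_{i+2}$, one finds $[A_i,B_j]+[B_i,A_j]=(j-i)A_{i+j}$ and $[B_i,B_j]=(j-i)B_{i+j}$, so the bracket closes as $(j-i)L_{i+j}$; the stated form $(i-j)L_{i+j}$ holds for the opposite sign convention used in Proposition \ref{prop_virasoro_alg}. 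This is a pure convention issue (the paper itself switches sign between the introduction and the body), but your claim that only the boundary case $i=-1$ is delicate understates where the care is actually needed.
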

Similarly to the Cut-and-Join equation, the Virasoro constraints can be found in \cite{kazarian2015virasoro}.

\paragraph{Case of bivalent vertices}
In Section \ref{section_marked} we generalize the content of Sections \ref{section_operator_volumes}, \ref{section_cut_and_join_operator} and \ref{section_partition_function}  to the case of ribbon graphs with vertices of degree two. Although this case is relatively simple, it is the starting point of a forthcoming paper. We can consider $K_{g,n^+,^-,m}$ the volumes associated to moduli spaces of directed graphs with $2g-2+n^++n^-$ vertices of degree four and $m$ vertices of degree two. We consider:
\begin{equation*}
    K(q_0,q_1)=\exp_{\sqcup}(\sum_{g,n^+,n^-,m}q_0^m q_1^{2g-2+n^++n^-}K_{g,n^+,n^-,m}).
\end{equation*}
Specializing at $q_0=0$, we recover the precedent operator. We introduce the operator 
\begin{equation*}
    W_0=\sum_i (i+1)t_{i+1}\partial_i,
\end{equation*}
 We obtain the following Theorem:
\begin{thm}
    The two operators $W_0,W_1$ commute and:
    \begin{equation*}
        \frac{\partial K}{\partial q_i} = W_iK,~~\text{and}~~K(0)=id.
    \end{equation*}
\end{thm}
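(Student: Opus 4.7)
Four things must be checked: the initial condition $K(0,0)=\id$, the two evolution equations $\partial_{q_1}K=W_1K$ and $\partial_{q_0}K=W_0K$, and the commutation $[W_0,W_1]=0$. The initial condition is immediate from the definition: at $q_0=q_1=0$ only the cylinder $K_{0,1,1}$ survives, which is the identity, and $\exp_{\sqcup}(\id)=\id$ on the full Fock space.

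The equation $\partial_{q_1}K=W_1K$ is the main theorem of the paper extended to the marked setting. I would re-run the acyclic decomposition of Sections~\ref{section_operator_volumes}--\ref{section_cut_and_join_operator} in the presence of bivalent vertices, observing that a bivalent vertex is nothing but a subdivision of an edge and is therefore inert under the cut-and-join operation on the tetravalent skeleton. Concretely, I would perform the acyclic decomposition on the tetravalent part while treating the bivalent vertices as passive markings carried along; then the same morphism of algebras $\Kb$ and the generating identity $\exp(qW_1)=\sum_{\Go} q^{d(\Go)}n(\Go)/\#\Aut(\Go)\,\eb_{\Go}$ applies term-by-term in the $q_0^m$ coefficient.

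For $\partial_{q_0}K=W_0K$, the argument is combinatorial and new: differentiating in $q_0$ extracts a distinguished bivalent vertex, and one must show that summing over all ways of inserting a bivalent vertex into a directed ribbon graph with the prescribed boundary lengths produces precisely $W_0K$. The factor $(i+1)$ in $W_0=\sum_i(i+1)t_{i+1}\partial_i$ matches the fact that inserting a bivalent vertex on a boundary arc of combinatorial length $i$ gives $i+1$ possible positions, and the index shift $\partial_i\mapsto t_{i+1}$ encodes the increase of the corresponding boundary-length power by one after integration. Making the bijection between marked insertions and monomials $t_{i+1}\partial_i$ precise, with the right volume factors, is the main step.

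Finally, the commutation $[W_0,W_1]=0$ I would verify by direct bosonic calculation, using $[\partial_i,t_j]=\delta_{ij}$, which gives $[W_0,t_c]=(c+1)t_{c+1}$ and $[W_0,\partial_c]=-c\,\partial_{c-1}$. Splitting $W_1=W_1^{(1)}+W_1^{(2)}$ into its $tt\partial$ and $t\partial\partial$ parts (which have different net Fock degrees and so cannot mix), the commutator $[W_0,W_1^{(1)}]$ collapses via the identity $a(a-1)b+ab(b-1)=ab(a+b-2)$, and $[W_0,W_1^{(2)}]$ collapses via $(i+j+2)(i+j+3)-(i+1)(i+j+3)-(j+1)(i+j+3)=0$; both pieces vanish, and one also checks that the summation ranges match at the boundary indices $a=1$ or $i=0$. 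The hard step is the combinatorial $q_0$-equation; the $q_1$-equation reuses the previous sections almost verbatim, and the commutator check is a finite algebraic verification.
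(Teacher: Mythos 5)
Your plan is correct in outline and would get there, but it is considerably more laborious than what the paper actually does, and the two arguments are organized quite differently. The paper's entire proof rests on one identity imported from the earlier work: removing a bivalent vertex gives $m\,K_{g,n^+,n^-,m}=E\,K_{g,n^+,n^-,m-1}$, where $E$ is multiplication by the total boundary length $|L^+|=|L^-|$. This immediately yields $K_{g,n^+,n^-,m}=\tfrac{E^m}{m!}K_{g,n^+,n^-}$, hence $\Kbull(q_0,q_1)=\exp(q_0W_0)K(0,q_1)$ with $W_0=K_{0,1,1,1}\sqcup\id$ being exactly multiplication by $E$ (its expression $\sum_i(i+1)t_{i+1}\partial_i$ then follows from Proposition \ref{prop_unstable_diff_operators}). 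The $q_0$-equation is then a one-line consequence, the $q_1$-equation follows from the unmarked Theorem \ref{thm_cut_and_join} \emph{without} re-running the acyclic decomposition with passive markings, and the commutation $[W_0,W_1]=0$ is conceptual rather than computational: every kernel in sight is supported on $\{|L^+|=|L^-|\}$, so multiplication by the total boundary length commutes with all of them. Your direct bosonic verification of $[W_0,W_1]=0$ is a legitimate substitute (the identities you quote do close the computation), but it hides the reason the commutator vanishes; your re-derivation of the $q_1$-equation is correct but redundant once the factorization $\Kbull=\exp(q_0W_0)K$ is in hand.

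One point in your $q_0$-argument needs correcting rather than just fleshing out. You attribute the coefficient $(i+1)$ in $W_0$ to the $i+1$ discrete positions for inserting a bivalent vertex on a boundary arc of combinatorial length $i$. That is the right heuristic for the integer-metric count (Lemma \ref{lem_P_R}), but the operators $K_{g,n^+,n^-,m}$ are volumes of real moduli spaces: the marked bivalent vertex sweeps the metric graph continuously, and the total measure of its positions is the total edge length, which equals $|L^+|$. The factor $(i+1)$ is then purely an artifact of the normalized basis $e_\alpha(L)=L^\alpha/\alpha!$, since $L\cdot\tfrac{L^{\alpha}}{\alpha!}=(\alpha+1)\tfrac{L^{\alpha+1}}{(\alpha+1)!}$. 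If you set up the "insertion" argument as a discrete position count you will not land on the volume identity $m\,K_m=E\,K_{m-1}$; you should instead integrate the position of the distinguished bivalent vertex over the graph and divide by $m$ for the forgetful map.
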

This is what is called $W-$representation, or at least the first two equations \cite{Mironov_2011,Wang_2022}. We can also consider the partition function; in this case, it counts the ribbon graphs with bivalent vertices or, equivalently, the ribbon graphs with integral metrics on the edges, but, we do not control the length of the negative boundary components.

\paragraph{Plan of the paper}
\begin{enumerate}
    \item In Section \ref{section_background} we recall some tools useful to the following section. Subsection \ref{paragraph_surface_background} is devoted to acyclic stable graphs. In subsection \ref{subsection_boson} we give a self-contained introduction to symmetric algebra and operators on it. We also introduce the algebra generated by acyclic stable graphs and write the formula that relates exponentials in this algebra to generating series.
    \item  In Section \ref{section_operator_volumes} we construct the operators; we start by recalling results on moduli space of directed stable graphs and prove Proposition \ref{prop_transfert_lemma} which is the starting point of our investigation. We study elementary properties of these operators.
    \item In Section \ref{section_cut_and_join_operator} we give two proofs of the Cut-and-Join equation.
    \item In Section \ref{section_partition_function} we study the partition function and find the combinatorial interpretation for the coefficients. We derive the Cut-and-Join equation and also study the case $n^-=1$.
    \item In Section \ref{section_TR_oriented} we obtain the Loop equation, these results can also be found in \cite{eynard2016counting} in a different way. 
    \item Finally, in section \ref{section_marked} we study the case of ribbon graphs with bivalent vertices and give several interpretations for the partition function.
    \item We include two appendices. In \ref{subsection_dual} we present a natural scalar product such that $V_{g,n^-,n^+}$ is the adjoint of $V_{g,n^+,n^-}$. In subsection \ref{subsection_norbury} we  give a relation between usual ribbon graphs and ribbon graphs with metrics on the edges that were studied by P. Norbury \cite{norbury2008counting}. We found a \textit{combinatorial} change of variable that gives a relation between the two generating functions and explains why these two different combinatorial problems share the same spectral curve.
\end{enumerate}

\paragraph{Acknowledgment:} 
The author warmly thanks numerous colleagues and friends for their support throughout this work. First and foremost, I am deeply grateful to my advisors, M. Kontsevich and A. Zorich, for their guidance and support during my PhD, and to the IHÉS for its hospitality. I also thank M. Liu, E. Goujard, V. Delecroix, and Y. Yakovlev for their valuable input and interest in this work.

\tableofcontents 
\section{General background and notations}
\label{section_background}

\subsection{Topological background}
\paragraph{Directed surfaces:}
\label{paragraph_surface_background}
Let $M$ be a compact, oriented, topological surfaces with a nonempty boundary. By abuse of notation, we denote $\partial M$ the set its boundary components, i.e., $\pi_0(\partial M)$. A \text{direction} on $M$ is a non constant function, $\epsilon : \partial M \longrightarrow \{\pm 1\}$ and the pair $\Mo=(M,\epsilon)$ is a \textit{directed surface}.\\
We often assume that $\Mo$ is stable, and denote by $\bordo$ the set of directed stable surfaces up to homeomorphisms. The subset of connected surfaces is denoted $\bordoc$. A direction divides $\partial M$ into two non-empty sets $\partial^\pm\Mo$. We denote $n^\pm = \#\partial^{\pm}\Mo$; a connected directed surface is then characterized by the triple $(g,n^+,n^-)$ where $g$ is the genus of the underlying surface. If $\Mo\in \bordo$ and $c\in \pi_0(\Mo)$, we denote $\Mo(c)\in \bordoc$ the corresponding surface and write $\Mo=\sqcup_{c}~\Mo(c)$. We also use the notation $d(\Mo)$ for the opposite of the Euler characteristic; if $\Mo$ is of type $(g,n^+,n^-)$ we have
\begin{equation*}
    d(\Mo)=2g-2+n^++n^-,
\end{equation*} 
moreover, the degree is additive under disjoint union \begin{equation*}
    d(\Mo_1\sqcup \Mo_2)=d(\Mo_1)+d(\Mo_2).
\end{equation*}
Two directed surfaces play a central role in what follow: the two pairs of pants $P_+$ and $P_-$, of respective type $(0,2,1)$ and $(0,1,2)$, both satisfying $d(P_{\pm})=1$. If $\Mo\in \bordoc$, we define
\begin{equation}
\label{formula_LambdaMo}
   \Lambda_{\Mo}=\left\{L=(l_\beta)\in \Rp^{\partial M}~|~{\sum}_\beta \epsilon(\beta)l_\beta=0 \right\},
\end{equation}
for a disconnected surface, if $\Mo=\sqcup_c~ \Mo(c)$ we set $\Lambda_{\Mo}=\prod_c \Lambda_{\Mo(c)}$. 

\begin{rem}[Labeling:]
\label{remark_label_1}
We consider surfaces with labeled boundaries; a \textit{labeling} on $\Mo$ is a pair of bijections $\lf^{\pm}: \partial^{\pm} \Mo\to \llbracket 1 , n^{\pm}\rrbracket$. These bijections define two partitions $(I^\pm(c))_c$ of $ \llbracket 1 , n^{\pm}\rrbracket$ indexed by the connected components $c$ of $\Mo$, we have $I^{\pm}(c)=\lf^{\pm}(\partial \Mo(c))$. If the surface is connected of type $(g,n^+,n^-)$, $\Lambda_{\Mo}$ can be identified  with the space $\Lambda_{n^{+},n^{-}}$ given in the introduction (see Formula \ref{formula_Lambda_1}).
\end{rem}

\paragraph{Directed stable graphs:}
\label{paragraph_directed_stable}
A \textit{directed stable graph} $\Go$ is the data of a directed surface $\tilde{M}^{\circ}_{\Go}$ with an involution $s_1: \partial \tilde{M}^{\circ}_{\Go}\to \partial \tilde{M}^{\circ}_{\Go}$, that satisfies $\epsilon(s_1\beta)=-\epsilon(\beta)$ for all boundary components $\beta$ that is not a fixed point of $s_1$. We denote $X\Go =\partial \tilde{M}^{\circ}_{\Go}$, the edges $X_1\Go$ are the orbits of $s_1$ of length two and the legs $\partial \Go$ are the fixed points. We assume that the set $\partial \Go$ is nonempty. The vertices $X_0\Go$ of $\Go$, corresponds to the connected components of $\tilde{M}^{\circ}_{\Go}$, for $c\in X_0\Go$ we denote $\Go(c)$ the corresponding connected components. \\
We can orient the edges of $\Go$ so that the source is positive and the target is negative (see Figure \ref{figure_acyclic_pant_decomp}). By forgetting the topology of the components, a directed stable graph defines a directed graph.\\
 A directed stable graph $\Go$ represents a family of directed surfaces glued together along boundary components that have opposite signs (To be consistent, we must assume that the graphs are non-degenerate).  We denote  $\Aut(\Go)$ the group of automorphisms of $\Go$ that fix $\partial \Go$. By gluing the boundary components identified by $s_1$, we obtain a stable directed surface $\Mo_{\Go}$.\\
\begin{figure}
    \centering
    \includegraphics[width=0.5\linewidth]{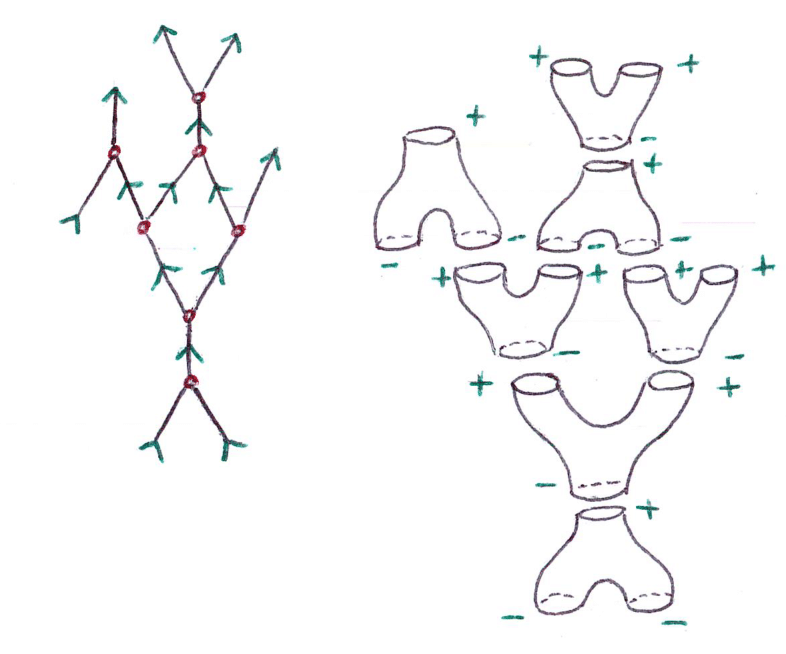}
    \caption{An acyclic directed stable graph.}
    \label{figure_acyclic_pant_decomp}
\end{figure}

\paragraph{Acyclic stable graphs:}
\label{paragraph_acyclic_stable}
A directed stable graph $\Go$ induces a relation on the set ofvertices.$X_0\Go$. We say that $x\ge y$ iff there is a path oriented positively from $y$ to $x$. A graph is \textit{acyclic} if there is no closed cycle, or equivalently, if this relation is a strict partial order on $X_0\Go$. A \textit{linear order} on a directed acyclic graph is an enumeration of the vertices, which is increasing for the order relation.\\
We denote $\acyclo$ the set of isomorphism classes of acyclic stable graphs and $\acyclol$ the labeled version. For a given directed surface $\Mo$, we denote $\acyclo(\Mo)$ the set of acyclic stable graphs $\Go$ with $\Mo\simeq \Mo_{\Go}$ and for each $(g,n^+,n^-)$ with $2g-2+n^++n^->0$ we denote $\acyclo_{g,n^+,n^-}$ the labeled acyclic stable graphs of this topological type. We also use the upper script $*$ to design acyclic stable graphs whose vertices are all pairs of pants of type $(0,1,2)$ and $(0,2,1)$. 
\paragraph{Cone of relative cycles:}
\label{paragraph_cone}
If $\Go=(\G,\epsilon)$ is a directed stable graph, we can consider the cone $\Lambda_{\Go}$ of directed cycles on $\Go$:
\begin{equation}
\label{conedirected_graph}
\Lambda_{\Go} = \{L\in \prod_c \Lambda_{\Go(c)}~|~l_\beta=l_{s_1(\beta)}~~~~\forall~\beta\in X\Go\}.
\end{equation}
For each $\gamma\in X_1\G$ the projection $l_\gamma : \Lambda_{\Go}\longrightarrow \Rp$ represents the length of $\gamma$, at each vertex of the graph, the signed sum of the lenghts is equal to $0$. If $\Go\in \acyclo(\Mo)$, we can identify $\partial \Go$ and $\partial \Mo$ andset it $\Lbord=(l_\beta)_{\beta\in \partial M}$, it defines a linear map:
\begin{equation*}
\Lbord : \Lambda_{\Go} \longrightarrow \Lambda_{\Mo}.
\end{equation*}
For all $L\in \Lambda_{\Mo}$, we denote $\Lambda_{\Go}(L)=\Lbord^{-1}(\{L\})$; if $L$ is in txhe image of $\Lbord$, the space $\Lambda_{\Go}(L)$ is a polytope that is endowed with a natural Lebesgue measure $d\sigma_{\Go}(L)$ normalized by a lattice of integer points in its tangent space. As $\Go$ is acyclic, the polytope $\Lambda_{\Go}(L)$ is bounded and we can define:
\begin{equation}
\label{formula_VG}
        V_{\Go}(L)=\int_{\Lambda_{\Go}(L)}\prod_\gamma l_\gamma ~d\sigma_{\Go}(L).
\end{equation}
It is a continuous piecewise polynomial function on $\Lambda_{\Mo}$. If $\Go$ is labeled, we can split $L=(L^+,L^-)$ and regard $V_{\Go}$ as a function $V_{\Go}(L^+|L^-)$ of two sets of variables.
\subsection{Generalities on \textit{Bosonic} Fock spaces}
\label{subsection_boson}
\paragraph{Multi-indices and partitions:}
\label{paragraph_tensor}

Let $A$ be a countable set. Defines the set of multi-indices $\mathcal{I}(A)= \bigsqcup_{n\ge 0} A^n$, where $\alphab=(\alpha_1,...,\alpha_n)$ has length $n(\alphab)=n$. By convention, $A^0$ contains a single element the empty index. There is a natural monoid structure $\otimes$ on $\mathcal{I}(A)$ given by concatenation.\\
A \textit{partition} $\mu \in \Pc(A)$ is a map $\mu: A \longrightarrow \N$ with finite support. Its length is $n(\mu)=\sum_{\alpha\in A} \mu(\alpha)$. The symmetric group $\frak{S}_n$ acts on the set $A^n$, and its orbits  are indexed by partitions $\mathcal{P}_n(A)$ of length $n$. Given $\alphab\in A^n$ let $\mu_{\alphab}\in \mathcal{P}_n(A)$ the partition given by $\mu_{\alphab}(\alpha)=\#\{i|\alpha_i=\alpha\}$. The orbit of $\alphab$ is the set $C(\mu_{\alphab})$ of all the $\alphab'$ with $\mu_{\alphab'}=\mu_{\alphab}$, we have
\begin{equation*}
 \#C(\mu)=\frac{n(\mu)!}{\mu!},~~~\text{with}~~~~~\mu! =\prod_\alpha \mu(\alpha)!.
\end{equation*}
There is a natural law $+$ on $\Pc(A)$ and the map $\alphab\to \mu_{\alpha}$ is additive.

\paragraph{Tensor and symmetric algebra:}
Let $V$ be the free vector space over $\Q$ generated by $A$ (in what follows $V=\Q[L]$ and $A=\N$). For $n\in \N$, we denote $T_n(V)=V^{\otimes n}$ and $T(V)=\bigoplus_n T_n(V)$. It admits a basis given by the vectors $e_{\alphab}=e_{\alpha_1}\otimes ... \otimes e_{\alpha_{n(\alphab)}}$ where $\alphab\in \mathcal{I}(A)$, by convention, the space $T_0(V)$ is one-dimensional and generated by the vector $e_\emptyset$. We can endow $V$ with the scalar product $\langle.,.\rangle$ in which the vectors $(e_\alpha)_{\alpha}$ are orthonormal. We extend this to $T(V)$ by $\langle e_{\alphab},e_{\alphab'}\rangle =\frac{\delta_{\alphab,\alphab'}}{n(\alphab)!}$.\\
The group $\frak{S}_n$ acts on $T_n(V)$ by permuting variables, and this action is compatible with the one on multi-indices. The symmetric space $S_n(V)$ is the space of invariants. It admits a basis $e_\mu=\sum_{\alphab\in C(\mu)}e_{\alphab}$ indexed by partitions with $\mu\in \mathcal{P}_n(A)$. Let $s$ be the symmetrization operator on $T(V)$, we have the relation $s(e_{\alphab})=\mu_{\alphab}!~e_{\mu_{\alphab}}$. The  \textit{polynomial} Bosonic Fock space (or symmetric algebra) is defined as the direct sum
\begin{equation*}
S(V)=\bigoplus_n S_n(V)=\bigoplus_{\mu\in \mathcal{P}(A)} \Q~e_\mu.
\end{equation*}
The basis $(e_\mu)_{\mu\in \mathcal{P}(A)}$ satisfies the relation $\mu_1! \langle e_{\mu_1},e_{\mu_2}\rangle=\delta_{\mu_1,\mu_2}$.\\

\paragraph{Graduations and completions:}
Let $d: A\longrightarrow \N$ be a function with finite preimage, the completion $\hat{V}$ is $\prod_d V^d$ where $V^d$ is the space of homogeneous elements of degree $d$. The graduation admits a natural extension to multi-indices and partition by additivity
\begin{equation*}
d(\alphab)=\sum_i d(\alpha_i),~~~d(\mu)=\sum_\alpha \mu(\alpha)d(\alpha).
\end{equation*}
This induces a graduation on both $S(V)$ and $T(V)$ and a second graduation is given by the length $n$. The canonical basis are homogeneous for both $d$ and $n$. We denote $S_n^d(V)$ and $T_n^d(V)$ the homogeneous elements of degree $(d,n)$. The completion of $T(V)$ and $S(V)$ for the topology given by the two graduations $n,d$ will be denoted $\hat{T}(V)$ and $\hat{S}(V)$, we call the later space the \textit{formal} Bosonic Fock space and is given by
\begin{equation*}
\hat{S}(V)=\prod_{d,n} S_n^d(V).
\end{equation*}
Which can be identified with $\Q^{\mathcal{P}(A)}$, the elements of $\hat{S}(V)$ are formal sums $\sum_{\mu \in \mathcal{P}(A)} a(\mu)e_\mu.$

\begin{rem}
\label{remark_grad_operator}
The graduations $d,n$ are associated to two operators $D,N$ on $\hat{S}(V)$, they commute and are diagonal in the canonical basis $(e_\mu)_{\mu}$. We have $Ne_{\mu}=n(\mu) e_\mu~~\text{and}~~De_\mu = d(\mu) e_\mu.$
\end{rem}

\paragraph{Symmetric product (disjoint union):}
\label{paragraph_union}
The tensor product endows $T(V)$ with a structure of graded algebra:
\begin{equation*}
T_{n_1}^{d_1}(V)\otimes T_{n_2}^{d_2}(V)\longrightarrow T_{n_1+n_2}^{d_1+d_2}(V).
\end{equation*}
Applyed to the canonical basis it satisfies $e_{\alphab_1}\otimes e_{\alphab_2}=e_{\alphab_1\otimes \alphab_2}$. The symmetrization of the tensor product defines the symmetric product (disjoint union). It is defined by
\begin{equation*}
 x \sqcup y=\frac{s_{n_1+n_2}(x \otimes y)}{n_1!~n_2!},~~~~\forall (x,y)\in S_{n_1}(V)\times S_{n_2}(V).
\end{equation*}
Both $\otimes$ and $\sqcup$ are additive for the two graduations, then they are continuous for the topology and well defined on the completion. In the canonical basis$(e_\mu)_{\mu \in \Pc(A)}$, the product is characterized by
\begin{equation}
\label{formula_disjoint_union_vector}
e_\mu \sqcup e_\nu= \binom{\mu+\nu}{\mu~~\nu}e_{\nu+\mu},~~~\text{with}~~~\binom{\mu+\nu}{\mu~~\nu}=\frac{(\mu+\nu)!}{\mu!~\nu!}=\prod_\alpha \frac{(\mu(\alpha)+\nu(\alpha))!}{\mu(\alpha)!\nu(\alpha)!}.
\end{equation}
From Formula \ref{formula_disjoint_union_vector}, $(\hat{S}(V),\sqcup,+)$ is a commutative algebra and the unit is given by the vacuum $e_{\emptyset}$. If two partitions have disjoint supports, according to Formula \ref{formula_disjoint_union_vector}, they satisfy $e_\mu \sqcup e_\nu = e_{\mu+\nu}$, and we also have $e_{\alpha}^{\sqcup k} =e_{(\alpha)}^{\sqcup k}=k!e_{k(\alpha)}$ for all $\alpha \in A$ and $k\in \N$. From these two relations, we obtain:
\begin{equation}
\label{formula_free_product_S(V)}
\mu! e_\mu =\bigsqcup_\alpha e_\alpha^{\mu(\alpha)}.
\end{equation}
Then, it is natural to associate to a vector $a\in S(V)$ a generating series in the following way
\begin{equation*}
Z_a(\tb)=\sum_\mu \frac{a(\mu)\tb^\mu}{\mu!},~~~\text{where}~~~\tb^\mu=\prod_{\alpha\in A} t_\alpha^{\mu(\alpha)}.
\end{equation*}
The series $Z_a(\tb)$ belongs to $\C[\tb]$, the space of polynomials in $\tb=(t_a)_{a\in A}$ that depend only on a finite number of variables. According to Formula \ref{formula_disjoint_union_vector}, we can see that the application $a\to Z_a$ is multiplicative
\begin{equation*}
Z_{a_1\sqcup a_2}(\tb)=Z_{a_1}(\tb)Z_{a_2}(\tb).
\end{equation*}
Then $Z$ defines an isomorphism of algebras $Z:~S(V)\longrightarrow \Q[\tb]$, which preserves the graduations. Then, we can extend $Z$ to the completion $\hat{S}(V)$ and this induces an isomorphism:
\begin{equation*}
Z: \hat{S}(V)\longrightarrow \C[[\tb]].
\end{equation*}

\subsection{Operators on Fock spaces}
\label{subsection_operator_boson}
\paragraph{Operators:}
\label{paragraph_operator_boson}
Let $K\in \End(T(V))$ we denote  by $(K[\alphab^+|\alphab^-])_{(\alpha^+,\alpha^-)\in \mathcal{I}(A)^2 }$ the matrix coefficients of $K$ in the canonical basis.  To be well defined as an operator on $T(V)$, each column of the matrix must contain only a finite number of non-vanishing coefficients. If $K\in \End(S(V))$ it is representend by a matrix indexed by pairs of partitions $(K[\mu^+|\mu^-])_{(\mu^+,\mu^-)}$.\\
We mainly consider operators on the formal completions $\hat{T}(V)$ and $\hat{S}(V)$; in this case we have $K\in \End(\hat{T}(V))$ (or $\End(\hat{S}(V))$) iff the rows of its matrix are finite. \footnote{This is natural because the dual of the space $T(V)$ is identified with its formal completion $\hat{T}(V)$, the transpose of a matrix with finite columns has finite rows.} An operator is homogeneous of degree $(d,n)$ iff
\begin{equation*}
[D,K]=dK,~~~\text{and}~~~[N,K]=nK,
\end{equation*}
and then $K$ induces an operator $K \in \Hom(T_{*}^{*}(V), T_{*+n}^{*+d}(V))$. An homogeneous operator of degree $(d,n)$ always defines an operator in $\End(\hat{T}(V))$\footnote{A sum of the form  $K=\sum_{n,d}K^{d}_{n}$ does not always converge, because $(d,n)$ could take negative values. In the case of operators in $\Hom(\hat{T}_{n^-}(V),\hat{T}_{n^+}(V))$ an expression of the form $
\sum_{d\ge k} A_{n^+,n^-}^d~~~k\in \Z$ is always well defined.}.

\paragraph{Coproduct and union of operators:}
\label{paragraph_union_operator}
The product defines a map $\theta : S(V)\otimes S(V)\longrightarrow S(V)$, the coproduct is the dual operator
\begin{equation*}
\theta^* : S(V)^* \longrightarrow S(V)^*\otimes S(V)^*.
\end{equation*}
 Using the scalar product, for each $d$, $\theta^{*}$ defines a map
\begin{equation*}
\theta^* : S^d(V) \longrightarrow (S(V)\otimes S(V))^d.
\end{equation*}
The target is the subspace of elements of degree $d$ in the tensor product. We have the following formula:
\begin{equation}
\label{formula_coproduct_vector}
\theta^*e_\mu = \sum_{\mu_1+\mu_2=\mu} e_{\mu_1} \otimes e_{\mu_2}.
\end{equation}
Which defines an operator on both $S(V)$ and $\hat{S}(V)$.\footnote{From the last formula, we can see $\theta^*$ as the splitting operator and $\theta$ as the union operator.}\\
With these two operations, it is natural to define the product of two operators $K_1,K_2$ by
\begin{equation*}
\label{def_product_operators}
K_1\sqcup K_2 = \theta \circ (K_1\otimes K_2) \circ \theta^*.
\end{equation*}
This definition leads to the quite natural formula
\begin{equation}
\label{formule_union_op_1}
(K_1\sqcup K_2) e_\mu = \sum_{\mu_1+\mu_2=\mu} (K_1 e_{\mu_1}) \sqcup (K_2 e_{\mu_2})~~~~~~~\forall~\mu~\in \mathcal{P}(A).
\end{equation}
In terms of the coefficients of the matrices, this equivalent to:
\begin{equation}
\label{formule_union_op_coeff}
(K_1\sqcup K_2)[\mu^+|\mu^-]=\sum_{\mu^+=\mu^+_1+\mu^+_2,\mu^-=\mu^-_1+\mu^-_2}\binom{\mu^+}{\mu_1^+~ \mu_2^+}K_1[\mu^+_1|\mu^-_1]K_2[\mu^+_2|\mu^-_2].
\end{equation}

From Formulas \ref{formule_union_op_1} and \ref{formule_union_op_coeff}, we see that the union is well defined for operators on $S(V)$ and $\hat{S}(V)$, the sums that are involved are always finite. We can summarize this discussion by the following:

\begin{prop}
The disjoint union defines a commutative product:
\begin{equation*}
\End(\hat{S}(V))\otimes \End(\hat{S}(V))\longrightarrow \End(\hat{S}(V)).
\end{equation*}
Which preserves the two graduations.
\end{prop}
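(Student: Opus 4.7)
The plan is to verify the three claims of the proposition directly from the explicit matrix formula \ref{formule_union_op_coeff}, namely that $K_1\sqcup K_2$ is a well-defined element of $\End(\hat{S}(V))$, that the product is commutative, and that it is additive in the bigraduation $(d,n)$. Since operators on $\hat{S}(V)$ are characterized (as recalled just before the statement) by having only finitely many nonzero entries in each row of their matrix, the work reduces to checking this finiteness property for $K_1\sqcup K_2$ and then reading off commutativity and graduation compatibility from the formula.

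First I would check well-definedness. Fix $\mu^+\in\mathcal{P}(A)$. The defining sum
\begin{equation*}
(K_1\sqcup K_2)[\mu^+|\mu^-]=\sum_{\mu^+=\mu^+_1+\mu^+_2,\,\mu^-=\mu^-_1+\mu^-_2}\binom{\mu^+}{\mu_1^+~ \mu_2^+}K_1[\mu^+_1|\mu^-_1]K_2[\mu^+_2|\mu^-_2]
\end{equation*}
is always finite, because $\mu^+$ has finite support, so the set of splittings $(\mu_1^+,\mu_2^+)$ is finite, and likewise for any prescribed $\mu^-$. What must be shown is that the row $\{(K_1\sqcup K_2)[\mu^+|\mu^-]\}_{\mu^-}$ has finite support. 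But for each of the finitely many pairs $(\mu_1^+,\mu_2^+)$, the hypothesis $K_i\in\End(\hat{S}(V))$ says the sets $\{\mu_i^-~|~K_i[\mu_i^+|\mu_i^-]\ne 0\}$ are finite; hence the sums $\mu^-=\mu_1^-+\mu_2^-$ producing a nonzero term form a finite set. This gives $K_1\sqcup K_2\in\End(\hat{S}(V))$.

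Next, commutativity follows by renaming the dummy indices $(\mu_1^{\pm},\mu_2^{\pm})\leftrightarrow(\mu_2^{\pm},\mu_1^{\pm})$ in the formula above and using the symmetry of the multinomial coefficient. Finally, for the graduations, it is enough to argue on bihomogeneous operators since every operator decomposes as a (possibly infinite) sum of bihomogeneous components. If $K_i$ has bidegree $(d_i,n_i)$, then $K_i[\mu_i^+|\mu_i^-]\ne 0$ forces $d(\mu_i^+)-d(\mu_i^-)=d_i$ and $n(\mu_i^+)-n(\mu_i^-)=n_i$, and summing these relations over the splittings appearing in the formula shows that nonvanishing entries of $K_1\sqcup K_2$ satisfy the analogous relations with $(d_1+d_2,n_1+n_2)$. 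Equivalently, $[D,\cdot]$ and $[N,\cdot]$ are derivations of $\sqcup$, which follows from the fact that $\theta$ and $\theta^*$ are both of bidegree $(0,0)$ (the coproduct formula \ref{formula_coproduct_vector} only splits a partition into two partitions whose sum has the same degree and length).

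Nothing here is genuinely difficult; the only subtlety is bookkeeping — ensuring that one works consistently in $\End(\hat{S}(V))$ rather than in the larger space of matrices with arbitrary rows, and exploiting the finite-row condition at the right moment. In particular, writing $K_1\sqcup K_2=\theta\circ(K_1\otimes K_2)\circ\theta^*$ requires interpreting $K_1\otimes K_2$ on the suitable completion of $\hat{S}(V)\otimes\hat{S}(V)$, but this is circumvented entirely by taking the matrix-coefficient formula as the working definition, which is the shortest route.
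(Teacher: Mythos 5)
Your proof is correct and follows essentially the same route as the paper, which simply observes that Formulas \ref{formule_union_op_1} and \ref{formule_union_op_coeff} make the union well defined because all sums involved are finite. You merely spell out the finite-row check, the symmetry of the multinomial coefficient, and the additivity of the bidegree, all of which are the details the paper leaves implicit.
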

Let $k\in \N$ and $\pr_k$ be the projection
\begin{equation*}
    \pr_k: S(V)\longrightarrow S_k(V).
\end{equation*}
\begin{lem}
\label{lemma_identity_pr}
\begin{itemize}
    \item The projection $\pr_0$ satifies  $\pr_0\sqcup K=K$ for all $K\in \End(\hat{S}(V))$ then it is the unit for $\sqcup$.
    \item The power $\pr_1^{\sqcup k}$ is the symmetrization on $T_k(V)$, on $S(V)$, it satisfies $\frac{\pr_1^{\sqcup k}}{k!}=\pr_k$.
    \item We have the identity $\id = \exp_{\sqcup}(\pr_1)$ on $S(V)$.
\end{itemize}
\end{lem}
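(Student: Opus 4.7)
The plan is to verify each of the three bullets by a direct computation using Formula \ref{formule_union_op_1} together with the disjoint-union rule \ref{formula_free_product_S(V)}.

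For the first bullet, I would apply $\pr_0 \sqcup K$ to a basis vector $e_\mu$ via Formula \ref{formule_union_op_1}:
\[(\pr_0 \sqcup K)(e_\mu) = \sum_{\mu_1+\mu_2 = \mu} \pr_0(e_{\mu_1}) \sqcup K(e_{\mu_2}).\]
The only surviving term is $\mu_1 = \emptyset$, giving $e_\emptyset \sqcup K(e_\mu) = K(e_\mu)$ since $e_\emptyset$ is the unit of $\sqcup$ on vectors (noted just after Formula \ref{formula_disjoint_union_vector}).

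For the second bullet, iterating Formula \ref{formule_union_op_1} yields
\[\pr_1^{\sqcup k}(e_\mu) = \sum_{\mu_1+\cdots+\mu_k = \mu} \pr_1(e_{\mu_1}) \sqcup \cdots \sqcup \pr_1(e_{\mu_k}).\]
Each factor $\pr_1(e_{\mu_i})$ vanishes unless $\mu_i$ has length one, so every contributing term has $\mu_i = (\alpha_i)$ with $\sum_i (\alpha_i) = \mu$, forcing $n(\mu) = k$. By Formula \ref{formula_free_product_S(V)} and commutativity of $\sqcup$, each such product equals $\mu!\,e_\mu$, while the number of ordered $k$-tuples $(\alpha_1,\dots,\alpha_k)$ with associated partition $\mu$ is $|C(\mu)| = k!/\mu!$. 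Multiplying, $\pr_1^{\sqcup k}(e_\mu) = k!\,e_\mu$ when $n(\mu) = k$ and $0$ otherwise, so $\pr_1^{\sqcup k} = k!\,\pr_k$ on $S(V)$. For the symmetrization statement, I would check on $e_\mu \in S_k(V) \subset T_k(V)$ that $s(e_\mu) = \sum_{\alphab \in C(\mu)} s(e_\alphab) = |C(\mu)|\,\mu!\,e_\mu = k!\,e_\mu$, which matches $\pr_1^{\sqcup k}(e_\mu)$ and justifies the identification on $T_k(V)$.

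The third bullet follows formally: by the second bullet and the convention $\pr_1^{\sqcup 0} = \pr_0$,
\[\exp_{\sqcup}(\pr_1) = \sum_{k \ge 0} \frac{\pr_1^{\sqcup k}}{k!} = \sum_{k \ge 0} \pr_k = \id_{S(V)},\]
where the final equality uses the finite decomposition $x = \sum_k \pr_k(x)$ for every $x \in S(V)$. The only delicate point is the combinatorial matching in the second bullet, namely pairing the $k!/\mu!$ ordered decompositions of $\mu$ into singletons against the factor $\mu!$ from Formula \ref{formula_free_product_S(V)}; once this is done, the symmetrization identification and the exponential identity are immediate.
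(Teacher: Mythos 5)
Your computation is correct and is the direct verification the paper omits (the lemma is stated there without proof): the first and third bullets follow exactly as you argue, and the combinatorial matching $\pr_1^{\sqcup k}(e_\mu)=\frac{k!}{\mu!}\cdot \mu!\,e_\mu=k!\,e_\mu$ for $n(\mu)=k$, pairing the $k!/\mu!$ ordered singleton decompositions against Formula \ref{formula_free_product_S(V)}, is the right one. The only caveat is that you check the symmetrization claim only on $S_k(V)\subset T_k(V)$ rather than on all of $T_k(V)$; since $\pr_1^{\sqcup k}$ is a priori defined via $\theta,\theta^*$ as an operator on $S(V)$, this is really an imprecision of the statement itself, and the identities actually used downstream ($\pr_1^{\sqcup k}=k!\,\pr_k$ and $\exp_{\sqcup}(\pr_1)=\id$ on $S(V)$) are fully established by your argument.
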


\begin{rem}
We can associate to an operator $K$ the formal series
\begin{equation*}
Z_K=\sum_{\mu_+,\mu_-}K[\mu_+|\mu_-]\frac{\tb_+^{\mu_+} \tb_-^{\mu_-}}{\mu_+!}.
\end{equation*}
It is an element of $\Q[\tb_-][[\tb_+]]$ if $K\in \End(\hat{S}(V))$ (resp. $\Q[\tb_+][[\tb_-]]$ if $K\in \End(S(V))$). Thus, similarly to the case of $\hat{S}(V)$, by using Formula \ref{formule_union_op_1}, these series satisfy $Z_{K_1\sqcup K_2}=Z_{K_1}Z_{K_2}$.
Indeed, the elements $t_{\alpha,+}$ and $t_{\alpha,-}$ correspond, respectively, to $e_{\alpha,+}=e_{\alpha}\otimes e_{\emptyset}^*$ and $e_{\alpha,-}=e_{\emptyset}\otimes e_{\alpha}^*$. If $E_{\mu^+,\mu^-}=e_{\mu^+}\otimes e_{\mu^-}^*$,  we have
\begin{equation*}
E_{\mu^+_1,\mu^-_1}\sqcup E_{\mu^+_2,\mu^-_2}=\binom{\mu^+_1 + \mu^+_2}{\mu^+_1~\mu^+_2}E_{\mu^+_1+\mu^+_2,\mu^-_1+\mu^-_2},
\end{equation*}
and we can deduce the factorization $ E_{\mu^+,\mu^-}= E_{\mu^+,\emptyset}\sqcup E_{\emptyset,\mu^-}.$ Then, the $E_{\mu^+,\mu^-}$ span a subalgebra of $\End(S(V))$, which is isomorphic to $S(V\oplus V^*)= S(V)\otimes S(V^*)$, which justifies the identification with $\Q[[\tb_+,\tb_-]]$.
\end{rem}

\paragraph{Creation and annihilation operator:}
\label{paragraph_creation}
In the theory of Fock spaces, for $\alpha \in A$, it is natural to consider the \textit{creation} operators
\begin{equation*}
\varphi_\alpha(x)=e_\alpha \sqcup x,~~~\text{then}~~~\varphi_\alpha e_\mu = (\mu(\alpha)+1)e_{\mu+(\alpha)}.
\end{equation*}
The \textit{annihilation} operator $\varphi_\alpha^*$ is defined by duality; we have
\begin{equation*}
\varphi_\alpha^* e_\mu^* = \mu(\alpha) e_{\mu-(\alpha)}^*,~~~\text{then}~~~\varphi_\alpha^* e_\mu=\delta_{\mu(\alpha)>0}e_{\mu-(\alpha)}.
\end{equation*}
Creation and annihilation operators belong to $\End(\hat{S}(V))\cap \End(S(V))$. The two operators $\varphi_\alpha,\varphi_{\alpha'}^*$ satisfy the usual commutation relation
\begin{equation*}
[\varphi_\alpha^*,\varphi_{\alpha'}]=\delta_{\alpha,\alpha'}.
\end{equation*}
By using the identification $S(V)\to \Q[t]$, it is straightforward to see that the operators $\varphi_\alpha,\varphi_\alpha^*$ are given by the following differential operators:
\begin{equation}
\label{formula_creation_annihilation_diff_op}
\varphi_\alpha=t_\alpha, ~~~\text{and}~~~\varphi_\alpha^*=\partial_\alpha,~~~\text{with}~~~\partial_\alpha=\frac{\partial }{\partial t_\alpha}.
\end{equation}
In other words, for all $\alpha\in A$
\begin{equation*}
Z_{\varphi_\alpha (a)}=t_\alpha Z_a,~~~\text{and}~~~ Z_{\varphi_\alpha (a)}=\partial_\alpha Z_a,~~~\forall~a\in \hat{S}(V)
\end{equation*}
 Let $\varphi_\mu=\prod_\alpha \varphi_\alpha^{\mu(\alpha)}$ and $\varphi_\mu^*$ the dual operator, we have:
\begin{equation*}
Z_{\varphi_\mu(a)}=\tb^{\mu}Z_x,~~~\text{and}~~~Z_{\varphi_\mu^*(a)}=\partial_\mu Z_a,
\end{equation*}
where we use the notation $\partial_\mu=\prod_\alpha\partial_\alpha^{\mu(\alpha)}$. The degree of $\varphi_{\mu^+} \varphi_{\mu^-}^*$ is $d(\mu^+)-d(\mu^-)$, then, series of the form:
\begin{equation*}
\sum_{\mu^+,\mu^-} D[\mu^+,\mu^-] \varphi_{\mu^+} \varphi_{\mu^-}^*,
\end{equation*}
are well defined on $S(V)$ iff the matrix $D[.,.]$ has finite columns and on $\hat{S}(V)$ iff it has finite rows. We denote $\D(V)$ and $\Dh(V)$ these subspaces of $\End(S(V))$ and $\End(\hat(S)(V)$.

\paragraph{Projective limit and differential operators:}
\label{paragraph_limit}
\label{paragraph_operator_proj_lim}
In what follows, there is a zero element $0\in A$ such that $d(0)=0$. We assume that such an element is unique and denote $A^*=A\backslash\{0\}$. Then we can consider the annihilation operator $\varphi_0^*$; it induces a family of operators $\varphi_0^*: S_{n+1}(V)\to S_{n}(V)$, that define a projective system $S_0(V)\longleftarrow S_1(V)\longleftarrow S_2(V) \longleftarrow \cdots$. We denote
\begin{equation}
\label{formula_def_Sinfty(V)}
S_\infty^d(V)=\varprojlim_n S_n^d(V),~~~\text{and}~~~\hat{S}_{\infty}(V)= \prod_d S_\infty^d(V).
\end{equation}
 The space $\hat{S}_{\infty}(V)$ is a subspace of $\hat{S}(V)$, a vector $a$ belong to $\hat{S}_{\infty}(V)$ iff it satisfies
\begin{equation*}
\varphi_0^* a=a.
\end{equation*}
In terms of formal series in $\Q[[\tb]]$, if $a\in \hat{S}_{\infty}(V)$, the formal series $Z_a$ satisfies $\partial_0Z_a=Z_a.$ Hence, $Z_a$ is of the form $Z_a(\tb)=\exp(t_0)Z_a(0,\tb^*)$, where we denote $\tb^*=(t_\alpha)_{\alpha\in A^*}$. In particular, for  $\mu\in \Pc(A^*)$, the vectors $\eb_{\mu}=e_\mu\sqcup \exp_{\sqcup}(e_0)$ form a basis of $S_{\infty}(V)$. They correspond to series $\tb^\mu \exp(t_0)$. In particular, the vacuum is
\begin{equation*}
 \eb_{\emptyset}=\exp_{\sqcup}(e_0),
\end{equation*}
and corresponds to the series $\exp(t_0)$.

\begin{rem}[Symmetric functions]
If $V=\Q[L]$ is the space of polynomials, $e_0$ corresponds to $1$, the constant polynomial; the map $\varphi_0^*$ corresponds to the evaluation of the first variable at $0$, i.e. $\varphi_0^*P(L)=P(0,L)$. The space $S_\infty(V)$ is, in this case, the space of symmetric functions with an arbitrary number of variables; $\hat{S}_\infty(V)$ is then the formal completion of this space for the graduation given by the degree.
\end{rem}

Let $K$ be an operator on $\hat{S}(V)$, we consider the operator $K\sqcup \id$.

\begin{prop}
\label{prop_unstable_diff_operators}
Let $K\in \End(\hat{S}(V))$ and $(K[\mu^+,\mu^-])$ be its matrix. We have the following expression in terms of creation and annihilation operators:
\begin{equation*}
K\sqcup \id = \sum_{\mu^+,\mu^-} A[\mu^+|\mu^-] \frac{\varphi_{\mu^+}\varphi_{\mu^-}^*}{\mu^+!}.
\end{equation*}
 \end{prop}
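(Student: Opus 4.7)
The proof is a direct comparison of matrix coefficients in the canonical basis $(e_\nu)_{\nu \in \Pc(A)}$ on both sides.

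First, I would compute the action of the elementary operator $\varphi_{\mu^+}\varphi_{\mu^-}^*/\mu^+!$ on a basis vector. Iterating the rules $\varphi_\alpha e_\nu = (\nu(\alpha)+1)e_{\nu+(\alpha)}$ and $\varphi_\alpha^* e_\nu = e_{\nu - (\alpha)}$ (when $\nu(\alpha)>0$, else zero) from Paragraph \ref{paragraph_creation}, I get, whenever $\mu^- \leq \mu^-_0$ componentwise,
\[
\frac{\varphi_{\mu^+}\varphi_{\mu^-}^*}{\mu^+!}\, e_{\mu^-_0} = \frac{(\mu^-_0 - \mu^- + \mu^+)!}{\mu^+!\,(\mu^-_0 - \mu^-)!}\, e_{\mu^-_0 - \mu^- + \mu^+}.
\]
Summing against $K[\mu^+|\mu^-]$ and extracting the coefficient of $e_{\mu^+_0}$ in the output gives the $(\mu^+_0, \mu^-_0)$-matrix coefficient of the right-hand side, subject to the linear constraint $\mu^+ = \mu^+_0 - \mu^-_0 + \mu^-$.

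Next, for the left-hand side I would use Formula \ref{formule_union_op_coeff} together with $\id[\mu_2^+|\mu_2^-] = \delta_{\mu_2^+, \mu_2^-}$. The Kronecker symbol collapses the double sum over splittings $\mu^\pm_0 = \mu_1^\pm + \mu_2^\pm$ down to a single sum over the common ``uncontracted'' part $\mu_2 := \mu_2^+ = \mu_2^-$, giving
\[
(K \sqcup \id)[\mu^+_0|\mu^-_0] = \sum_{\mu_2 \leq \min(\mu^+_0, \mu^-_0)} \binom{\mu^+_0}{\mu^+_0 - \mu_2,\; \mu_2}\, K[\mu^+_0 - \mu_2|\mu^-_0 - \mu_2].
\]

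Finally, the two expressions are matched via the substitution $\mu_2 := \mu^-_0 - \mu^-$ (forcing $\mu^+ = \mu^+_0 - \mu_2$): the conditions $\mu^- \leq \mu^-_0$ and $\mu^+ \geq 0$ become $0 \leq \mu_2 \leq \min(\mu^+_0, \mu^-_0)$, matching the domain of Step 2, and the combinatorial factor $\frac{(\mu^-_0 - \mu^- + \mu^+)!}{\mu^+!\,(\mu^-_0 - \mu^-)!}$ becomes $\frac{\mu^+_0!}{(\mu^+_0 - \mu_2)!\,\mu_2!} = \binom{\mu^+_0}{\mu^+_0 - \mu_2,\, \mu_2}$. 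The arguments of $K$ on both sides coincide, concluding the proof. The only real difficulty is purely bookkeeping: one must keep careful track of the index renaming and the cancellation of factorials, but no conceptual obstacle arises. A more conceptual variant would invoke $\id = \exp_\sqcup(\pr_1)$ from Lemma \ref{lemma_identity_pr} and a Wick-type expansion, but the direct matrix identification above is the shortest route.
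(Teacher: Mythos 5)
Your proof is correct and is essentially the paper's argument transcribed into matrix coefficients: the paper applies Formula \ref{formule_union_op_1} with one factor equal to $\id$ and recognizes $e_{\mu^+}\sqcup e_{\mu-\mu^-}=\varphi_{\mu^+}\varphi_{\mu^-}^*e_{\mu}/\mu^+!$, which is exactly your identification after the substitution $\mu_2=\mu_0^--\mu^-$. The only difference is presentational — the paper's use of $\varphi_{\mu^+}e_\mu=\mu^+!\,e_{\mu^+}\sqcup e_\mu$ lets the $\sqcup$ product absorb the multinomial bookkeeping you carry out explicitly.
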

Then, in light of the isomorphism $\hat{S}(V)\simeq \Q[[\tb]]$, if $K\in \End(\hat{S}(V)$ see that $K\sqcup \id$ is a formal differential operator in $\Dh(V)$ :
\begin{equation}
\label{formula_Au_differential_op}
K\sqcup \id = \sum A[\mu^+|\mu^-] \frac{\tb^{\mu^+}\partial_{\mu^-}}{\mu^+!}.
\end{equation}
 The correspondence $K\rightarrow K\sqcup \id$ identifies $\End(\hat{S}(V))$ and $\Dh(V)$ (and also $\D(V)$ with $\End(S(V))$).
\begin{proof}
By using the Formula \ref{formule_union_op_1}, we obtain
\begin{equation*}
(K\sqcup \id) e_\mu= \sum_{\mu_1+\mu_2=\mu} (K e_{\mu_1})\sqcup e_{\mu_2}=\sum_{\mu^-,~\mu^+} \delta_{\mu\ge \mu^-}~K[\mu^+|\mu-]~e_{\mu^+}\sqcup e_{\mu-\mu^-}.
 \end{equation*}
We have for all $\mu^+,\mu^-,\mu$
 \begin{equation*}
\varphi_{\mu^+}e_\mu=\mu^+!~e_{\mu^+}\sqcup e_\mu,~~~\varphi_{\mu^-}^*e_\mu=\delta_{\mu\ge \mu^-}~e_{\mu-\mu^-},
\end{equation*}
and then we can rewrite
\begin{equation*}
e_{\mu^+}\sqcup e_{\mu-\mu^-} =\frac{\varphi_{\mu^+}e_{\mu-\mu^-}}{\mu^+!}=\frac{\varphi_{\mu^+}\varphi_{\mu^-}^*e_{\mu}}{\mu^+!}.
\end{equation*}
It allows us to prove Proposition \ref{prop_unstable_diff_operators}.
\end{proof}

\begin{rem}
We can see that
\begin{equation*}
E_{\mu^+,\emptyset}\sqcup \id =\frac{\varphi_{\mu^+}}{\mu^+!},~~~\text{and}~~~E_{\emptyset,\mu^-}\sqcup id=\varphi_{\mu^-}^*.
\end{equation*}
Then we have $E_{\mu^+,\mu^-}\sqcup \id=\frac{\varphi_{\mu^+}\varphi_{\mu^-}^*}{\mu^+!}$, which also allows us to recover the last formula.
\end{rem}
A direct computation gives the following proposition:
\begin{prop}
\label{}
There is a unique operator $K_1\ast K_2\in \End(\hat{S}(V))$ that satisfies
\begin{equation*}
(K_1\sqcup \id )\circ (K_2\sqcup \id ) = (K_1\ast K_2) \sqcup \id.
\end{equation*}
\end{prop}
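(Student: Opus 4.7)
The plan is to deduce the proposition from Proposition \ref{prop_unstable_diff_operators} combined with a normal-ordering computation in $\Dh(V)$.

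Uniqueness is immediate from Proposition \ref{prop_unstable_diff_operators}: that result asserts that $K \mapsto K \sqcup \id$ is a bijection between $\End(\hat{S}(V))$ and $\Dh(V)$, in particular injective, so at most one $K \in \End(\hat{S}(V))$ can satisfy $K \sqcup \id = (K_1 \sqcup \id) \circ (K_2 \sqcup \id)$.

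For existence I would expand each factor in creation-annihilation form via Proposition \ref{prop_unstable_diff_operators}, compose the two resulting formal series, and push every $\varphi^*$ to the right of every $\varphi$ using the multi-index Wick identity
\begin{equation*}
\varphi_{\mu^-_1}^* \varphi_{\mu^+_2} = \sum_{\nu \leq \min(\mu^-_1, \mu^+_2)} \nu! \binom{\mu^+_2}{\nu} \binom{\mu^-_1}{\nu} \varphi_{\mu^+_2 - \nu} \varphi_{\mu^-_1 - \nu}^*,
\end{equation*}
which follows inductively from $[\varphi_\alpha^*, \varphi_\beta] = \delta_{\alpha, \beta}$. Collecting terms by the total creation index $\mu^+ = \mu^+_1 + \mu^+_2 - \nu$ and annihilation index $\mu^- = \mu^-_1 + \mu^-_2 - \nu$ allows one to read off candidate matrix coefficients $(K_1 * K_2)[\mu^+|\mu^-]$ as an explicit weighted sum of products $K_1[\mu^+_1|\mu^-_1] K_2[\mu^+_2|\mu^-_2]$.

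The main point---and the only substantive check---is that this matrix has finite rows, so that $K_1 * K_2$ genuinely lies in $\End(\hat{S}(V))$. Fixing $\mu^+$, the relation $\mu^+ = \mu^+_1 + \mu^+_2 - \nu$ together with $\nu \leq \mu^+_2$ forces $\mu^+_1 \leq \mu^+$, so $\mu^+_1$ ranges over a finite set; the finite-row property of $K_1$ then restricts $\mu^-_1$ to finitely many values, bounding $\nu \leq \mu^-_1$ and hence $\mu^+_2$; finally, the finite-row property of $K_2$ restricts $\mu^-_2$ to finitely many values, so $\mu^- = \mu^-_1 + \mu^-_2 - \nu$ itself runs over a finite set. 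With $K_1 * K_2$ so defined, a second application of Proposition \ref{prop_unstable_diff_operators} gives $(K_1 * K_2) \sqcup \id = (K_1 \sqcup \id) \circ (K_2 \sqcup \id)$, as required.
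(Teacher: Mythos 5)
Your proof is correct and follows essentially the same route as the paper, which simply cites Proposition \ref{prop_unstable_diff_operators} together with the assertion that $\Dh(V)$ is stable under composition. You additionally supply the content of that assertion (the multi-index Wick identity and the finite-rows verification), which the paper leaves implicit.
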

\begin{proof}
This is a consequence of Proposition \ref{prop_unstable_diff_operators} and the fact that $\Dh(V)$ is stable under composition.
\end{proof}
\begin{rem}[Coefficients]
In terms of matrices, by using Proposition \ref{prop_unstable_diff_operators}, the expression of $\ast$ is encoded in the product of differential operators.
\end{rem}

The spaces $\D(V),\Dh(V)$ are not stable under $\sqcup$. The time-ordered product $::$ is defined on differential operators by:
\begin{equation*}
:\tb^{\mu_1^+}\partial_{\mu_1^-}\tb^{\mu_2^+}\partial_{\mu_2^-}:~=~\tb^{\mu_1^++\mu_2^+}\partial_{\mu_1^-+\mu_2^-}.
\end{equation*}
We have the following formula:
\begin{equation*}
(A_1\sqcup A_2)\sqcup \id=~:(K_1\sqcup id)(K_2\sqcup \id):.
\end{equation*}
\begin{rem}
Finally, if we assume that $\varphi_0^* K=0$. We can see that $K\sqcup id$  defines an operator on the projective limit $\hat{S}_{\infty}(V)$. Moreover, if $K_1,K_2$ satisfy the last assumption, then we have $\varphi_0^*\circ( K_1\ast K_2)=0.$ 
\end{rem}

\subsection{Space of directed surfaces and acyclic stable graphs}
\label{subsection_alg_dir_surf}
\paragraph{Symmetric algebra and directed surfaces:}
\label{paragraph_alg_dir_surface}
Let $\M^{\s}$ be the vector space over $\Q$ generated by isomorphism classes $\Mol$ of directed stable surfaces with labeled boundaries. We consider $\M$ the vector space generated by surfaces that can have connected components isomorphic to unstable cylinders of type $(0,1,1)$. By convention, we also introduce the empty surface $\emptyset$. For each $\Mol$, we denote $e_{\Mol}\in \M$ the corresponding vector, $\M$ contains the subspace $\M^{\cf}$ generated by connected surfaces, and $\M^{ \cf\s}=\M^{\cf}\cap \M^{\s}$.
There are three natural graduations on $\M$; the first is the opposite of the Euler characteristic denoted by $d$. The two other graduations $n^+,n^-$ are the numbers of positive and negative boundary components. For each $d,n^+,n^-$, the space $\M^{d}_{n^+,n^-}$ is finite dimensional, we can consider the completion $\hat{\M}$ of $\M$ for these three graduations. The group $\mathfrak{S}_{n^+,n^-}=\mathfrak{S}_{n^+}\times \mathfrak{S}_{n^-}$ acts on $\M_{n^+,n^-}$ by permuting the labels. As we see in Remark \ref{remark_label_1}, a directed labeled surface defines a pair of partitions of the positive and negative boundary components $(I^+,I^-)$, and the group $\mathfrak{S}_{n^+,n^-}$ acts on them. We denote $S_{n^+,n^-}(\M)$ the subspace of invariant elements under this action, and we define the symmetric space:
\begin{equation*}
S(\M)=\bigoplus_{n^+,n^-} S_{n^+,n^-}(\M).
\end{equation*}
It corresponds to the free module generated by surfaces with unlabeled boundaries (i.e., elements in $\bordo$). If $\Mo\in \bordo$, we denote $e_{\Mo}$ as the corresponding element of $S(\M)$, which is the sum of all possible ways $\Mol$ to label the boundary components of $\Mo$. Connected surfaces are fixed by the action of the symmetric group, we have $\M^{\cf}\subset S(\M)$, for a surface of type $(g,n^+,n^-)$ we can denote $e_{g,n^+,n^-}$ the corresponding vector. It is still possible to consider the completion of $S(\M)$ for the graduations $d,n^+,n^-$, we denote it $\hat{S}(\M)$.
\begin{rem}[Time inversion]
There is a natural involution that acts by reversing the sign of the boundaries
\begin{equation*}
\iota: \M_{n^+,n^-}\longrightarrow \M_{n^-,n^+},
\end{equation*}
it preserves the Euler characteristic. This induces an involution on $\M,S(\M),\hat{S}(\M)...$
\end{rem}
\paragraph{Disjoint union:}
\label{paragraph_union_surface}
There is a product on $\M$ given by the disjoint union of directed surfaces. Let ${\Mol}_1,{\Mol}_2\in \bordol$ be two directed surfaces, by abuse of notations, we denote ${\Mol}_1\otimes {\Mol}_2 \in \bordol$ the surface obtained by taking the disjoint union and enumerating the boundaries by starting with the ones of ${\Mol}_1$. We define:
\begin{equation*}
e_{{\Mol}_1}\sqcup e_{{\Mol}_2}=\frac{1}{n_1^+!n_1^-!n_2^+!n_2^-!}\sum_{\sigma=(\sigma^+,\sigma^-)}\sigma\cdot (e_{{\Mol}_1\otimes {\Mol}_2}),
\end{equation*}
 we sum over the group $\mathfrak{S}_{n^+_1+n_2^+,n^-_1+n_2^-}$. As in the case of symmetric algebra, the disjoint union  $S(\M)\otimes S(\M) \longrightarrow S(\M)$ is associative, commutative and linear; then $(S(\M),+,\sqcup)$ is a commutative algebra. It is also additive for the graduations and then extends to the completion. The space $S_{0,0}(\M)$ is one dimensional, the generator is the empty surface $e_\emptyset$ and corresponds to the unit for $\sqcup$. The disjoint union $\sqcup$ also defines a monoidal structure on $\bordot$ moreover, it is compatible with the union on $S(\M)$. If for all $\Mo\in \bordo$ and for all $\Mo_1\in \bordoct$ we denote $\mu_{\Mo}(\Mo_1)$ the number of connected components of $\Mo$ isomorphic to $\Mo_1$, this defines an element in $\mathcal{P}(\bordoct)$. For all $\Mo_1,\Mo_2\in \bordot$, we have the analog of Formula \ref{formula_disjoint_union_vector}
\begin{equation*}
e_{\Mo_1}\sqcup e_{\Mo_2} = \binom{\nu_{\Mo_1}+\nu_{\Mo_2}}{\nu_{\Mo_1}~\nu_{\Mo_2}}e_{\Mo_1\sqcup \Mo_2}.
\end{equation*}
Then, similarly to the case of the Fock space, the algebra $(S(\M),\sqcup,+)$ is the free commutative algebra generated by the connected surfaces in $\bordoct$. In particular, we have the following formula that relates connected and disconnected surfaces:
\begin{equation}
\label{formula_exp_surface_sqcup}
 \sum_{\Mo\in \bordot}e_{\Mo}=\exp_{\sqcup}\left(\sum_{\Mo\in \bordoc}e_{\Mo}\right).
\end{equation}

\paragraph{Acyclic stable graphs:}
\label{paragraph_algebra_acyclic}
In some sense we want to define composition in $\hat{S}(\M)$ that should be related to composition of operators. To do that we consider the vector space $\A$ generated by elements $\Go$ in $\acyclolp$ the set of isomorphism class of labeled acyclic stable graph with a finite number of components isomorphic to cylinders (see Figure \ref{figure_compo_stable_graph}). As before, the symmetric group acts on $\A$, and we denote $S(\A)$ the subspace of symmetric elements. It is the vector space generated by the vectors $e_{\Go}$ for $\Go \in \acyclop$. The disjoint union $\sqcup$ is also well defined, and $S(\A)$ is the free commutative algebra generated by the connected acyclic directed stable graphs and the cylinder. \\
The composition ${\Gol}_1\cdot {\Gol}_2$ of two directed stable graphs is defined by gluing the negative boundary components of ${\Gol}_1$ to the positive components of ${\Gol}_2$. We forget the curves in this new graph that are boundaries of cylinders (see Figure \ref{figure_compo_stable_graph}). As we see in \cite{barazer2021cuttingorientableribbongraphs}, the result is always an acyclic stable graph, this define a binary operation:
\begin{equation*}
\acyclolp \times \acyclolp \longrightarrow \acyclolp.
\end{equation*}
This operation is associative and defines a structure of algebra on $\A$ by
\begin{equation*}
 e_{{\Gol}_1}\cdot e_{{\Gol}_2}=\frac{e_{{\Gol}_1\cdot {\Gol}_2}}{k!},
\end{equation*}
where $k=n^-({\Gol}_1)=n^+({\Gol}_2)$ is the number of boundaries glued. Contrary to the union, the composition is not commutative; moreover, it is not distributive for $\sqcup$, and then $(S(\A),\cdot,\sqcup)$ is not a ring. But $S(\A)$ admit two structures of algebra.
\begin{figure}
\centering
\includegraphics[height=3cm]{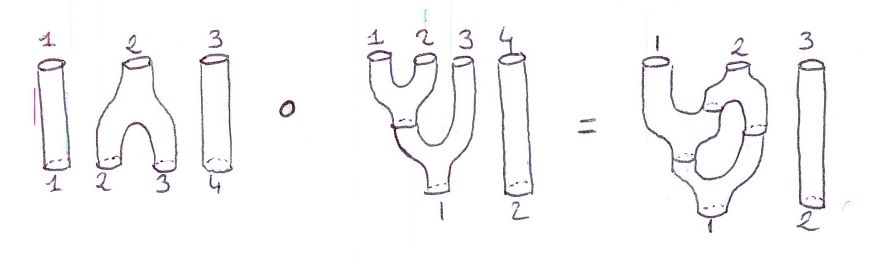}
\caption{Composition of two acyclic stable graphs.}
\label{figure_compo_stable_graph}
\end{figure}

\paragraph{Projective limit:}
\label{paragraph_proj_lim_cyl}
We can forget the number of cylinders by using projective limit, as in the case of Fock space. Let $\varphi_{0,1,1}^*:S_{n^++1,n^-+1}(\A)\to S_{n^+,n^-}(\A)$ be the annihilation operator that removes a cylinder $e_{0,1,1}$, defined by analogy to the Fock space. In this way we obtain a projective system and we denote $S_\infty(\A)$ the projective limit. As a vector space, $S_{\infty}(\A)$ is isomorphic to the space $S(\A^{\s})$ generated by elements in $\acyclo$. If $\Go\in \acyclo$, it is represented by the vector
\begin{equation*}
\eb_{\Go}=e_{\Go}\sqcup \exp_{\sqcup}(e_{0,1,1}).
\end{equation*}
The interpretation is the following: we add to a graph an arbitrary number of cylinders, and as we see later, this allows more possibilities of gluing's when we compose stable graphs. As in the case of operators on Fock spaces, we have the following proposition:
\begin{prop}
For all $\Go_1,\Go_2\in \acyclo$, there is a unique $e_{\Go_1}\ast e_{\Go_2}$ in $S(\A^\s)$ such that:
\begin{equation*}
(e_{\Go_1}\ast e_{\Go_1}) \sqcup \exp_{\sqcup}(e_{0,1,1})=\eb_{\Go_1} \circ \eb_{\Go_2}.
 \end{equation*}
\end{prop}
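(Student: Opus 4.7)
The plan is to mirror the earlier proposition stating that $K_1 \ast K_2$ is characterized by $(K_1 \sqcup \id)\circ (K_2 \sqcup \id) = (K_1\ast K_2)\sqcup \id$, using the fact that $\exp_{\sqcup}(e_{0,1,1})$ plays the role for graph composition that $\id$ plays for operator composition. The argument splits cleanly into existence and uniqueness.

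For existence, I would first check that $\eb_{\Go_1}\circ \eb_{\Go_2}$ is a well-defined element of the completion $\hat{S}(\A)$ (for the Euler-characteristic and boundary-count gradings), since only finitely many pairs $(k_1,k_2)$ in the expansions $\eb_{\Go_i} = \sum_k \frac{1}{k!}\, e_{\Go_i}\sqcup e_{0,1,1}^{\sqcup k}$ contribute to each graded piece. Next, I would decompose each term $(e_{\Go_1}\sqcup e_{0,1,1}^{\sqcup k_1}) \circ (e_{\Go_2}\sqcup e_{0,1,1}^{\sqcup k_2})$ by classifying each gluing of a negative boundary of the left factor with a positive boundary of the right factor into one of four types: genuine-to-genuine, genuine-of-$\Go_1$ to cylinder, cylinder to genuine-of-$\Go_2$, and cylinder-to-cylinder. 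By the convention of erasing cylinder-boundary curves after composition, the three cylinder-involving types either pass a boundary of $\Go_i$ through unchanged (absorbing one cylinder) or concatenate two cylinders into a single one.

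Then I would collect terms. The genuine-to-genuine gluings assemble into an element $X\in S(\A^\s)$ depending only on $\Go_1$ and $\Go_2$, while the surviving cylinders form a pure-cylinder factor disjoint-unioned on top. The combinatorial bookkeeping---the $1/(k_1!k_2!)$ prefactors, the $k!$ in the symmetrized composition formula, and the multinomial coefficients from Formula \ref{formula_disjoint_union_vector}---is the crux of the argument. The identity to verify is that summing over all distributions of the $k_1+k_2$ cylinders among the three sinks (absorbed into a negative boundary of $\Go_1$, absorbed into a positive boundary of $\Go_2$, or paired to form a new cylinder), and then over $(k_1,k_2)$, rebuilds exactly the factor $\exp_{\sqcup}(e_{0,1,1})$. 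After using $e_{0,1,1}^{\sqcup k} = k!\, e_{k(0,1,1)}$, this reduces to standard manipulations of exponential generating series. Setting $e_{\Go_1}\ast e_{\Go_2} := X$ gives existence.

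For uniqueness, I would note that $\exp_{\sqcup}(e_{0,1,1})$ has leading term $e_\emptyset$---the unit of $\sqcup$---in the filtration by number of cylinder components, and is therefore invertible in $\hat{S}(\A)$ with inverse $\exp_{\sqcup}(-e_{0,1,1})$. Multiplication by a unit is injective, so $X$ is uniquely determined by the defining relation, which also forces $X\in S(\A^\s)$ by matching graded pieces. The main obstacle is the combinatorial identity in the existence step; once that is carried out, the rest is formal.
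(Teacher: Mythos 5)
Your proposal is correct and matches the mechanism the paper itself leaves implicit: the proposition is stated without proof, but Figure \ref{figure_compo_A_infty} and the remark that adding arbitrarily many cylinders ``allows all possible gluings'' encode exactly your four-type classification of gluings and the resulting sum over partial matchings of $\partial^-\Go_1$ with $\partial^+\Go_2$. The one step you defer---the exponential-generating-series bookkeeping showing that the unabsorbed and concatenated cylinders reassemble into the factor $\exp_{\sqcup}(e_{0,1,1})$---does go through (for each graded piece only finitely many $(k_1,k_2)$ contribute, as you note), and your uniqueness argument via invertibility of $\exp_{\sqcup}(e_{0,1,1})$ in $(\hat{S}(\A),\sqcup)$ is sound.
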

Then the composition induces a product structure on $S_{\infty}(\A)$. The result of the composition of two graphs is given in Figure \ref{figure_compo_A_infty}, to contrast with the usual composition. If we add an arbitrary number of cylinders, we allow all possible gluing.
\begin{figure}
\centering
\includegraphics[width=15cm]{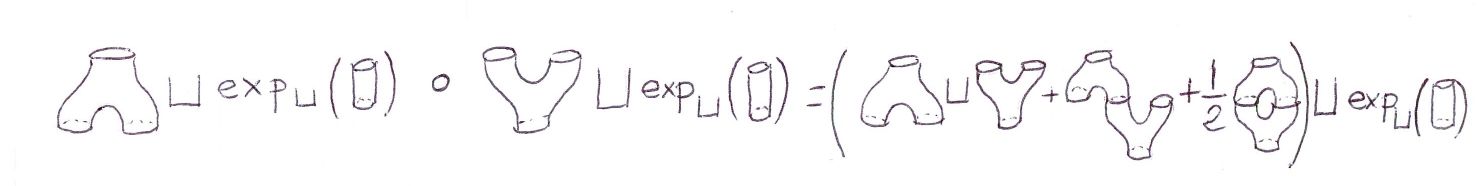}
\caption{Composition of two graphs in $S_{\infty}(\A)$.}
\label{figure_compo_A_infty}
\end{figure}

\paragraph{Exponential structure for the generating series of acyclic graphs:}
 \label{paragraph_exponential_structure}
We give some formulas relating the exponential of elements in $S_{\infty}(\A)$ for the composition to generating series of acyclic stable graphs. For each $\Go$, we denote $n_{\Go}$ as the number of linear orders on $\Go$.
\begin{prop}
\label{prop_exp_A}
In $\hat{S}_\infty(\A)$, we have the following formula:
\begin{equation}
\label{formula_exp_general}
\exp\left( \sum_{g,n^+,n^-} \eb_{g,n^+,n^-}q^{2g-2+n^++n^-}\right)=\sum_{\Go\in\acyclo}\frac{n_{\Go}q^{d(\Go)}}{\#X_0\Go!~\#\Aut(\Go)}\eb_{\Go},
\end{equation}
The sum in the $LHS$ run over the set of connected stable surfaces and sums on the $RHS$ runs over the set of acyclic stable graphs, possibly empty.
\end{prop}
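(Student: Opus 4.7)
I would expand the exponential as $\exp(X) = \sum_{k \geq 0} X^{\ast k}/k!$ in $\hat{S}_\infty(\A)$, where
$X = \sum_{g,n^+,n^-} q^{2g-2+n^++n^-} \eb_{g,n^+,n^-}$
is the generating series of connected stable directed surfaces, regarded as single-vertex acyclic stable graphs. The overall strategy is to interpret $X^{\ast k}$ combinatorially as a sum indexed by acyclic stable graphs equipped with an ordering of their vertices, and then to regroup terms by the underlying unordered graph.

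First, I would expand $X^{\ast k}$ by multilinearity as a sum over ordered $k$-tuples $(\Mo_1,\ldots,\Mo_k)$ of connected stable directed surfaces, weighted by $q^{\sum d(\Mo_i)}$. Using the description of the $\ast$-product in $\hat{S}_\infty(\A)$ from Paragraph \ref{paragraph_proj_lim_cyl}---composition of stable graphs with arbitrarily many cylinders inserted along matched boundaries---the product $\eb_{\Mo_1} \ast \cdots \ast \eb_{\Mo_k}$ enumerates the acyclic stable graphs $\Go \in \acyclo$ obtained by gluing the $\Mo_i$ sequentially from top to bottom. Equivalently, each term corresponds to a pair $(\Go,\sigma)$ consisting of $\Go \in \acyclo$ with $\#X_0\Go = k$ together with a linear extension $\sigma$ of the partial order on $X_0\Go$ whose $i$-th vertex has type $\Mo_i$.

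Next, I would reorganize the sum according to the underlying unordered graph $\Go$. For fixed $\Go$, the number of linear extensions of its partial order is by definition $n_\Go$, while passing from vertex-labeled configurations to the symmetric representative $\eb_\Go \in S_\infty(\A)$ introduces the factor $1/\#\Aut(\Go)$ via an orbit-stabilizer argument for the $\Aut(\Go)$-action on the set of vertex-labeled decompositions. Combined with the $1/k! = 1/\#X_0\Go!$ from the exponential expansion, the coefficient of $\eb_\Go$ is exactly
$n_\Go q^{d(\Go)}/(\#X_0\Go! \cdot \#\Aut(\Go))$,
which matches the right-hand side of Formula \ref{formula_exp_general}.

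The step I expect to be the main obstacle is the careful symmetry accounting in the second step, namely the emergence of the $1/\#\Aut(\Go)$ factor. One must rigorously track the $\Aut(\Go)$-action on ordered vertex decompositions and gluing data, and verify via orbit-stabilizer that collecting these contributions into a single basis element $\eb_\Go$ of the symmetric algebra yields exactly the automorphism correction claimed. The remaining manipulations are formal generating-function bookkeeping; the definition $\eb_\Go = e_\Go \sqcup \exp_\sqcup(e_{0,1,1})$ is tailored so that the arbitrary cylinder insertions produced by the $\ast$-product in the projective limit are absorbed cleanly into the RHS.
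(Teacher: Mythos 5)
Your proposal is correct and follows essentially the same route the paper intends: the paper itself only sketches the argument ("the exponential means that we consider all the ways to compose directed surfaces together\dots{} We don't give the details here"), invoking exactly the identification of ordered compositions with acyclic graphs equipped with gluing data and the impossibility of creating cycles. Your accounting of the coefficient --- $1/k!$ from the exponential, $n_{\Go}$ linear extensions of the partial order on $X_0\Go$, and $1/\#\Aut(\Go)$ from orbit--stabilizer on the rigidified (vertex-ordered, boundary-labeled) gluing data --- matches the stated factor $n_{\Go}/(\#X_0\Go!\,\#\Aut(\Go))$ and in fact supplies more detail than the paper does.
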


The exponential means that we consider all the ways to compose directed surfaces together. It is then quite natural that we obtain, in this way, all the possible acyclic directed stable graphs; the composition of surfaces cannot create cycles in the graph. The proof of these formulas uses the fact that directed graphs are encoded by a family of surfaces with permutations to define the gluings. We don't give the details here. In particular, the exponential of
\begin{equation*}
P=\eb_{0,1,2}+\eb_{0,2,1},
\end{equation*}
corresponds to acyclic pants decompositions:
\begin{equation}
\label{formula_exp_pants}
\exp(qP)=\sum_{\Go\in\acyclogen}\frac{n_{\Go}q^{d(\Go)}}{d(\Go)!~\#\Aut(\Go)}\eb_{\Go}.
\end{equation}
Where $\acyclogen$ is the set of directed acyclic stable graphs such that each component is either of type $(0,1,2)$ or $(0,2,1)$. We illustrate in Figure \ref{figure_exp_A} the first terms of this formula.
\begin{figure}
 \centering
\includegraphics[width=12cm]{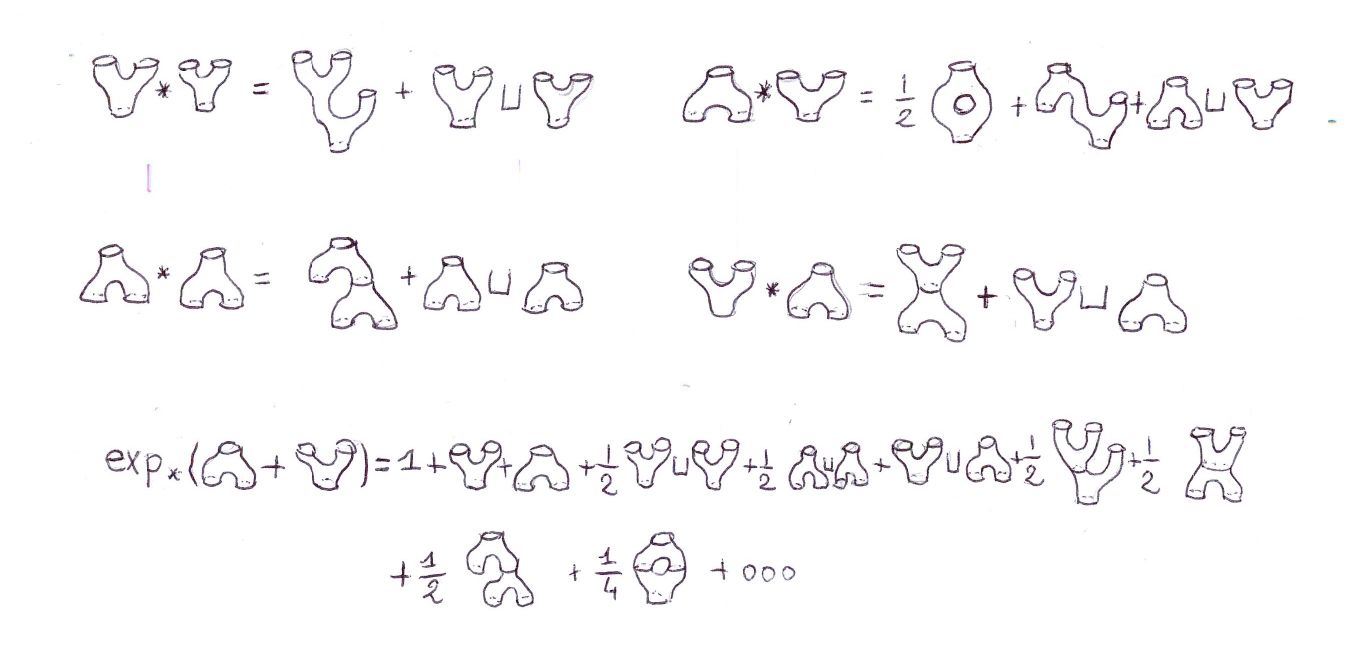}
\caption{First terms of the expansion of $\exp(qP)$ in the variable $q$.}
\label{figure_exp_A}
\end{figure}

\paragraph{Representations:}
\label{paragraph_representation}
Let $V$ as in Subsection \ref{subsection_operator_boson}, we consider a family of linear operators
\begin{equation*}
K_{g,n^+,n^-} : \hat{S}_{n^-}(V) \to \hat{S}_{n^+}(V).
\end{equation*}
We assume that $K_{g,n^+,n^-}$ is homogeneous of degree $d_{g,n^+,n^-}$. As $S(\M)$ is the free algebra generated by connected surfaces, we can define a unique linear morphism
\begin{equation*}
\Kb: S(\M)\longrightarrow \End(\hat{S}(V)),
\end{equation*}
Which preserves the three graduations $(d,n^+,n^-)$, satisfies $\Kb(x\sqcup y)=\Kb(x)\sqcup \Kb(y)$ and $\Kb(e_{g,n^+,n^-})=K_{g,n^+,n^-} $.
If we assume that $K_{0,1,1}=\pr_1$ is the identity of $V$, then we can extend $K$ to $S_{\infty}(\M)$. We can ask if it is possible to obtain a representation of $S_{\infty}(A)$.
\begin{prop}
For each $K$ as bellow, there exists a unique linear map:
\begin{equation*}
K : S_{\infty}(\A)\longrightarrow \Dh(V),
\end{equation*}
that extends $K$ on $\M$, and satisfies: 
\begin{equation*}
K(e_{\Go_1}\sqcup e_{\Go_2})=K(e_{\Go_1})\sqcup K(e_{\Go_2}),~~~\text{and}~~~K(e_{\Go_1}\cdot e_{\Go_1})=K(e_{\Go_1})\circ K(e_{\Go_2})
\end{equation*}
\end{prop}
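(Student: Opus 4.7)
The plan is to build $\Kb$ in two stages and then verify well-definedness.

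\textbf{Step 1 (Extension to $S_\infty(\M)$).} Since $S(\M)$ is the free commutative $\sqcup$-algebra generated by the connected directed surfaces in $\bordoc$, the prescribed family $(K_{g,n^+,n^-})$ extends uniquely to a $\sqcup$-morphism $\Kb: S(\M)\to \End(\hat{S}(V))$ preserving the triple graduation $(d,n^+,n^-)$. Because $K_{0,1,1}=\pr_1$, the construction $K\mapsto K\sqcup\id$ of Proposition \ref{prop_unstable_diff_operators} lifts this canonically to a map $\Kb: S_\infty(\M)\to \hat{\D}(V)$. This step is essentially formal and only uses the free-algebra structure.

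\textbf{Step 2 (Definition on $S_\infty(\A)$ via linear orders).} Any $\Go\in\acyclo$ admits at least one linear order $v_1<v_2<\cdots<v_r$ on $X_0\Go$ compatible with the acyclic partial order. Reading $\Go$ from bottom to top, we can write
\begin{equation*}
e_\Go = e_{\hat v_r}\cdot e_{\hat v_{r-1}}\cdot\,\cdots\,\cdot e_{\hat v_1}\quad\text{in}\quad S_\infty(\A),
\end{equation*}
where each $e_{\hat v_i}$ is the one-vertex directed surface $\Mo(v_i)$ together with the cylinders in $S_\infty(\A)$ needed to carry through the boundaries of $v_1,\dots,v_{i-1}$ not yet matched. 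I then set
\begin{equation*}
\Kb(e_\Go):=\Kb(e_{\hat v_r})\circ\Kb(e_{\hat v_{r-1}})\circ\cdots\circ \Kb(e_{\hat v_1}),
\end{equation*}
using the map from Step 1. Extend by linearity to $S(\A^\s)\simeq S_\infty(\A)$.

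\textbf{Step 3 (Well-definedness, the main obstacle).} The crucial point is that $\Kb(e_\Go)$ does not depend on the chosen linear extension. By the standard fact that any two linear extensions of a finite poset are connected by a sequence of transpositions of adjacent incomparable elements, it suffices to check invariance under such a swap. If $v_i$ and $v_{i+1}$ are incomparable in the acyclic order, then no edge of $\Go$ joins them, so the associated surfaces $e_{\hat v_i},e_{\hat v_{i+1}}$ (after inserting cylinders) involve disjoint sets of positive and negative boundaries. Consequently
\begin{equation*}
\Kb(e_{\hat v_{i+1}})\circ\Kb(e_{\hat v_i})=\Kb(e_{\hat v_{i+1}})\sqcup\Kb(e_{\hat v_i})=\Kb(e_{\hat v_i})\sqcup\Kb(e_{\hat v_{i+1}})=\Kb(e_{\hat v_i})\circ\Kb(e_{\hat v_{i+1}}),
\end{equation*}
using commutativity of $\sqcup$ (Subsection \ref{subsection_operator_boson}) and the fact that $\circ$ coincides with $\sqcup$ for operators acting on disjoint tensor factors. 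This proves invariance.

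\textbf{Step 4 (Multiplicativities and uniqueness).} Compatibility with $\sqcup$ is automatic: a linear order on $\Go_1\sqcup\Go_2$ restricts to orders on each summand, and the operator composition factors as a $\sqcup$-product by the disjointness argument above. Compatibility with $\cdot$ is obtained by choosing, on $\Go_1\cdot\Go_2$, a linear order in which all vertices of $\Go_2$ come before all vertices of $\Go_1$ (permitted since edges of the composition point from $\Go_2$ to $\Go_1$); the defining formula then splits as $\Kb(e_{\Go_1})\circ\Kb(e_{\Go_2})$. Uniqueness is immediate: the two multiplicativity properties and the values on $\M$ force $\Kb$ on every connected acyclic stable graph through its vertices, hence on all of $S_\infty(\A)$.
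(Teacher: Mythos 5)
Your proposal is correct in substance but takes a genuinely different route from the paper's. The paper does not define $\Kb(e_{\Go})$ by choosing a linear order: it writes down an explicit closed formula for the matrix coefficients $K_{\Go}[\alphab^+,\alphab^-]$ as a sum over decorations $\alphab\in A^{X_1\Go}$ of the internal edges, with one factor $K_{\Go(c)}[\cdot,\cdot]$ per vertex; multiplicativity for $\cdot$ is then a one-line matrix-product identity (composing two such sums amounts to summing over the decorations of the newly created edges), and membership in $\End(\hat{T}(V))$ (finite rows) is argued from acyclicity. The factorization $\Gol=\tilde{\Gol}(r)\cdots\tilde{\Gol}(1)$ through a linear order appears in the paper only at the very end, to prove uniqueness. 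You instead take that factorization as the \emph{definition}, which forces you to prove independence of the chosen linear extension; your reduction to adjacent transpositions of incomparable vertices is the standard and correct way to do this, and the swap identity holds because the two vertices share no edge, so the two labelled operators act on disjoint tensor factors, each acting as the identity (a cylinder) on the other's factors. One point deserves care in your Step 3: the identity ``composition equals disjoint union'' is valid only for the operators attached to the finitely-cylindered labelled graphs $e_{\hat v_i}$, where the labelled composition realizes exactly the edges of $\Go$ and nothing more; it would fail for the completed operators $\Kb(\eb_{\Go})=K_{\Go}\sqcup \id$, whose composition produces all additional gluings as contraction terms of the corresponding differential operators. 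Your route buys a cheaper finiteness argument ($\Kb(e_{\Go})\in\Dh(V)$ is automatic since finite-row matrices are stable under composition), while the paper's explicit formula makes the $\cdot$-multiplicativity immediate and is the form of $K_{\Go}$ actually used elsewhere in the text.
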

In particular, we have $\Kb(\exp_{\sqcup}(e_{0,1,1}))=\exp_{\sqcup}(K_{0,1,1})\exp_{\sqcup}(\pr_1)=\id$; then,for each $(g,n^+,n^-)$ the operator:
\begin{equation*}
    \Kb(\eb_{g,n^+,n^-})=K_{g,n^+,n^-}\sqcup \id,
\end{equation*}
is a differential operator.

\begin{proof}
We prove that we can extend the representation to a morphism
\begin{equation*}
\A\longrightarrow \End(\hat{S}(V)).
\end{equation*}
We can found an explicit formula for $K_{\Go}$. We fix $\Go$, for each $c\in X_0\Go$ there is a projection:
\begin{equation*}
L^\pm_c: A^{X_1\Go} \longrightarrow A^{n^\pm(c)}
\end{equation*}
We also have projection $L^\pm: A^{X_1\Go} \longrightarrow A^{\partial^{\pm}\Go}$.
For each $\alphab^{\pm}\in A^{\partial^{\pm}\Go}$ let
\begin{equation*}
\Lambda_{\Go,A}(\alphab^+,\alphab^-)=\{\alphab\in A^{X_1\Go}~|~L^{+}(\alphab)=\alphab^+~,~L^{-}(\alphab)=\alphab^-\}.
\end{equation*}
Then we set
\begin{equation*}
K_{\Go}[\alphab^+,\alphab^-]=\frac{1}{n-(\Go)!}\sum_{\alphab\in \Lambda_{\Go,A}(\alphab^+,\alphab^-)}\prod_c \frac{K_{\Go(c)}'[L^+_c(\alphab),L^-_c(\alphab)]}{n^-(\Go(c))!}.
\end{equation*}
According to the fact that the operators $K_{g,n^+,n^-}$ are symmetric and are defined on $\hat{S}(V)$,  we can see that the last sum is indeed finite. Moreover, we can see that for each $\alphab^{+}$, the number of $\alphab^-$ such that $ K_{\Go}[\alphab^+,\alphab^-]$ is non-zero is actually finite. This is due to the fact that this property is true for each $K_{g(c),n^+(c),n^-(c)}$ and $\Go$ is acyclic. Then $K_{\Go}$ is a matrix with finite rows and defines an operator in $\End(\hat{T}(V))$. By construction, we can see that
\begin{equation*}
K_{\Go_1\cdot \Go_2}[\alphab^+,\alphab^-]=\sum_\beta K_{\Go_1}[\alphab^+,\beta] K_{\Go_2}[\beta,\alphab^-].
\end{equation*}
And then we obtain a morphism for the composition. Now if $\Gol$ is a labeled graph with a linear order for each $i$, we can add a sufficient number of cylinders to $\Go(i)$ to obtain a new graph $\tilde{\Go}(i)$ and find a label $\tilde{\Gol}(i)$ of $\tilde{\Go}(i)$ to obtain the factorization:
\begin{equation*}
\Gol=\tilde{\Gol}(r)\cdot ...\cdot\tilde{\Gol}(1),
\end{equation*}
with $r=\#X_0\Go$. Then the algebra $\A$ is generated by $\M$, and this allows us to obtain the uniqueness.
\end{proof}
\section{Operators associated to directed ribbon graphs}
\label{section_operator_volumes}
In this part, we construct a family of integral operators $K_{g,n^+,n^-}$ with kernels $V_{g,n^+,n^-}$ and we study their elementary properties. We uses the notations of Section \ref{section_background} and set $V=\Q[L]$ the space of polynomials, these operators are defined on the formal Fock space $\hat{S}(V)$, which is identified with the space $\Q[[\tb]]$ of series with a countable number of variables $\tb=(t_0,t_1,\dots)$. We start by recalling some result about directed ribbon graphs and volumes of their moduli spaces, after we state Proposition \ref{lem_transfert_lemma} that ensure the consistency of the operators.

\subsection{Directed ribbon graph and volumes of moduli spaces}
\label{subesection_directed_ribbon_graphs}
\paragraph{Definition:}
We refer to \cite{barazer2021cuttingorientableribbongraphs} for a more precise definition; a ribbon graph $R$ is defined by a set of half edges $XR$ and a triple of permutations $(s_0,s_1,s_2)$. We denote $X_iR$, the $i-$simplex of $R$, which corresponds to the orbits of $s_i$. $X_0R$ is the set of vertices, $X_1R$ the set of edges and $X_2R$ are the boundary components (or faces). A direction on $R$ is a map $\epsilon : XR \to \{\pm 1\}$ that satisfies 
\begin{equation*}
    \epsilon \circ s_0=-\epsilon,~~~\text{and}~~~ \epsilon \circ s_1=-\epsilon.
\end{equation*}
A directed ribbon graph $\Ro=(R,\epsilon)$ is a ribbon graph with a choice of direction.\\

A ribbon graph defines a surface $M_R\in \bord$ that is its topological realization, we have the identification $\partial M_R=X_2R$. A direction on $R$ is invariant under $s_2$ and then induces a non-constant map 
\begin{equation*}
    \epsilon : X_2R \to \{\pm 1\}.
\end{equation*}
This defines a direction $\Mo_{\Ro}$ on $M_R$. A directed ribbon graph is characterized by the fact that two adjascent boundaries have opposite signs; in other word, a ribbon graph is directed iff its dual is bipartite.\\

A metric on a ribbon graph $R$ is a map from the set of edges to $\Rpp$; we denote by $\Met(R)$ the cone of metrics on $R$. If $m\in \Met(R)$ and $\beta \in \partial M_R$ is a boundary component, it is possible to compute the length $\beta$ with respect to $m$ by summing the length of the edges that $\beta$ contains. It defines a function $l_\beta : \Met(R)\to \Rpp$ and we denote $L_{\partial}=(l_\beta)_{\beta\in \partial M_R}$. If $\Ro$ is directed, the biparticity implies that 
\begin{equation*}
    \sum_\beta \epsilon(\beta) l_\beta(m)= 0,
\end{equation*}
then $L_{\partial}$ takes its values in $\Lambda_{\Mo_{\Ro}}$. Usually, the directed ribbon graphs that we consider have labeled boundary components, and we denote $\beta_i^+$ (resp. $\beta_i^-$) the $i-$th positive (resp. negative) boundary components. We will also use the notation $\alpha_i^{\pm}(\Ro)$ the number of edges contained in $\beta_i^\pm$; we can form $\alphab^{\pm}(\Ro)=(\alpha_1^{+}(\Ro),...)$, which is a multi-index. Using the biparticity we also have the relation 
\begin{equation*}
  d(\alphab^{+}(\Ro)) = \sum_i \alpha_i^{+}(\Ro)=\sum_i \alpha_i^{-}(\Ro)= d(\alphab^{-}(\Ro)).
\end{equation*}
Let $e$ be an edge, we also denote $[e]^{+}\in \llbracket 1,n^{+}\rrbracket$ (resp. $[e]^-$) the positive (resp. negative) boundary that contain $e$.
\paragraph{Moduli spaces:}
Let $\Mo$ be a directed surface we denote $\rib(\Mo)$ the isomorphism classes of directed ribbon graphs embedded in $\Mo$. It corresponds to the pairs $(\Ro,\phi)$ where $\phi: \Mo_{\Ro}\to \Mo$ is a homeomorphism that preserves the direction of the boundaries components; two pairs are considered to be equivalent up to the action of the group of homeomorphisms of $M$ that fix each boundary component. The combinatorial moduli space $\Mc(\Mo)$ is then 
\begin{equation*}
    \Mc(\Mo)= \sqcup_{\Ro} \Met(\Ro)/\Aut(\Ro).
\end{equation*}
It is a cell complex and the boundary length induces a map $\Lbord :  \Mc(\Mo) \to \Lambda_{\Mo},$. For each $L\in \Lambda_{\Mo}$ we denote $\Mc(\Mo,L)= \Lbord^{-1}(L)$ the level set. From the results of \cite{barazer2021cuttingorientableribbongraphs} we have the following theorem:
\begin{thm}
    There are natural Lebesgue measures $d\mu_{\Mo},d\sigma_{\Mo},d\mu_{\Mo}(L)$ on $\Mc(\Mo),\Lambda_{\Mo}$ and $\Mc(\Mo,L)$. Moreover, the measure $d\mu_{\Mo}(L)$ on $\Mc(\Mo,L)$ is the conditional measure of $d\mu_{\Mo}$ with respect to $d\sigma_{\Mo}$.  The volume $V_{\Mo}(L)$ of the fiber $\Mc(\Mo,L)$ is a homogeneous piece-wise polynomial of degree $2d(\Mo)-n^+(\Mo)-n^-(\Mo)+1$.
\end{thm}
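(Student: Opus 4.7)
The plan is to build everything cell by cell and then invoke disintegration. On each stratum $\Met(\Ro) \cong \Rpp^{X_1\Ro}$ of the cellular decomposition of $\Mc(\Mo)$ I take the standard Lebesgue measure $dm_\Ro$ normalized by the integer lattice $\Z^{X_1\Ro}$; after quotienting by $\#\Aut(\Ro)$ and checking compatibility across codimension-one faces (where a single edge shrinks to zero), these glue into the measure $d\mu_\Mo$ on $\Mc(\Mo)$. On $\Lambda_\Mo$ I take $d\sigma_\Mo$ to be the Lebesgue measure of the hyperplane $\sum_\beta \epsilon(\beta) l_\beta = 0$ normalized by the induced sublattice of $\Z^{\partial M}$. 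Because $\Lbord$ is \emph{linear} on each $\Met(\Ro)$, the conditional measure $d\mu_\Mo(L)$ on $\Mc(\Mo,L)$ is obtained stratum by stratum by restricting $dm_\Ro$ to the affine fiber $\Lbord^{-1}(L)$ and gluing; the disintegration identity then reduces to the corresponding identity for each of these linear maps, which is standard.

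For the piecewise-polynomial claim I would argue again stratum by stratum. The intersection $\Met(\Ro) \cap \Lbord^{-1}(L)$ is a (relatively open) rational convex polytope depending affinely on $L$; its combinatorial type is constant on the chambers of a rational hyperplane arrangement in $\Lambda_\Mo$, and on each such chamber its volume is a polynomial in $L$ of degree equal to the fiber dimension --- this is the classical parametric-polytope (Brion--Vergne) volume formula. Summing these polynomials over the contributing $\Ro$, divided by $\#\Aut(\Ro)$, produces $V_\Mo(L)$ as a continuous piecewise polynomial. That only finitely many $\Ro$ contribute --- equivalently, that the total volume is finite --- is exactly where directedness enters: the biparticity of the dual forces the cone of realizable metrics in each fiber to be bounded, a point worked out carefully in \cite{barazer2021cuttingorientableribbongraphs}.

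The degree count is a routine Euler-characteristic calculation. Four-valency gives $4|X_0\Ro| = 2|X_1\Ro|$, which combined with $|X_0\Ro| - |X_1\Ro| + (n^+ + n^-) = 2 - 2g$ yields $|X_1\Ro| = 2(2g - 2 + n^+ + n^-) = 2d(\Mo)$. Since $\Lambda_\Mo$ is cut out of $\R^{\partial M}$ by a single linear relation (taking $\Mo$ connected), $\dim \Lambda_\Mo = n^+ + n^- - 1$, so the fiber dimension is $2d(\Mo) - n^+ - n^- + 1$, matching the asserted degree. The main obstacle is really the finiteness-and-compatibility part in the first paragraph; the polynomiality and the degree are then essentially formal consequences of linearity of $\Lbord$ on each cell.
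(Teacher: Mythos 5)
The paper does not actually prove this statement: it is imported verbatim from \cite{barazer2021cuttingorientableribbongraphs} (``From the results of \cite{barazer2021cuttingorientableribbongraphs} we have the following theorem''), so there is no in-paper argument to compare against. Your reconstruction is correct and follows the standard Kontsevich-style route one would expect that reference to take: lattice-normalized Lebesgue measure on each cell $\Met(\Ro)\cong\Rpp^{X_1\Ro}$, disintegration along the linear map $\Lbord$, parametric-polytope volumes on the chambers of a rational hyperplane arrangement, and the Euler-characteristic count $\#X_1\Ro=2d(\Mo)$ for four-valent graphs giving fiber dimension $2d(\Mo)-n^+-n^-+1$ (in the connected case, as you correctly flag). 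Two small remarks. First, the finiteness of the set of contributing $\Ro$ is automatic for fixed topological type and bounded valence, independently of directedness; what directedness (equivalently, biparticity of the dual) buys is the boundedness of each fiber polytope, via the fact that every edge lies on exactly one positive boundary component so that $l_e\le |L^+|$ --- you state both points but slightly misattribute which one needs the hypothesis. Second, the compatibility of the cell measures across codimension-one faces, which you identify as the main obstacle, is indeed the only step requiring genuine care, and it is exactly the part delegated to the cited reference; your sketch is consistent with how it is handled there.
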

We generally work with surfaces with labeled boundaries; if $\Mol$ is labeled of type, $(g,n^+,n^-)$ we prefer the notation $V_{g,n^+,n^-}$. In this case $\Lambda_{\Mo}\simeq \Lambda_{n^+,n^-}\subset \Rpp^{n^+}\times \Rpp^{n^-}$, then $V_{g,n^+,n^-}$ can be seen as a function of two variables $(L^+,L^-)$ and we use the notation $V_{g,n^+,n^-}(L^+|L^-)$.

\subsection{Transfer lemma}

We start by introducing some notations, for each $n\in \Npp$ let $\Delta_n$ be the standard simplex, for $t>0$ let $t\cdot \Delta_n$ the dilatation by $t$ and $d\sigma^t_n$ the   Lebesgue measure on $t\cdot \Delta_n$ normalized by $\frac{t^{n-1}}{(n-1)!}$. For a vector $L\in \Rp^n$ we also use $|L|=\sum_i L_i$. The goal of this section is to prove the following proposition:
\begin{prop}
\label{prop_transfert_lemma}
Let $(g,n^+,n^-)$ and $\alphab\in \N^{n^-}$ be a multi-index, then the integral:
\begin{equation*}
\prod_{i}L_i^+~\int_{L^-\in |L^+|\cdot\Delta_{n^-}}V_{g,n^+,n^-}(L^+|L^-)\prod_i (L_i^-)^{\alpha_i^-}d\sigma_{n^-}^{|L^+|},
\end{equation*}
is well defined for $L^+\in(\Rp)^{n^+}$ and is a homogeneous symmetric polynomial of degree $2d_{g,n^+,n^-}+d(\alphab)$.
\end{prop}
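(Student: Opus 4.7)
The plan is to exploit the bipartite decomposition of edges across positive boundaries, which is the content of biparticity of the dual. First I would write the volume $V_{g,n^+,n^-}(L^+|L^-)$ as the standard cell-wise sum
\begin{equation*}
V_{g,n^+,n^-}(L^+|L^-) = \sum_{\Ro} \frac{1}{\#\Aut(\Ro)} \mathrm{vol}\bigl(\Met(\Ro) \cap \Lbord^{-1}(L^+,L^-)\bigr),
\end{equation*}
where the sum runs over isomorphism classes of directed four-valent ribbon graphs $\Ro$ realizing the topological type $(g,n^+,n^-)$, and the cells are polytopes in the edge-length space $\R_{\ge 0}^{X_1\Ro}$ cut out by the linear relations $L^\pm_i = \sum_{e : [e]^\pm = i} m_e$. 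It then suffices to prove the polynomiality, homogeneity, and symmetry statements for each cell separately.

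The key move is to apply Fubini (via the conditional-measure property relating $d\mu_\Mo$, $d\mu_\Mo(L)$ and $d\sigma_\Mo$) to rewrite
\begin{equation*}
\prod_i L_i^+ \int_{L^- \in |L^+|\cdot \Delta_{n^-}} \mathrm{vol}\bigl(\Met(\Ro,L^+,L^-)\bigr) \prod_j (L_j^-)^{\alpha_j^-}\, d\sigma_{n^-}^{|L^+|}
\end{equation*}
as an integral, over the space of metrics on $\Ro$ with fixed positive boundary lengths $L^+$ and unconstrained negative boundary lengths, of the polynomial $\prod_j \bigl(\sum_{e:[e]^- = j} m_e\bigr)^{\alpha_j^-}$ in the edge lengths. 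The biparticity of the dual means that each edge $e$ belongs to exactly one positive boundary $[e]^+ \in \llbracket 1,n^+\rrbracket$, so the set of edges decomposes as a disjoint union $X_1\Ro = \bigsqcup_i E_i$ with $E_i = \{e : [e]^+ = i\}$. Consequently the fiber
\begin{equation*}
\bigl\{(m_e) \in \R_{\ge 0}^{X_1\Ro} : L^+(m) = L^+ \bigr\} \;=\; \prod_{i=1}^{n^+} \Bigl\{ (m_e)_{e \in E_i} \in \R_{\ge 0}^{E_i} : \sum_{e \in E_i} m_e = L_i^+ \Bigr\} \;=\; \prod_i L_i^+\cdot \Delta_{E_i}
\end{equation*}
is a genuine product of dilated simplices — no piecewise-linear hyperplanes appear here, which is precisely why the integral is polynomial in $L^+$ rather than merely piecewise polynomial.

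After this factorization, the integrand $\prod_j (L_j^-(m))^{\alpha_j^-}$ expands, as a polynomial in the $m_e$, into a sum of monomials $\prod_e m_e^{\beta_e}$, each of which integrates against the product of simplex measures to give $\prod_i c_i \cdot (L_i^+)^{|E_i| - 1 + \sum_{e\in E_i}\beta_e}$ for explicit rational constants $c_i$. Multiplying by $\prod_i L_i^+$ gives the exponent $|E_i| + \sum_{e\in E_i} \beta_e$ at each $i$, so summing over $\Ro$ and the monomial expansion yields a polynomial in $L^+$. The total degree equals
\begin{equation*}
\underbrace{n^+}_{\prod L_i^+} + \underbrace{|X_1\Ro| - n^+}_{\dim \prod \Delta_{E_i}} + \underbrace{d(\alphab)}_{\text{degrees }\beta_e} = |X_1\Ro| + d(\alphab),
\end{equation*}
and a short count for four-valent ribbon graphs of type $(g,n^+,n^-)$ gives $|X_1\Ro| = 2(2g-2+n^++n^-) = 2d_{g,n^+,n^-}$, matching the claimed homogeneity. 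Symmetry in $L^+$ is automatic: the sum over $\Ro$ is invariant under relabeling of the positive boundary components, since relabeling induces a bijection on isomorphism classes of ribbon graphs with labeled boundaries.

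The main technical obstacle is the Fubini/change-of-variables step, which requires identifying the pushforward of the edge-length Lebesgue measure on $\Met(\Ro)$ under $L^-$ with the scaled simplex measure $d\sigma_{n^-}^{|L^+|}$ appearing in the statement; this uses precisely the compatibility of $d\mu_\Mo$, $d\sigma_\Mo$ and $d\mu_\Mo(L)$ recalled in the theorem above, together with the biparticity decomposition of the edge set. Everything else reduces to bookkeeping of degrees.
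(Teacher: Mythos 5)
Your proof is correct and follows essentially the same route as the paper's: decompose $V_{g,n^+,n^-}$ cell-by-cell over four-valent directed ribbon graphs, use biparticity to identify the fiber over a fixed $L^+$ with a product of dilated simplices indexed by the positive boundaries, expand $\prod_j(L_j^-(m))^{\alpha_j^-}$ into monomials in the edge lengths, and integrate each monomial by the standard simplex formula, with the degree count $|X_1\Ro|+d(\alphab)=2d_{g,n^+,n^-}+d(\alphab)$ and symmetry inherited from that of $V_{g,n^+,n^-}$. The only cosmetic difference is that the paper packages the lattice-point counts $\overline{P}_{\Ro}(\alphab^+|\alphab^-)$ into an explicit closed formula (its Lemma), whereas you leave the coefficients as unspecified rational constants, which suffices for the statement.
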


The proof of Proposition \ref{prop_transfert_lemma} is a consequence of Lemma \ref{lem_transfert_lemma} given below. Let $\Ro$ be a directed ribbon graph, we set:
\begin{equation}
\label{formula_PRo}
\overline{P}_{\Ro}(\alphab^+|\alphab^-)=\#\left\{x\in \N^{X_1R}~|~ L_i^\pm(x)=\alphab^\pm_i,~~~\forall i\in \llbracket 1,n^\pm\rrbracket \right\}.
\end{equation}
\begin{lem}
\label{lem_transfert_lemma}
Let $\Ro$ be a directed ribbon graph and $\alphab^-\in \N^{n^-}$ be a multi-index. We have the relation:
\begin{equation}
\label{formula_integral_VRo}
\prod_i L_i^+\int_{L^-\in |L^+|\cdot\Delta_{n^+}}V_{\Ro}(L^+|L^-)\prod_i \frac{(L_i^-)^{\alpha_i^-}}{\alpha_i^-!}d\sigma_{n^-}^{|L^+|}= \sum_{\alphab^+} \overline{P}_{\Ro}(\alphab^+|\alphab^-) \prod_i\frac{(L_i^{+})^{\alpha_i^++\alpha_i^+(\Ro)}}{(\alpha_i^++\alpha_i^+(\Ro)-1)!}.
\end{equation}
\end{lem}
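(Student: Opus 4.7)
The plan is to recognize the left-hand side as a polynomial integral over the polytope $\Met(\Ro,L^+)$ of metrics on $\Ro$ with prescribed positive boundary lengths, factorize this polytope into a product of standard simplices using bipartiteness, and finish by Dirichlet's integral formula.

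First, by the defining property of $V_\Ro(L^+|L^-)$ as the density of the pushforward of the Lebesgue measure on $\Met(\Ro,L^+)$ under the negative boundary length map, a Fubini-type rewriting gives
\begin{equation*}
\int_{L^-\in |L^+|\cdot\Delta_{n^-}} V_\Ro(L^+|L^-)\,F(L^-)\,d\sigma_{n^-}^{|L^+|} = \int_{m\in\Met(\Ro,L^+)} F(L^-(m))\,dm
\end{equation*}
for any polynomial $F$. Applied with $F(L^-)=\prod_j (L_j^-)^{\alpha_j^-}/\alpha_j^-!$, the problem reduces to computing a polynomial integral on $\Met(\Ro,L^+)$.

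Next, the bipartite structure of $\Ro$ means that each edge $e$ lies on exactly one positive boundary $\beta_{[e]^+}^+$ and exactly one negative boundary $\beta_{[e]^-}^-$. Consequently, the $n^+$ constraints defining $\Met(\Ro,L^+)$ decouple across the positive boundaries, producing
\begin{equation*}
\Met(\Ro,L^+)=\prod_{i=1}^{n^+} L_i^+\cdot\Delta_{\alpha_i^+(\Ro)},
\end{equation*}
with $\alpha_i^+(\Ro)=\#\beta_i^+$. Applying the multinomial theorem to each $L_j^-(m)=\sum_{e\in\beta_j^-} m_e$ then rewrites the integrand as
\begin{equation*}
\prod_j \frac{(L_j^-(m))^{\alpha_j^-}}{\alpha_j^-!}=\sum_{k\in\N^{X_1R},\, L^-(k)=\alphab^-}\prod_e \frac{m_e^{k_e}}{k_e!},
\end{equation*}
again using that each edge lies in a unique negative face.

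Swapping sum and integral and exploiting the product structure, the integral factorizes over $i$ into monomial integrals over simplices, each computed by Dirichlet's formula as
\begin{equation*}
\int_{L_i^+\cdot\Delta_{\alpha_i^+(\Ro)}}\prod_{e\in\beta_i^+}\frac{m_e^{k_e}}{k_e!}\,d\sigma_{\alpha_i^+(\Ro)}^{L_i^+}=\frac{(L_i^+)^{L_i^+(k)+\alpha_i^+(\Ro)-1}}{(L_i^+(k)+\alpha_i^+(\Ro)-1)!}.
\end{equation*}
Regrouping the remaining sum over $k$ by the value $\alphab^+=L^+(k)$ identifies the cardinality $\overline{P}_\Ro(\alphab^+|\alphab^-)$ from (\ref{formula_PRo}), and multiplication by $\prod_i L_i^+$ produces the stated identity. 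The main obstacle I expect is the first step: reconciling the normalizations of $V_\Ro$, the simplex measure $d\sigma_{n^-}^{|L^+|}$, and the lattice of integer points implicit in $\overline{P}_\Ro$ so that the Fubini identity holds without any hidden prefactor. Once this alignment is in place, the bipartite factorization together with the monomial expansion make the identity completely explicit, with no further cancellation required.
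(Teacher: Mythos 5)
Your proposal is correct and follows essentially the same route as the paper: rewrite the left-hand side as a monomial integral over $\Met_+(\Ro,L^+)$, use the partition of edges by their positive boundary to factor this polytope into a product of dilated simplices (checking that the lattice normalizations match), expand via the multinomial theorem, integrate each monomial by the Dirichlet formula, and regroup the lattice points by $\alphab^+=L^+(k)$ to recover $\overline{P}_{\Ro}(\alphab^+|\alphab^-)$. The normalization issue you flag is exactly the point the paper addresses by noting that the product decomposition preserves the lattice of integer points, so the product of the normalized simplex measures is the correct conditional measure.
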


\begin{proof}
The space $\Met(\Ro,L^+,L^-)$ is defined by the relations $L^\pm_i(x)=L^\pm_i$ for all $i\in \llfloor 1 , n^{\pm}\rrfloor$. Let $\Met_+(\Ro,L^+)$ be the space defined by $L^+_i(x)=L^+_i$ for all $i\in \llbracket 1 , n^{+}\rrbracket$. The Lebesgue measure $d\mu_{\Ro,+}(L^+)$ on $\Met_+(\Ro,L^+)$ satisfies:
\begin{equation*}
\int_{x\in \Met_+(\Ro,L^+)}\prod_i(L^-_i(x))^{\alpha^-_i}d\mu_{\Ro,+}(L^+)=\int_{L^-\in |L^+|\cdot \Delta_{n^-}}V_{\Ro}(L^+|L^-)\prod_i (L^-_i)^{\alpha^-_i}d\sigma_{n^-}^{|L^+|}~~~~~\forall L^+\in~\Rp^{n^+}.
\end{equation*}
Each edge $e$ belongs to a unique positive boundary component, and this forms a partition of the set of edges. Then, we need to compute the integral of $\prod_i L_i^-(x)^{\alpha_i^+}$ over the set
\begin{equation*}
\left\{x~|~L_i^+(x)=L_i^+~~\forall i \in \llfloor 1 , n^{+}\rrfloor\right\}=\prod_i\left\{(x_e)_{e,[e]^+=i}\in \R_+^{\alpha_i^+(R)}~|~\sum_{e,[e]^+=i} x_e=L_i^+\right\}.
\end{equation*}
This set is a product of simplicies; the decomposition preserves the lattice of integer points, and then the measure on the RHS is the product of the Lebesgue measures on each factor. Moreover, by the Newton binomial formula, we have
\begin{equation*}
\prod_i L_i^{-}(x)^{\alpha_j^-}=\prod_i \alpha_i^-! \sum_{m\in \N^{X_1R}, L_i^-(m)=\alpha_i^-} \prod_e \frac{x_e^{m_e}}{m_e!},
\end{equation*}
when we integrate over the product of simplicies, it leads to
\begin{equation*}
\int_{\{|L_i^+(x)=L_i^+\}} \prod_e \frac{x_e^{m_e}}{m_e!} dx=\prod_i \int_{(x_e)_{e,[e]^+=i},\sum_{e,[e]^+=i} x_e=L_i^+} \prod_{e,[e]^+=i}\frac{x_e^{m_e}}{m_e!}d\sigma_i=\prod_i \frac{(L_i^+)^{\alpha_i^++\alpha_i^+(\Ro)-1}}{(\alpha_i^++\alpha_i^+(\Ro)-1)!},
\end{equation*}
because of the formula
\begin{equation*}
\int_{(x_e)_{e,[e]^+=i},\sum_{e,[e]^+=i} x_e=L_i^+} \prod_{e,[e]^+=i}\frac{x_e^{m_e}}{m_e!}d\sigma_i= \frac{(L_i^+)^{\sum_e (m_e+1)-1}}{(\sum_e (m_e+1)-1)!}.
\end{equation*}
Then, we finaly obtain: 
\begin{equation*}
\int_{L^-\in |L^+|\cdot \Delta_{n^-}}V_{\Ro}(L^+|L^-)\prod_i(L^-_i)^{\alpha^-_i}d\sigma_{n^-}^{|L^+|}=\sum_{\alphab^+} \overline{P}_{\Ro}(\alphab^+|\alphab^-)\frac{\alphab^-!\prod_i (L^+_i)^{\alpha^+_i+\alpha^+_i(\Ro)-1}}{(\alphab^+ +\alphab^+(\Ro)-1)!}.
\end{equation*}
\end{proof}

\begin{proof} 
To prove Proposition \ref{prop_transfert_lemma}, we use the fact that $V_{g,n^+,n^-}$ is given by
\begin{equation*}
    V_{g,n^+,n^-}=\sum_{\Ro \in \rib_{g,n^+,n^-}^*} \frac{V_{\Ro}}{\#\Aut(\Ro)},
\end{equation*}
where the sum run over the set $\rib_{g,n^+,n^-}^*$ of isomorphism classes of quadrivalent ribbon graphs of type $(g,n^+,n^-)$. Then with Lemma \ref{lem_transfert_lemma} we obtain:
\begin{eqnarray*}
\prod_i L_i^+~ \underset{L^-\in |L^+|\cdot \Delta_{n^-}}{\int}V_{g,n^+,n^-}(L^+|L^-)\prod_i \frac{(L_i^-)^{\alpha_i^-}}{\alpha_i^-!}d\sigma_{n^-}^{|L^+|}&=&\sum_{\Ro\in \rib_{g,n^+,n^-}^*} \frac{1}{\#\Aut(\Ro)}\underset{L^-\in |L^+|\cdot \Delta_{n^-}}{\int}V_{\Ro}(L^+|L^-)\prod_i \frac{(L_i^-)^{\alpha_i^-}}{\alpha_i^-!}d\sigma_{n^-}^{|L^+|}\\
&=&\underset{\Ro\in \rib_{g,n^+,n^-}^*}{\sum}\sum_{\alphab^+}\frac{ \overline{P}_{\Ro}(\alphab^+|\alphab^-)}{\#\Aut(\Ro)}\prod_i\frac{(L_i^{+})^{\alpha_i^++\alpha_i^+(\Ro)}}{(\alpha_i^++\alpha_i^+(\Ro)-1)!}.
\end{eqnarray*}
We can see that if $ \overline{P}_{\Ro}(\alphab^+|\alphab^-)$ is non-zero, we must have
\begin{equation*}
d(\alphab^+)=d(\alphab^-).
\end{equation*}
And then, the degree of the monomials in the RHS is 
\begin{equation*}
d(\alphab^+(\Ro))+d(\alphab^-)=\# X_1\Ro+d(\alphab^-)=2d_{g,n^+,n^-}+d(\alphab^-).
\end{equation*}
Then, the RHS is an homogeneous polynomial of degree $d(\alphab^-)+ 2d_{g,n^+,n^-}$. As the function $V_{g,n^+,n^-}$ is symmetric, we conclude that the polynomial is symmetric.
\end{proof}

\subsection{Operators associated to directed surfaces}
\paragraph{Definition of the operators:}
Let $(g,n^+,n^-)$ be such that $2g-2+n^++n^->0$, and: 
\begin{equation*}
K_{g,n^+,n^-}(L^+|L^-)=\prod_i L_i^+ ~~ V_{g,n^+,n^-}(L^+|L^-).
\end{equation*}
For each $P$ homogeneous polynomial, we can consider the integral
\begin{equation}
 K_{g,n^+,n^-}\cdot P~(L^+)=\frac{1}{n^-!}\int_{L^-\in |L^+|\cdot\Delta_{n^-}} K_{g,n^+,n^-}(L^+|L^-)P(L^-)d\sigma_{n^-}^{|L^+|}.
\end{equation}
The factor $\frac{1}{n^-!}$ is to avoid over-counting when composing the operators.
According to Proposition \ref{prop_transfert_lemma}, the integral is well defined, and $ K_{g,n^+,n^-}\cdot P$ is an homogeneous symmetric polynomial of degree $2d_{g,n^+,n^-}+d(P)$.\\

Using the formalism of Section \ref{subsection_boson} with $V=\Q[L]$, the tensor power $T_n(V)=\Q[L_1,...,L_n]$ is the space of polynomials in $n$ variables. For all multi-index $\alphab$ we denote 
\begin{equation*}
    e_{\alphab}(L)=\prod_i \frac{L_i^{\alpha_i}}{\alpha_i!},
\end{equation*}
which form a basis of $T(V)$. In this space, the natural grading $d$ is given by the degree, with $d(e_{\alphab})=d(\alphab)$. The completion $\hat{T}_n(V)=\Q[[L_1,...,L_n]]$ is the space of formal series in $n$ variables, $S(V)$ is the space of symmetric polynomials and $\hat{S}(V)$ is its completion. Then, using Proposition \ref{prop_transfert_lemma}, we obtain the following corollary:
\begin{cor}
\label{cor_operator_gn+n-}
The $K_{g,n^+,n^-}$ define linear operators 
\begin{equation*}
K_{g,n^+,n^-}: S_{n^-}(V)\longrightarrow S_{n^+}(V),
\end{equation*}
which are homogeneous of degree $2d_{g,n^+,n^-}$. 
\end{cor}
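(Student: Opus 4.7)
The plan is to derive Corollary \ref{cor_operator_gn+n-} as a direct consequence of Proposition \ref{prop_transfert_lemma}, essentially by linearity after reducing symmetric inputs to monomial inputs.

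First I would set up the reduction. Any $P \in S_{n^-}(V)$ is a finite linear combination of the basis elements $e_\mu$ with $\mu \in \mathcal{P}_{n^-}(\N)$, and by construction $e_\mu = \sum_{\alphab \in C(\mu)} e_{\alphab}$, where $e_{\alphab}(L^-) = \prod_i (L_i^-)^{\alpha_i}/\alpha_i!$ runs over the orbit of $\alphab$ under $\Sf_{n^-}$. Since $K_{g,n^+,n^-} \cdot (\cdot)$ is defined as an integral transform, it is automatically linear in $P$, so it suffices to understand its action on each monomial $e_{\alphab}$.

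Next, I would invoke Proposition \ref{prop_transfert_lemma} with the choice $\alphab^- = \alphab$. Up to the combinatorial factor $\frac{1}{n^-!} \prod_i \frac{1}{\alpha_i!}$, this asserts precisely that $K_{g,n^+,n^-}\cdot e_{\alphab}$ is a homogeneous \emph{symmetric} polynomial in $L^+$ of degree $2d_{g,n^+,n^-}+d(\alphab)$. Summing over $\alphab \in C(\mu)$ (which is finite and all of whose elements have the same degree $d(\mu)$) produces a homogeneous symmetric polynomial in $L^+$ of degree $2d_{g,n^+,n^-}+d(\mu)$, i.e.\ an element of $S_{n^+}^{2d_{g,n^+,n^-}+d(\mu)}(V)$. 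Linearity then extends this to all of $S_{n^-}(V)$ and shows that $K_{g,n^+,n^-}$ raises the degree by exactly $2d_{g,n^+,n^-}$, proving homogeneity.

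There is no real obstacle here: the nontrivial content (finiteness, polynomiality, symmetry in $L^+$, and the precise degree) has already been packaged in Proposition \ref{prop_transfert_lemma}. The only thing to take some care with is to check that the symmetry of the output in $L^+$ is uniform in $\mu$, which is automatic from the proposition since symmetry in $L^+$ is preserved under linear combinations, and that the degrees inside each orbit $C(\mu)$ match so that the sum is homogeneous. Finally, one can remark that the operator is in fact defined on all of $T_{n^-}(V)$ by the same integral formula, and its restriction to the symmetric subspace $S_{n^-}(V)$ is the statement of the corollary.
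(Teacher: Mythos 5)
Your proposal is correct and follows the paper's own route: the corollary is deduced directly from Proposition \ref{prop_transfert_lemma}, which already packages the well-definedness, polynomiality, symmetry in $L^+$, and the degree count for monomial inputs, and your reduction by linearity over the orbits $C(\mu)$ is exactly the (implicit) argument the paper relies on.
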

The operators $K_{g,n^+,n^-}$ naturally extend to $S(V)$ (by assuming that they act by zeros on $S_k(V)$ with $k\neq n^{-}$). Moreover, as we see in Section \ref{subsection_operator_boson}, an homogeneous operator always extends to the formal Fock space, then the $K_{g,n^+,n^-}$ induce endomorphisms
\begin{equation*}
K_{g,n^+,n^-}: \hat{S}(V)\longrightarrow \hat{S}(V).
\end{equation*}

\begin{rem}[Twists]
\label{remark_composition_1}
We change $V_{g,n^+,n^-}$ to $K_{g,n^+,n^-}$ for two reasons: first, the degree is twice the Euler characteristic in the second case, which is more natural and additive under composition. Moreover, the composition of two operators is given by the formula
\begin{equation}
K_{g_1,n^+,k}\circ K_{g_2,k,n^-}
=\prod_i L_i^+\int_{x\in |L^+|\cdot \Delta_{k}}\frac{ V_{g_1,n^+,k}(L^+|x) V_{g_2,k,n^-}(x|L^-)}{k!} \prod_j x_j d\sigma_{k}^{|L^+|}.
\end{equation}
Then we see that the \textit{twists} naturally appear in this formula; they correspond to the factor $\prod_j x_j$ in the integral \cite{mirzakhan2008}.
\end{rem}

\paragraph{Disconnected surfaces and disjoint union:}
It is natural to consider the volumes associated to disconnected surfaces. For $\Mol$ an oriented surface of type $(g,n^+,n^-)$ with labeled boundaries, we can use the notation $K_{\Mol}$ instead of $K_{g,n^+,n^-}$. If $\Mo=\sqcup_{c\in \pi_0(M)} \Mo(c)$ is a disconnected surface, the volume $K_{\Mol}$ satisfies the formula
\begin{equation*}
\label{formula_disconected_Kernel}
K_{\Mol}(L^+|L^-) = \prod_c K_{\Mo(c)}(L_{I^+(c)}^+|L_{I^-(c)}^-).
\end{equation*}
Where $I^\pm(c)$ are the two partitions of the positive and negative boundaries (see Remark \ref{remark_label_1}). If $\#\pi_0(\Mol)=k$, the function $K_{\Mol}$ is defined on the codimension $k$ affine subspace $\Lambda_{\Mol}=\prod \Lambda_{\Mo(c)}$ of $\R^{n^++n^-}$. To define the operator $K_{\Mol}$, we consider the set
\begin{equation*}
\Delta_{\Mol}(L^+)=\prod_c ~|L^+(c)|\cdot \Delta_{I^-(c)}=\{L^-\in \Rp^{n^-}~|~|L^+_{I^+(c)}|=|L^-_{I^-(c)}|,~~\forall~c~\in \pi_0(\Mo)\}
\end{equation*}
with the measure $d\sigma_{\Mol}^{L^+}=\prod_c d\sigma_{I^-(c)}^{|L^+_{I^+(c)}|}.$ And we define the operator $K_{\Mol}$ by
\begin{equation*}
K_{\Mol}\cdot P= \frac{1}{n^-!}\int_{L^-\in\Delta_{\Mol}(L^+)}K_{\Mol}(L^+|L^-) P(L^-)d\sigma_{\Mol}^{L^+}.
\end{equation*}
Using Fubini's Theorem, if we can factorize $P(L^-)=\prod_c P_c(L_{I^-(c)}^-)$, we can write
\begin{eqnarray*}
\int_{\Delta_{\Mol}(L^+)}K_{\Mol}(L^+|L^-)Pd\sigma_{\Mol}^{L^+}&=&\prod_c \int_{L^-\in |L^+(c)| \cdot \Delta_{n^-(c)}} K_{\Mol(c)}(L_{I^+(c)}^+|L^-)P_c(L^-)d\sigma_{n^-(c)}^{|L^-(c)|}\\
&=&\prod_c (K_{\Mol(c)}\cdot P_c)~(L_{I^+(c)}^+).
\end{eqnarray*}
By these formulas, $K_{\Mol}$ defines a linear operator
\begin{equation*}
K_{\Mol}~:~T(V)\longrightarrow T(V).
\end{equation*}
It is homogeneous of degree $2d(\Mo)$ and then extends to the completion. However, when $\Mol$ is not connected, $K_{\Mol}$ is not symmetric; we only have $K_{\Mol} \in \End(\hat{T}(V))$. By using linearity, we define an operator,
\begin{equation*}
\Kb : \M^{\s}\longrightarrow \End(\hat{T}(V)),
\end{equation*}
by $\Kb(e_{\Mol})=K_{\Mol}$. In Section \ref{subsection_alg_dir_surf}, we consider the action of the symmetric group by permuting the label and define the set of symmetric elements $S(\M^{\s})$. It admits a basis indexed by vectors $e_{\Mo}$, for $\Mo\in \bordos$ the set of unlabeled directed stable surfaces we set $K_{\Mo}=\Kb(e_{\Mo})$. It is, in some sense, the symmetrization of the operator $K_{\Mol}$ and corresponds to the sum of all the possible ways to label the boundaries of $\Mo$ up to homeomorphisms. This time $K_{\Mo}$ is a symmetric operator and extends to a linear operator on the formal Fock space,
\begin{equation*}
K_{\Mo}~:~\hat{S}(V)\longrightarrow \hat{S}(V).
\end{equation*}
\begin{rem}[Connected surfaces]
If $\Mo$ is connected of type $(g,n^+,n^-)$, there is only one possible way of labeling the boundaries, and then $K_{g,n^+,n^-}$ and $K_{\Mo}$ coincide in this case.
\end{rem}

In Section \ref{subsection_alg_dir_surf}, we define the disjoint union $\sqcup$ on $S(\M^{\s})$ (and $\hat{S}(\M^{\s})$), and we have the following natural proposition:
\begin{prop}
\label{prop_union_operator}
The operators satisfy the following rule:
\begin{equation*}
K_{\Mo_1\sqcup \Mo_2} = K_{\Mo_1}\sqcup K_{\Mo_2}
\end{equation*}
for each $\Mo_1,\Mo_2$. And then:
\begin{equation*}
\Kb~:~\hat{S}(\M^{\s})\longrightarrow \End(\hat{S}(V)),
\end{equation*}
defines a morphism of commutative algebras for the disjoint union.
\end{prop}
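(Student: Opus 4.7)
The plan is to reduce the identity to the labeled level, where it becomes a direct consequence of Fubini's theorem applied to the factorization $\Delta_{\Mol_1\otimes \Mol_2}(L^+)=\Delta_{\Mol_1}(L^+_{I_1^+})\times \Delta_{\Mol_2}(L^+_{I_2^+})$, and then to carry out the symmetrization that turns labeled surfaces into unlabeled ones. The factorization of the kernel (Formula \ref{formula_disconected_Kernel}) together with the product structure of the measure $d\sigma_{\Mol}^{L^+}$ on a disconnected surface is the geometric content; the rest is bookkeeping.

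First, I would fix labeled representatives $\Mol_1,\Mol_2$ and consider the concatenation $\Mol_1\otimes \Mol_2$. Testing against a factorized polynomial $P(L^-)=P_1(L^-_{I_1^-})P_2(L^-_{I_2^-})$ and using Fubini on the product $\Delta_{\Mol_1\otimes \Mol_2}(L^+)=\Delta_{\Mol_1}(L^+_{I_1^+})\times \Delta_{\Mol_2}(L^+_{I_2^+})$ yields
\begin{equation*}
K_{\Mol_1\otimes \Mol_2}\cdot (P_1\otimes P_2)=(K_{\Mol_1}\cdot P_1)\otimes (K_{\Mol_2}\cdot P_2),
\end{equation*}
after identifying $\hat T_{n_1^+ +n_2^+}(V)$ with $\hat T_{n_1^+}(V)\otimes \hat T_{n_2^+}(V)$ by concatenation of multi-indices. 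In other words, at the labeled level $K_{\Mol_1\otimes \Mol_2}=K_{\Mol_1}\otimes K_{\Mol_2}$ in $\End(\hat T(V))$. Here one has to be careful that the normalizing prefactor $\frac{1}{(n_1^-+n_2^-)!}$ in the definition of $K_{\Mol_1\otimes \Mol_2}$ agrees with $\frac{1}{n_1^-!n_2^-!}$ only after the later symmetrization step; for the moment I would just keep the factorials explicit.

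Second, I would symmetrize. By definition $e_{\Mo_1\sqcup \Mo_2}$ is obtained from $e_{\Mol_1\otimes \Mol_2}$ by averaging over $\mathfrak S_{n_1^++n_2^+,n_1^-+n_2^-}$ and dividing by $n_1^+!n_1^-!n_2^+!n_2^-!$, and this is precisely the definition used to write Formula \ref{formula_disjoint_union_vector}. On the operator side, $K_{\Mo_1}\sqcup K_{\Mo_2}=\theta\circ (K_{\Mo_1}\otimes K_{\Mo_2})\circ \theta^*$ splits the input partition, applies the two operators in parallel, and remultiplies. Expressing both sides in the basis $(e_\mu)$ and comparing with Formula \ref{formule_union_op_coeff} reduces the identity to the combinatorial fact that the multinomial weights $\binom{\mu^+}{\mu_1^+\ \mu_2^+}\binom{\mu^-}{\mu_1^-\ \mu_2^-}$ count exactly the ways in which the symmetrization over labels of the concatenated surface distributes the $\mu_i^\pm$ between the two components. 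Once this matches, multiplicativity of $\Kb$ on generators follows and extends to all of $\hat S(\M^\s)$ by linearity and continuity for the $(d,n^+,n^-)$-topology.

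The main obstacle is precisely this combinatorial reconciliation of three normalizations appearing simultaneously: the $\frac{1}{n^-!}$ in the definition of $K_{\Mol}\cdot P$, the $\frac{1}{n_1^+!n_1^-!n_2^+!n_2^-!}$ in the definition of $\sqcup$ on $\M$, and the multinomial $\binom{\mu^+}{\mu_1^+\ \mu_2^+}$ from Formula \ref{formule_union_op_coeff}. No analytic difficulty arises, since all integrands are continuous piecewise polynomials on bounded polytopes and the factorization of the integration domain is exact. Commutativity on $\hat S(\M^\s)$ was already established in Subsection \ref{paragraph_union_surface} and commutativity of $\sqcup$ on $\End(\hat S(V))$ was stated at the end of Subsection \ref{paragraph_union_operator}, so the equality $K_{\Mo_1\sqcup\Mo_2}=K_{\Mo_1}\sqcup K_{\Mo_2}$ together with $\Kb(e_\emptyset)=\pr_0=\id_{\sqcup}$ immediately upgrades $\Kb$ to a morphism of commutative algebras.
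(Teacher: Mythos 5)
Your proposal is correct and follows essentially the same route as the paper: the paper starts directly from the symmetrized kernel identity $K_{\Mo_1\sqcup \Mo_2}(L^+|L^-)=\sum_{I_i^\pm}K_{\Mo_1}(L^+_{I_1^+}|L^-_{I_1^-})K_{\Mo_2}(L^+_{I_2^+}|L^-_{I_2^-})$ and concludes by integrating against $e_\mu$ and invoking Formula \ref{formule_union_op_1}, which is the same Fubini-plus-symmetrization computation you organize in two steps (labeled tensor factorization, then averaging over labels). Your explicit tracking of the three normalizations is a welcome elaboration of a point the paper leaves implicit, but it is not a different argument.
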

\begin{proof}
To prove Proposition \ref{prop_union_operator}, we can start with the following equality that follows from the definition:
\begin{equation*}
K_{\Mo_1\sqcup \Mo_2}(L^+|L^-) =\sum_{I^{\pm}_i} K_{\Mo_1}(L_{I_1^+}^+|L_{I_1^-}^-)K_{\Mo_2}(L_{I_2^+}^+|L_{I_2^-}^-).
\end{equation*}
where the sum is over all pairs of partitions $I_1^\pm\sqcup I_2^\pm=\{1,...,n^\pm_1+n^\pm_2\}$ in two sets of respective cardinals $n_1^\pm,n_2^{\pm}$. To conclude, we can integrate over a vector $e_\mu\in S(V)$ and use the Formula \ref{formule_union_op_1} for the union of two operators. As $K_{\Mo}$ is of degree $2d(\Mo)$ and  $S^d(\M^{\s})$ is finite dimensional; there are only a finite number of stable surfaces with a fixed Euler characteristic. Then, from the results of Section \ref{subsection_operator_boson} for $x=(x_d)\in \hat{S}(\M^{\s})$, the series $\sum_d \Kb(x_d)$ converges to an operator in $\End(\hat{S}(V))$.
\end{proof}

\paragraph{Unstable surfaces:}
The case of the cylinder (surface of type $(0,1,1)$) is quite fundamental. However, the volume $V_{0,1,1}$ of "quadrivalent" ribbon graphs on the cylinder is not defined, there is no ribbon graph. But we adopt the following definition:
\begin{equation*}
K_{0,1,1}(L^+|L^-)=\delta(L^+-L^-).
\end{equation*}
 The operator $K_{0,1,1}$ corresponds to the identity on $S_1(V)=V$ and then, to the projection:
\begin{equation*}
\pr_1~:~\hat{S}(V)\longrightarrow \hat{V}.
\end{equation*}
Using Lemma \ref{lemma_identity_pr}, we obtain:
\begin{lem}
\label{lem_exp_cyl_id}
The operator $\Kb$ is well defined on $\hat{S}(\M)$ and defines a morphism of commutative algebras
\begin{equation*}
\Kb~:~(\hat{S}(\M),\sqcup,+) \longrightarrow (\End(\hat{S}(V)),\sqcup,+).
\end{equation*}
Moreover, we have the following relation in $\End(\hat{S}(V))$
\begin{equation}
\label{formula_exp_K011}
\exp_\sqcup(K_{0,1,1})=\id.
\end{equation}
Finally, if $\Kb_{\Mo}=\Kb(\eb_{\Mo})$, then, it is a formal differential operators $\Kb_{\Mo}\in \Dh(V)$. And we have
\begin{equation*}
    \Kb_{\Mo_1\sqcup \Mo_2}=:\Kb_{\Mo_1} \Kb_{\Mo_2}:.
\end{equation*}
\end{lem}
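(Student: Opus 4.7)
The plan is to reduce everything to the identification $K_{0,1,1} = \pr_1$, which has been built into the definition; once this is in place, all three assertions follow from the formalism developed in Subsection \ref{subsection_boson}.

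For the first assertion I would extend the morphism $\Kb : \hat{S}(\M^{\s}) \to \End(\hat{S}(V))$ constructed in Proposition \ref{prop_union_operator} by declaring $\Kb(e_{0,1,1}) = \pr_1$ and enforcing multiplicativity with respect to $\sqcup$. Since $\hat{S}(\M)$ is obtained from $\hat{S}(\M^{\s})$ by allowing an arbitrary number of cylinder factors, every element can be written as a convergent sum of terms of the form $x \sqcup e_{0,1,1}^{\sqcup k}/k!$ with $x \in \hat{S}(\M^{\s})$, and the corresponding operator expansion is controlled by the three graduations $(d,n^+,n^-)$, which are finite at each level; so the extension is well defined. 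For the exponential formula \ref{formula_exp_K011}, I would just plug $K_{0,1,1} = \pr_1$ into the third bullet of Lemma \ref{lemma_identity_pr}, which already asserts $\id = \exp_\sqcup(\pr_1)$.

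For the third assertion, I would unfold the definition $\eb_{\Mo} = e_{\Mo} \sqcup \exp_\sqcup(e_{0,1,1})$ and use the morphism property together with \ref{formula_exp_K011} to compute
\begin{equation*}
\Kb_{\Mo} = \Kb(\eb_{\Mo}) = K_{\Mo} \sqcup \exp_\sqcup(K_{0,1,1}) = K_{\Mo} \sqcup \id.
\end{equation*}
Proposition \ref{prop_unstable_diff_operators} then identifies $K_{\Mo} \sqcup \id$ as an element of $\Dh(V)$, giving its explicit differential-operator form in the creation/annihilation basis. For the factorization $\Kb_{\Mo_1 \sqcup \Mo_2} = :\Kb_{\Mo_1}\Kb_{\Mo_2}:$, I would combine Proposition \ref{prop_union_operator}, which yields $K_{\Mo_1 \sqcup \Mo_2} = K_{\Mo_1} \sqcup K_{\Mo_2}$, with the identity $(A_1 \sqcup A_2) \sqcup \id = {:(A_1 \sqcup \id)(A_2 \sqcup \id):}$ stated in the remark following Proposition \ref{prop_unstable_diff_operators}.

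The only step that seems to require genuine care rather than formal manipulation is the convergence argument in the first paragraph: one must check that summing over the number of cylinders $k$ produces a well-defined operator in $\End(\hat{S}(V))$ even though $\pr_1 = K_{0,1,1}$ is not homogeneous in $n$. This is handled by evaluating on a fixed $e_\mu$ and observing that, by the same mechanism that makes $\exp_\sqcup(\pr_1) = \id$ a finite computation on each $e_\mu$, only finitely many cylinder insertions contribute to each matrix coefficient of $\Kb(x)$. Everything else is bookkeeping with results already in place.
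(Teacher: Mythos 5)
Your proposal is correct and follows essentially the same route as the paper: extend $\Kb$ multiplicatively with $\Kb(e_{0,1,1})=\pr_1$, handle convergence by noting that only finitely many cylinder insertions contribute to each matrix coefficient (the paper phrases this as convergence in the adic topology because cylinders raise $n^\pm$ even though they have degree $0$ in $d$), deduce Formula \ref{formula_exp_K011} from Lemma \ref{lemma_identity_pr}, and obtain $\Kb_{\Mo}=K_{\Mo}\sqcup\id\in\Dh(V)$ via Proposition \ref{prop_unstable_diff_operators}. Your treatment of the normal-ordering identity via Proposition \ref{prop_union_operator} and the identity $(A_1\sqcup A_2)\sqcup\id=\,:(A_1\sqcup\id)(A_2\sqcup\id):$ is the intended one; the paper leaves that step implicit.
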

\begin{proof}
The operator $K_{0,1,1}$ is of degree $0$ and then the spaces $S^d(\M)$ are infinite-dimensional; for $x\in \hat{S}(\M)$, the expression $\Kb(x)$ can contains an infinite number of terms of degree zeros. However, adding cylinders increases the number of boundary components, and this ensures convergence for the adic topology. Then the linear map $ \Kb$ is well defined on the completion $\hat{S}(\M)$. The second point is straightforward using the results of Lemma \ref{lemma_identity_pr}. As we prove that
\begin{equation*}
\exp_{\sqcup}(\pr_1)=\id.
\end{equation*}
We can write
\begin{equation*}
    \Kb_{\Mo}=\Kb(e_{\Mo}\sqcup \exp(e_{0,1,1}))=K_{\Mo}\sqcup \exp_{\sqcup}(K_{0,1,1})=K_{\Mo}\sqcup \id.
\end{equation*}
 Using Proposition \ref{prop_unstable_diff_operators}, we can conclude that $\Kb_{\Mo}$ is a formal differential operator.
\end{proof}

\paragraph{Ancestor potential:}
We can consider several operators; the first one, $K^{\s, \cf}$, is associated with connected and stable surfaces:
\begin{equation*}
K^{\s,\cf}={\sum}^{'}_{g,n^+,n^-}q^{2g-2+n^+ + n^-}K_{g,n^+,n^-}=\sum_{\Mo\in \bordosc} q^{d(\Mo)} K_{\Mo}.
\end{equation*}
The upper-script $'$ means that we impose the condition $2g-2+n^++n^->0$, $K^{\s, \cf}$ is well defined and is in $\End(\hat{S}(V))$. Similarly, we can also defines $K^{\cf},K^{\s}$ and $K$, where we sum respectively over the connected surfaces, the stable surfaces and all possible surfaces .

\begin{lem}
\label{lem_exp(Kc)=K}
The operators are related by the formula
\begin{equation*}
K^{\s}=\exp_{\sqcup}(K^{\s,\cf}),~~~\text{and}~~~K=\exp_{\sqcup}(K^{\cf}).
\end{equation*}
Moreover, we have 
\begin{equation*}
    K=K^{\s} \sqcup \id,
\end{equation*}
then $K$ is a formal differential operator on $\Q[[\tb]]$
\begin{equation}
K=\sum_{\mu^+,\mu^-} K^{\s}[\mu^+|\mu^-]\frac{t^{\mu^+}}{\mu^+!}\partial_{\mu^-}.
\end{equation}
Where $(K^{\s}[\mu^+|\mu^-])$ are the matrix coefficients of $K^{\s}$. Moreover, if $\Kb^{\cf}=K^{\cf}\sqcup \id$, then we have $K=:\exp(\Kb^{\cf}):$.
\end{lem}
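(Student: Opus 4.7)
The plan is to exploit the fact, established in Proposition \ref{prop_union_operator} and Lemma \ref{lem_exp_cyl_id}, that $\Kb:(\hat{S}(\M),\sqcup,+)\longrightarrow (\End(\hat{S}(V)),\sqcup,+)$ is a continuous morphism of commutative algebras. Once this is in place, the three assertions of the lemma are all images under $\Kb$ (combined with Proposition \ref{prop_unstable_diff_operators}) of formal identities that already hold in $\hat{S}(\M)$.

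First I would prove the two exponential formulas. In the algebra $\hat{S}(\M^{\s})$ formula \ref{formula_exp_surface_sqcup} (weighted by $q^{d(\Mo)}$, which is additive under $\sqcup$ and hence interacts trivially with the exponential) gives
\begin{equation*}
\sum_{\Mo \in \bordos} q^{d(\Mo)} e_{\Mo} = \exp_{\sqcup}\Bigl(\sum_{\Mo \in \bordosc} q^{d(\Mo)} e_{\Mo}\Bigr),
\end{equation*}
and the same statement with $\bordo$ and $\bordoc$ in place of $\bordos$ and $\bordosc$ in $\hat{S}(\M)$. Applying the morphism $\Kb$ to both sides and using $\Kb(e_{\Mo})=K_{\Mo}$ converts these into $K^{\s}=\exp_{\sqcup}(K^{\s,\cf})$ and $K=\exp_{\sqcup}(K^{\cf})$ respectively.

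Next I would split $K^{\cf}=K^{\s,\cf}+K_{0,1,1}$, the unique way of separating out the unstable cylinder. Because $\sqcup$ is commutative, the $\sqcup$-exponential of a sum factors as a $\sqcup$-product of $\sqcup$-exponentials, so
\begin{equation*}
K = \exp_{\sqcup}(K^{\s,\cf}+K_{0,1,1}) = \exp_{\sqcup}(K^{\s,\cf})\sqcup \exp_{\sqcup}(K_{0,1,1}) = K^{\s}\sqcup \id,
\end{equation*}
using the identity $\exp_{\sqcup}(K_{0,1,1})=\id$ (Formula \ref{formula_exp_K011}). Proposition \ref{prop_unstable_diff_operators} applied to the operator $K^{\s}$ then gives directly the expansion of $K$ as the formal differential operator $\sum_{\mu^+,\mu^-}K^{\s}[\mu^+|\mu^-]\tfrac{t^{\mu^+}}{\mu^+!}\partial_{\mu^-}$.

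Finally, for the normal-ordered expression $K=:\exp(\Kb^{\cf}):$, I would iterate the identity $(A_1\sqcup A_2)\sqcup \id = :(A_1\sqcup \id)(A_2\sqcup \id):$ from the end of Subsection \ref{subsection_operator_boson}. By induction on $n$ this yields $(K^{\cf})^{\sqcup n}\sqcup \id = :(\Kb^{\cf})^n:$. Since $\id\sqcup\id=\id$, we have $K\sqcup \id=K$, so
\begin{equation*}
K = \exp_{\sqcup}(K^{\cf})\sqcup \id = \sum_{n\ge 0} \frac{(K^{\cf})^{\sqcup n}\sqcup \id}{n!} = \sum_{n\ge 0} \frac{:(\Kb^{\cf})^n:}{n!} = {:}\exp(\Kb^{\cf}){:}.
\end{equation*}
The only delicate point is convergence: one must check that each of the infinite $\sqcup$-exponentials converges in the appropriate completion, but this was precisely the content of Proposition \ref{prop_union_operator} and Lemma \ref{lem_exp_cyl_id}, where one uses the fact that adding a component either strictly increases the Euler-degree $d$ or (for cylinders) strictly increases the boundary grading $n^{\pm}$. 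Thus there is no real obstacle; the proof is a direct unwinding of the algebraic structures set up in Section \ref{section_background}.
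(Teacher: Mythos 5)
Your treatment of the main claims follows the paper's own route exactly: apply the algebra morphism $\Kb$ to Formula \ref{formula_exp_surface_sqcup} (with the $q^{d(\Mo)}$ weights, which are additive) to get the two exponential identities, split off the cylinder via $K^{\cf}=K^{\s,\cf}+K_{0,1,1}$ and use $\exp_{\sqcup}(K_{0,1,1})=\id$ to get $K=K^{\s}\sqcup\id$, then invoke Proposition \ref{prop_unstable_diff_operators}. That part is correct and is essentially the paper's proof.

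The last step, however, contains a genuine error: the identity $\id\sqcup\id=\id$ is false. Since $\id=\exp_{\sqcup}(\pr_1)=\sum_k\pr_k$ and $\sqcup$ is multiplicative on exponentials, one gets $\id\sqcup\id=\exp_{\sqcup}(2\pr_1)=\sum_k 2^k\pr_k=2^{N}$, where $N$ is the number operator of Remark \ref{remark_grad_operator}; equivalently, $(\id\sqcup\id)e_\mu=\sum_{\mu_1+\mu_2=\mu}e_{\mu_1}\sqcup e_{\mu_2}=2^{n(\mu)}e_\mu$. Consequently $K\sqcup\id=K^{\s}\sqcup 2^{N}\neq K$, and your chain $K=\exp_{\sqcup}(K^{\cf})\sqcup\id=:\exp(\Kb^{\cf}):$ does not go through as written: the normal-ordering identity correctly gives $:\exp(K^{\cf}\sqcup\id):=\exp_{\sqcup}(K^{\cf})\sqcup\id=K\sqcup\id$, which differs from $K$ by an extra factor of $2^{N}$ on the cylinder part. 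The repair is to apply your induction $(A^{\sqcup n})\sqcup\id=:(A\sqcup\id)^n:$ to $A=K^{\s,\cf}$ rather than to $K^{\cf}$: this yields $:\exp(K^{\s,\cf}\sqcup\id):=\exp_{\sqcup}(K^{\s,\cf})\sqcup\id=K^{\s}\sqcup\id=K$, i.e., the normal-ordered exponential formula holds with the \emph{stable} connected operator. (The paper's own proof is silent on this final claim, and as literally stated with $\Kb^{\cf}=K^{\cf}\sqcup\id$ it suffers from the same discrepancy; your derivation makes the needed correction visible.)
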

\begin{proof}
By using Proposition \ref{prop_union_operator}, we have
\begin{equation*}
K_{\Mo_1 \sqcup \Mo_2}=K_{\Mo_1}\sqcup K_{\Mo_2}.
\end{equation*}
Moreover, by Formula \ref{formula_exp_surface_sqcup}, we can write
\begin{equation*}
\sum_{\Mo \in \bordos}e_{\Mo}=\exp_{\sqcup}( \sum_{\Mo \in \bordosc}e_{\Mo}).
\end{equation*}
Then we have
\begin{equation*}
K^{\s}=\Kb \left(\exp_{\sqcup}\left( \sum_{\Mo \in \bordosc}e_{\Mo}\right)\right)=\exp_{\sqcup}\left( \sum_{\Mo \in \bordosc}\Kb(e_{\Mo})\right)=\exp_{\sqcup}(K^{\s,\cf}).
\end{equation*}
As we have $K^{\cf}=K_{0,1,1}+K^{\cf,\s}$, then, by using Lemma \ref{lem_exp_cyl_id} we deduce 
\begin{equation*}
    K=\exp_{\sqcup}(K^{\cf,\s})\sqcup \exp_{\sqcup}(K_{0,1,1})=K^{\s}\sqcup id;
\end{equation*}
 Using Proposition \ref{prop_unstable_diff_operators}, we can conclude that $K$ is a formal differential operator.
\end{proof}

\subsection{Product of operators}
\paragraph{Operators associated with a directed acyclic stable graph:}
The composition of kernels corresponds to gluings of surfaces. But the map
\begin{equation*}
 \Kb: S(\M)\longrightarrow \End(S(V)),
\end{equation*}
is not a morphism for the composition. To obtain such properties, as we see in Paragraph \ref{paragraph_algebra_acyclic}, we need to introduce the operator $K_{\Gol}$ for $\Gol \in \acyclolp$, a labeled acyclic stable graph. To do that, we use the cone $\Lambda_{\Gol}$ of directed cycles in $\Gol$; see Paragraph \ref{paragraph_cone}. For a fixed $L=(L^+,L^-)\in \Lambda_{\partial \Gol}$, we consider the convex set $\Lambda_{\Gol}(L)$, with its "Lebesgue" measure $d\sigma_{\Gol}(L)$. Then, we can define the kernel
\begin{equation*}
 K_{\Gol}(L^+|L^-)=\int_{\Lambda_{\Gol}(L)}\prod_c K_{\Gol(c)}(L^+(c)|L^-(c))d\sigma_{\Gol}(L).
\end{equation*}
Similarly to the case of $K_{\Mol}$, we define the integral operator $K_{\Gol}$ with kernel given by $K_{\Go}(L^+|L^-)$. We also denote $K_{\Go}$ the symmetrization, for $\Go\in \acyclo$. As in the precedent case we have the following:
\begin{prop}
\label{prop_compo_Kb}
The operator $K_{\Go}$ preserves the space of symmetric polynomials and then induces a homogeneous linear operator of degree $2d(\Go)$. We can extend $\Kb$ to the free algebra $\hat{S}(\A)$ and define a linear map:
\begin{equation*}
\Kb~:~\hat{S}(\A)\longrightarrow \End(\hat{S}(V)).
\end{equation*}
The map $\Kb$ defines a representation of the algebra $\hat{S}(\A)$, and for each $\Go_1,\Go_2$ we have
\begin{equation*}
\Kb(e_{\Go_1}\cdot e_{\Go_2})=\Kb(e_{\Go_1})\circ \Kb(e_{\Go_2}) ,~~~\text{and}~~~\Kb(e_{\Go_1}\sqcup e_{\Go_2})=\Kb(e_{\Go_1})\sqcup \Kb(e_{\Go_2}).
\end{equation*}
\end{prop}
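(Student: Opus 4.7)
The plan is to reduce all the properties of $K_{\Go}$ to the already-established properties of the vertex operators $K_{\Gol(c)}$ by Fubini, and then handle disjoint union and composition separately.

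First, for well-definedness, symmetry, and the degree computation: fix a linear order on the vertices of a labeled acyclic graph $\Gol$ and factorize $\Gol = \tilde{\Gol}(r) \cdot \cdots \cdot \tilde{\Gol}(1)$ as in the proof of the preceding representation statement, where each $\tilde{\Gol}(i)$ differs from a single-vertex graph on $\Gol(i)$ only by cylinders. Using the projections $L_c^{\pm} : \Lambda_{\Gol}(L) \to \Rp^{\partial \Gol(c)}$, Fubini identifies the integral defining $K_{\Gol}$ with an iterated composition of vertex operators, the cylinders becoming transparent under the convention $K_{0,1,1}=\id$. Since each $K_{\Gol(c)}$ sends symmetric polynomials to symmetric polynomials by Corollary~\ref{cor_operator_gn+n-}, so does $K_{\Gol}$; symmetrization over labels yields a well-defined $K_{\Go}\in \End(\hat{S}(V))$. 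The degree is additive: each vertex contributes $2d(\Gol(c))$, cylinders contribute $0$, totalling $2d(\Go)$.

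The extension to $\hat{S}(\A)$ uses that at fixed $(d,n^+,n^-)$ only finitely many acyclic stable graphs contribute after the projective limit of Paragraph~\ref{paragraph_proj_lim_cyl}, together with the degree additivity just established, exactly as in Proposition~\ref{prop_union_operator}. The disjoint union formula $\Kb(e_{\Go_1}\sqcup e_{\Go_2})=\Kb(e_{\Go_1})\sqcup \Kb(e_{\Go_2})$ is then proved verbatim as there: both the cone of relative cycles and the kernel factor across connected components, and Formula~\ref{formule_union_op_1} for the union of operators finishes it.

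The composition formula is the main point. By construction, the cone $\Lambda_{\Gol_1\cdot \Gol_2}(L^+,L^-)$ fibers over $\Rp^{k}$ (the lengths $x$ assigned to the $k$ glued boundaries) with fibers $\Lambda_{\Gol_1}(L^+,x)\times \Lambda_{\Gol_2}(x,L^-)$, and the natural Lebesgue measure factorizes accordingly. Applying Fubini, the iterated integral computing $(K_{\Gol_1}\circ K_{\Gol_2})\cdot P$ collapses onto an integral that matches $K_{\Gol_1\cdot \Gol_2}\cdot P/k!$, precisely if the twist factors $\prod_j x_j$ arising from the positive-boundary prefactors in the $K$-kernels (see Remark~\ref{remark_composition_1}) coincide with the edge-length weighting built into the definition of $K_{\Gol_1\cdot \Gol_2}$. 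This identification is the core calculation, and the factor $1/k!$ matches the algebra convention $e_{\Gol_1}\cdot e_{\Gol_2}=e_{\Gol_1\cdot \Gol_2}/k!$.

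The main obstacle is the bookkeeping: reconciling the $1/n^-!$ in the operator normalization, the $1/k!$ in the algebra definition, and the twist factors that distinguish $K$ from $V$. Acyclicity is essential throughout, as it guarantees boundedness of every polytope $\Lambda_{\Gol}(L)$ and allows us to perform the integrations in any order compatible with a linear order on vertices; this is also what justifies the factorization of $\Gol$ into single-vertex pieces invoked at the beginning.
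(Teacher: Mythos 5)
Your proposal is correct in outline, and it takes a route that is parallel to but genuinely distinct from the paper's. The paper does not reprove Proposition \ref{prop_compo_Kb} directly: it appeals to the abstract representation proposition of Paragraph \ref{paragraph_representation}, whose proof is entirely at the level of matrix coefficients --- $K_{\Go}[\alphab^+|\alphab^-]$ is written as a discrete sum over colorings $\alphab\in A^{X_1\Go}$ of the edges by basis indices, finiteness of the rows is checked using acyclicity, and the composition rule is verified as a matrix product $\sum_\beta K_{\Go_1}[\alphab^+|\beta]K_{\Go_2}[\beta|\alphab^-]$; uniqueness comes from the same factorization $\Gol=\tilde{\Gol}(r)\cdots\tilde{\Gol}(1)$ that you invoke. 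You instead work with the integral kernels and the polytopes $\Lambda_{\Gol}(L)$, replacing the discrete sum over edge colorings by Fubini on the fibration $\Lambda_{\Gol_1\cdot\Gol_2}(L)\to\Rp^k$ with fibers $\Lambda_{\Gol_1}(L^+,x)\times\Lambda_{\Gol_2}(x,L^-)$. This is the continuous shadow of the paper's discrete argument, and it has a real advantage: it connects directly to the geometric definition of $K_{\Gol}$ via $\Lambda_{\Gol}(L)$ given just before the proposition, something the paper's abstract argument never explicitly reconciles with the kernel formula. The paper's version makes the finiteness needed for membership in $\End(\hat{S}(V))$ slightly more transparent. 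Two points in your write-up deserve to be carried out rather than only flagged: (i) the measure-theoretic claim that $d\sigma_{\Gol_1\cdot\Gol_2}(L)$ disintegrates as $dx$ times the product of the fiber measures, which requires checking that the integer lattices match (the same verification as in the proof of Lemma \ref{lem_transfert_lemma}); and (ii) the twist bookkeeping, which is short once stated: each internal edge $\gamma$ is a positive boundary of exactly one component, so the product $\prod_c\prod_i L_i^+(c)$ over the kernels $K_{\Gol(c)}$ produces exactly one factor $l_\gamma$ per internal edge plus the external prefactor $\prod_i L_i^+$, matching both the weight $\prod_\gamma l_\gamma$ in Formula \ref{formula_VG} and the factor $\prod_j x_j$ of Remark \ref{remark_composition_1}.
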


\section{Cut-and-Join equations}
\label{section_cut_and_join_operator}

In this part, we prove Theorem \ref{thm_cut_and_join} stated in the introduction.

\subsection{Recursion for the volumes of moduli space and acyclic decomposition}

We recall the recursion for the kernels $K_{g,n^+,n^-}(L^+|L^-)$ given in \cite{barazer2021cuttingorientableribbongraphs}. We use the notation $[x]_+=\max\{x,0\}$.
\begin{thm}
\label{thm_rec_quad_K}
Let $(g,n^+,n^-)$ such as $2g-2+n^++n^->1$, for all values of $(L^+,L^-)$, we have the following formula:
\begin{eqnarray*}
(2g-2+n^++n^-)K_{g,n^+,n^-}(L^+|L^-)&=& \frac{1}{2}\sum_{i\neq j} L_i^+L_j^+ ~K_{g,n^+-1,n^-}(L_i^+ +L_j^+,L^+_{\{i,j\}^c}|L^-)\\
&+&\sum_{i}\sum_{j} L_i^+ ~K_{g,n^+,n^--1}([L_i^+-L_j^-]_+,L^+_{\{i\}^c}|L^-_{\{j\}^c})\\
&+&\frac{1}{2}\sum_{i} L_i^+~\int_0^{L_i^+} K_{g-1,n^++1,n^-}(x,L_i^+-x,L^+_{\{i\}^c}|L^-) dx\\
&+&\frac{1}{2}\sum_{i}\sum_{\underset{I_1^{\pm} \sqcup I_2^{\pm}=I^{\pm}}{g_1+g_2=g}}  L_i^+~K_{g_1,n^+_1+1,n^-_1}([x_1]_+,L^+_{I_1^+}|L^-_{I_1^-})~ K_{g_2,n^+_2+1,n^-_2}([x_2]_+,L^+_{I_2^+}|L^-_{I_2^-}).
\end{eqnarray*}
Where the sum in the RHS contains only stable surfaces, and we use the notations:
\begin{equation*}
 x_l = [|L_{I_1^-}^-|-|L_{I_2^+}^+| ]_+,
\end{equation*}
and the sets $I^\pm$ correspond to $\{1,...,n^\pm\}$ minus the positive boundary $i$.
\end{thm}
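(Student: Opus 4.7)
The plan is to recognize this recursion as the quadrivalent specialization of the acyclic decomposition developed in \cite{barazer2021cuttingorientableribbongraphs}, and derive it by cutting a distinguished edge of a quadrivalent directed ribbon graph. First I would set up the edge-count: a quadrivalent directed ribbon graph $\Ro$ of type $(g,n^+,n^-)$ satisfies $\#X_1\Ro = 4g-4+2n^++2n^-$ by Euler's formula (since $4\#X_0\Ro = 2\#X_1\Ro$ and $\#X_0\Ro-\#X_1\Ro+n^++n^- = 2-2g$). Biparticity of the dual forces each edge to lie between exactly one positive and one negative face, so summing over (positive boundary, incident edge) pairs counts each edge once. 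After weighting each such contribution by the edge-length, the $\tfrac12$ and the integration along the positive boundary combine to produce the factor $2g-2+n^++n^-$ on the LHS.

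The four terms on the RHS correspond to the four possible topological outcomes of cutting the distinguished edge $e\subset \beta_i^+$. At the two quadrivalent vertices of $e$, the next positive half-edge on the other side of the vertex lies on some boundary component, which determines the case:
\begin{enumerate}
    \item The opposite boundary is a distinct positive $\beta_j^+$, $j\neq i$: the surgery fuses $\beta_i^+$ and $\beta_j^+$ into a single positive boundary of length $L_i^+ + L_j^+$, yielding a graph of type $(g,n^+-1,n^-)$. The $\tfrac12$ kills the ordering of $(i,j)$.
    \item The opposite boundary is a negative $\beta_j^-$: the surgery pairs $\beta_i^+$ with $\beta_j^-$, leaving a new positive boundary of length $[L_i^+-L_j^-]_+$ and producing a graph of type $(g,n^+,n^--1)$.
    \item The cut is non-separating: it opens a handle and creates two new positive boundaries of lengths $x,L_i^+-x$ for $x\in[0,L_i^+]$, yielding type $(g-1,n^++1,n^-)$.
    \item The cut separates: two graphs of types $(g_k,n_k^++1,n_k^-)$ appear, with new positive boundary-length $[x_l]_+$ as stated; stability of both factors is enforced by the restriction of the sum.
\end{enumerate}
In each case one parametrizes the metric near $e$ by the lengths of $e$ and its neighbors on $\beta_i^+$, pushes forward the Lebesgue measure on $\Met(\Ro)$ along the surgery, and reads off the integrand on the smaller graph. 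The factors $L_i^+$ on the RHS record the initial choice of edge within $\beta_i^+$, distributed as lengths after the cut.

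The main obstacle, and the reason this recursion genuinely requires the construction of \cite{barazer2021cuttingorientableribbongraphs}, is to organize the edge-selection so that the induced decomposition of metrics on $\Met(\Ro)/\Aut(\Ro)$ partitions coherently and respects direction: one must ensure that each metric appears in exactly one piece with the correct multiplicity, which amounts to the canonicity of the acyclic cut. The truncations $[L_i^+-L_j^-]_+$ and $[x_l]_+$ encode the geometric vanishing of the corresponding polytope when the surgery would consume more length than the boundary contains, and must be matched with the convention that $K_{g,n^+,n^-}$ is supported on $\Lambda_{n^+,n^-}$. The bookkeeping of automorphism factors when passing from $V_\Ro$ to $K_{g,n^+,n^-}$ via $K_{g,n^+,n^-}=\sum_\Ro V_\Ro/\#\Aut(\Ro)$ supplies the remaining combinatorial weights.
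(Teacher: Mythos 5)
Before anything else, note that the paper does not prove Theorem \ref{thm_rec_quad_K} in this text: it is explicitly \emph{recalled} from \cite{barazer2021cuttingorientableribbongraphs}, and the only in-paper argument that touches it is the proof of the refined Lemma \ref{lem_refined_recursion}, which marks a quadrivalent \emph{vertex} appearance on $\beta_1^+$, invokes the vertex surgery theorem of the cited paper to excise a pair of pants containing that vertex and $\beta_1^+$, and recovers the Theorem by summing over $i$ and using $\sum_i\alpha_i^+(\Ro)=\#X_1\Ro=2(2g-2+n^++n^-)$. Your proposal follows the same architecture (mark an edge of a positive boundary, excise a pair of pants, defer the canonicity of the cut to the cited construction), so at the level of strategy it matches; the issues below are where your write-up would not go through as stated.

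Two steps are wrong as written. First, the factor $2g-2+n^++n^-$ on the left is a pure count: each edge lies in exactly one positive boundary, there are $\#X_1\Ro=2(2g-2+n^++n^-)$ of them, and your $\tfrac12$ halves this to the number of quadrivalent vertices. ``Weighting each contribution by the edge-length'' would instead produce $\tfrac12|L^+|$, and no integration along the positive boundary is involved; correspondingly, the factors $L_i^+L_j^+$ and $L_i^+$ on the right are not ``the initial choice of edge distributed as lengths'' but are exactly the kernels $K_{0,2,1}=L_1^+L_2^+\,\delta(\cdot)$ and $K_{0,1,2}=L_1^+\,\delta(\cdot)$ of the excised pair of pants, together with the twist on the gluing curves (Remark \ref{remark_composition_1}). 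Second, the surgery is not literally ``cutting the distinguished edge'': deleting an edge of a quadrivalent graph creates two trivalent vertices and exits the principal stratum. The correct operation cuts along the admissible curve supplied by the vertex surgery theorem, and the assignment to cases I--IV is not the local combinatorial dichotomy you describe (``the next positive half-edge on the other side of the vertex''); it is governed by that global, metric-dependent curve, which is precisely what the truncations $[L_i^+-L_j^-]_+$ and $[x_l]_+$ encode (case II, for instance, is only realizable when $L_j^-\le L_i^+$, a condition on the metric, not on the adjacency at the marked vertex). You do correctly identify the canonicity of the cut as the main obstacle and attribute it to \cite{barazer2021cuttingorientableribbongraphs}, but until the case assignment is routed through that theorem rather than through local adjacency, the partition of $\Met(\Ro)/\Aut(\Ro)$ your argument needs does not close up.
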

The RHS of the formula in Theorem \ref{thm_rec_quad_K} corresponds to all the ways to glue a directed pair of pants of type $(0,1,2)$ or $(0,2,1)$.\\

A second result, which is equivalent to Theorem \ref{thm_rec_quad_K}, is the following:

\begin{thm}
\label{thm_acyclic_decomp}
    The functions $K_{g,n^=,n^-}$ are given by 
    \begin{equation*}
        K_{g,n^+,n^-}=\sum_{\Go\in \acyclogen_{g,n^+,n^-}}\frac{n_{\Go} K_{\Go}}{d_{g,n^+,n^-}!\#\Aut(\Go)},
    \end{equation*}
    where the sum run over all the acyclic pants decompositions and $n_{\Go}$ is defined in paragraph \ref{paragraph_exponential_structure}.
\end{thm}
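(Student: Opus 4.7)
The plan is to prove Theorem~\ref{thm_acyclic_decomp} by induction on the degree $d=2g-2+n^++n^-$, using Theorem~\ref{thm_rec_quad_K} as the engine which peels off one pair of pants at a time. For the base case $d=1$, the surfaces are the two pairs of pants $P_\pm$; the set $\acyclogen_{g,n^+,n^-}$ contains only the one-vertex graph representing the pant itself, with trivial automorphism group, a unique linear order, and $K_{\Go}=K_{g,n^+,n^-}$, so both sides coincide tautologically.

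For the inductive step, I would first reinterpret the four terms on the right-hand side of Theorem~\ref{thm_rec_quad_K} as the four topological ways of attaching a pair of pants $P\in\{P_+,P_-\}$ to the positive side of a smaller directed stable surface $\Mo'$ of degree $d-1$: joining two positive boundaries via $P_+$, cutting across a positive and a negative boundary via $P_-$, self-gluing to reduce genus, and splitting into two components. Under the composition product on $S(\A)$ of Paragraph~\ref{paragraph_algebra_acyclic}, each such configuration is a product $v\cdot\Go'$ with $v$ the top pant and $\Go'$ an acyclic decomposition of $\Mo'$; by Proposition~\ref{prop_compo_Kb}, the kernel of the resulting gluing is exactly $K_{v\cdot\Go'}$. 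Substituting the induction hypothesis $K_{\Mo'}=\sum_{\Go'}\frac{n_{\Go'}K_{\Go'}}{(d-1)!\,\#\Aut(\Go')}$ and regrouping the double sum by the full graph $\Go=v\cdot\Go'$, the statement reduces to the combinatorial identity
\begin{equation*}
n_{\Go}=\sum_{v\in\max(\Go)}n_{\Go\setminus v},
\end{equation*}
which reflects the fact that a linear order on $\Go$ is determined by its maximal element together with a linear order on the complement, combined with an orbit-stabilizer argument relating $\#\Aut(\Go)$ to the stabilizers of the $\Aut(\Go)$-action on $\max(\Go)$. The factor $d$ coming from the left-hand side of the recursion is absorbed in the passage from $(d-1)!$ to $d!$.

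The main obstacle is the bookkeeping of symmetry factors in the degenerate cases: when the top pant has internal symmetries (the two positive legs of $P_+$ are exchangeable, which produces the $\frac12$ in the first and third terms of the recursion), when the removal of $v$ disconnects $\Go$ (the splitting term), or when $v$ is attached by a self-gluing that increases genus. Each of these must be checked individually, matching the explicit prefactors and index ranges in Theorem~\ref{thm_rec_quad_K} against the contributions to $\#\Aut(\Go)$ coming from peeling the top vertex. A conceptually cleaner variant would combine Formula~\ref{formula_exp_pants}, the morphism property of $\Kb$, and the Cut-and-Join equation to identify both sides simultaneously with $K(q)$; however, since Theorem~\ref{thm_acyclic_decomp} is meant as the key input for the second proof of the Cut-and-Join equation, the direct inductive argument above is the appropriate one.
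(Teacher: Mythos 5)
The paper itself does not prove Theorem \ref{thm_acyclic_decomp}: it is recalled from \cite{barazer2021cuttingorientableribbongraphs} and merely asserted to be equivalent to Theorem \ref{thm_rec_quad_K} (the reference obtains the decomposition geometrically, by iterating the vertex surgery that also underlies Lemma \ref{lem_refined_recursion}). So your plan --- deriving it from the one-step recursion by induction on $d$ --- is a legitimate and genuinely different route, and its skeleton is sound: the base case is tautological, the four terms of Theorem \ref{thm_rec_quad_K} are indeed the four ways of attaching a top pair of pants, the identity $n_{\Go}=\sum_{v\in\max(\Go)}n_{\Go\setminus v}$ is correct (a linear order is determined by its maximal last element together with a linear order on the complement), and you rightly observe that a maximal $P_+$-vertex, having both positive legs external, is hit twice by $\sum_{i\neq j}$, which accounts for the $\tfrac12$.

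The gap is that the step you defer as ``bookkeeping'' is the entire technical content of the proof, and as written the reduction does not close. First, the induction hypothesis must be stated for disconnected $\Mo'$ (the splitting term produces them), and there your identity is not enough: you need the behaviour of $n_{\Go}$ and $\#\Aut(\Go)$ under disjoint union, namely $n_{\Go_1\sqcup\Go_2}=\binom{d_1+d_2}{d_1}n_{\Go_1}n_{\Go_2}$ (shuffles of linear orders, which is exactly what makes $n_{\Go}/d(\Go)!$ multiplicative) together with the extra symmetric factors in $\Aut$ for interchangeable components; these must be checked to cancel against the $\frac{n_1^-!\,n_2^-!}{n^-!}$-type factors and the sum over partitions $I_1^\pm,I_2^\pm$ in term IV. Second, ``attaching $v$ by one leg'' is not the composition $v\cdot\Go'$ of Paragraph \ref{paragraph_algebra_acyclic}, which glues \emph{all} negative legs of the top factor to positive legs of the bottom one; you need the cylinder-padded product on $S_\infty(\A)$ (Figure \ref{figure_compo_A_infty}) and the corresponding form of Proposition \ref{prop_compo_Kb}, and then, in each of the four cases, an explicit orbit--stabilizer count relating the number of labeled gluing data $(i,j,I_1^\pm,I_2^\pm,\Go')$ that produce a given $\Go$ to the ratio $\#\Aut(\Go)/\#\Aut(\Go')$. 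Until these verifications are carried out, what you have is a credible plan rather than a proof.
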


\subsection{Cut-and-join equation for $K$}

We consider two pants gluing operators given by:
\begin{equation*}
P_{-}= K_{0,1,2} \sqcup id,~~~\text{and}~~~ P_{+}= K_{0,2,1}\sqcup id.
\end{equation*}
These operators correspond to adding a pair of pants with an arbitrary number of cylinders. We set:
\begin{equation*}
 W_1=P_++P_-.
\end{equation*}
We can easily compute the action of these operators:
\begin{lem}
\label{lem_action_P+_P-_1}
The action of the operators $P_{\pm}$ on $\hat{S}(V)$ is given by:
\begin{equation*}
P_{+}F(L)=\frac{1}{2}\sum_{i\neq j} L_iL_jF(L_i+L_j,L_{\{i,j\}^c}),~~~\text{and}~~~P_{-}F(L)= \frac{1}{2}\sum_i L_i \int_{0}^{L_i}F(x,L_i-x,L_{\{i\}^c})dx.
\end{equation*}
\end{lem}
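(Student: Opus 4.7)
The plan is to identify the pair-of-pants kernels $K_{0,2,1}$ and $K_{0,1,2}$ explicitly and then to unfold the $\sqcup\,\id$ construction.

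First I would determine the base kernels. Both directed pairs of pants of type $(0,2,1)$ and $(0,1,2)$ carry a unique quadrivalent ribbon graph, which has two edges; the boundary-length constraints determine these two edge lengths uniquely, the automorphism group of the ribbon graph acts trivially once boundaries are labeled, and hence the fibre $\Mc(\Mo,L)$ is a single point. Therefore $V_{0,2,1}\equiv V_{0,1,2}\equiv 1$ on their respective affine cones, which yields
\begin{equation*}
K_{0,2,1}(L_1^+,L_2^+|L_1^-)=L_1^+ L_2^+,\qquad K_{0,1,2}(L_1^+|L_1^-,L_2^-)=L_1^+,
\end{equation*}
on $\{L_1^+ + L_2^+ = L_1^-\}$ and $\{L_1^-+L_2^-=L_1^+\}$ respectively. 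If needed, this can be cross-checked directly against the Transfer Lemma (Lemma \ref{lem_transfert_lemma}) applied to the unique 4-valent graph with two edges.

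Next, plugging these into the definition of the integral operator together with the $\tfrac1{n^-!}$ prefactor and the normalization of $d\sigma_{n^-}^{|L^+|}$ (noting that $\Delta_1$ is a point while $\int_{t\cdot \Delta_2} F\, d\sigma_2^t=\int_0^t F(x,t-x)\, dx$) gives the base actions
\begin{equation*}
(K_{0,2,1}\cdot P)(L_1^+,L_2^+)=L_1^+ L_2^+\, P(L_1^++L_2^+),\qquad P\in S_1(V),
\end{equation*}
\begin{equation*}
(K_{0,1,2}\cdot P)(L^+)=\tfrac{L^+}{2}\int_0^{L^+} P(x,L^+-x)\, dx,\qquad P\in S_2(V).
\end{equation*}

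Finally, I would apply Formula \ref{formule_union_op_1} (or equivalently the description $K_1\sqcup K_2=\theta\circ(K_1\otimes K_2)\circ\theta^*$) to extend to $K_{0,2,1}\sqcup\id$ and $K_{0,1,2}\sqcup\id$ on a generic symmetric function $F$. For $P_+$ applied to $F$ of $m-1$ variables and producing a function of $m$ variables, the coproduct isolates a single ``input'' variable of $F$ while leaving the remaining $m-2$ variables unchanged; $K_{0,2,1}$ replaces this input variable by two output slots identified with a pair $\{i,j\}\subset\{1,\dots,m\}$, contributing $L_i L_j F(L_i+L_j,L_{\{i,j\}^c})$; reassembling by $\theta$ yields the sum over unordered pairs $\{i,j\}$, which rewrites as $\tfrac12\sum_{i\neq j}$. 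Analogously for $P_-$: one output variable $L_i$ is chosen to play the role of the negative boundary of the pants, the integral from the base action accounts for the split $(x,L_i-x)$, and the sum runs over $i\in\{1,\dots,m\}$, yielding the stated formula.

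The only substantive step is the first one---the unambiguous identification of the base kernels in the $d=1$ case, where the moduli space collapses to a point. Once these kernels are in hand, the remainder is a direct unfolding of the Fock-space formalism of Subsection \ref{subsection_boson}.
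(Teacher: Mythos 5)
Your proposal is correct and follows the same route as the paper: identify the kernels $K_{0,2,1}=L_1^+L_2^+\,\delta(L_1^++L_2^+-L_1^-)$ and $K_{0,1,2}=L_1^+\,\delta(L_1^-+L_2^--L_1^+)$ (equivalently, observe that the fibre $\Mc(\Mo,L)$ is a point for the two pairs of pants), then unfold $K\sqcup\id$ via Formula \ref{formule_union_op_1}. The paper's proof is just a terser version of this; your added justification of the base kernels and of the combinatorial factors ($\tfrac{1}{n^-!}$, the sum over unordered pairs giving $\tfrac12\sum_{i\neq j}$) is accurate.
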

\begin{proof}
 We have
\begin{equation*}
 K_{0,2,1}(L_1^+,L_2^+|L_1^-)=(L_1^+L_2^+)\delta(L_1^++L_2^+-L_1^-)~~~\text{and}~~~K_{0,1,2}(L_1^+|L_1^-,L_2^-)=L_1^+\delta(L_1^-+L_2^--L_1^+).
\end{equation*}
By using the formula for the union of two operators, we obtain the proposition.
\end{proof}
The recursion of Theorem \ref{thm_rec_quad_K} can be written in the following very simple way:

\begin{thm}[Cut-and-Join equation]
\label{thm_cut_and_join}
The operator $K(q)$ satisfies the following linear evolution called the Cut-and-Join equation
\begin{equation*}
\frac{d K}{dq}(q) = W_1 K(q).
\end{equation*}
With the initial condition $K(0)=id$, and then
\begin{equation*}
K(q)=\exp(qW_1).
\end{equation*}
\end{thm}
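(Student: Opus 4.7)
The plan is to differentiate $K$ in $q$ and match the result with the recursion of Theorem~\ref{thm_rec_quad_K}. Writing $K = \exp_\sqcup(K^\cf)$ with $K^\cf = K_{0,1,1} + \sum_{(g,n^+,n^-)\text{ stable}} q^{2g-2+n^++n^-} K_{g,n^+,n^-}$, I would differentiate and use the commutativity of $\sqcup$ to obtain
\[
\frac{\partial K}{\partial q} \;=\; \left(\sum_{\text{stable}} d\, q^{d-1} K_{g,n^+,n^-}\right) \sqcup K,
\]
where $d = 2g-2+n^++n^-$ (the cylinder term contributes $0$ since it is $q$-independent). Substituting the recursion of Theorem~\ref{thm_rec_quad_K} for $d\, K_{g,n^+,n^-}$ produces four families of terms. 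Each is geometrically an attachment of a pair of pants of type $(0,2,1)$ (the first and fourth families) or $(0,1,2)$ (the second and third families) to a smaller directed surface, possibly disconnected. Using the explicit forms of $P_\pm$ computed in Lemma~\ref{lem_action_P+_P-_1} together with Proposition~\ref{prop_union_operator}, the aggregate of these four families, once $\sqcup$'d with $K$, is exactly $(P_+ + P_-) K = W_1 K$. The initial condition $K(0) = \id$ is immediate from Lemma~\ref{lem_exp_cyl_id} since at $q=0$ only the cylinder term survives and $\exp_\sqcup(K_{0,1,1}) = \id$.

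Alternatively, a more conceptual proof would use the algebra of acyclic stable graphs from Subsection~\ref{subsection_alg_dir_surf}. By Formula~(\ref{formula_exp_pants}), in $\hat{S}_\infty(\A)$ one has $\exp(qP) = \sum_{\Go \in \acyclogen} \frac{n_\Go q^{d(\Go)}}{d(\Go)!\,\#\Aut(\Go)} \eb_\Go$, where $P = \eb_{0,1,2} + \eb_{0,2,1}$ and the exponential is taken for the composition product $\cdot$. Applying the morphism $\Kb$, which respects both $\sqcup$ and $\cdot$ (Proposition~\ref{prop_compo_Kb}), the left-hand side becomes $\exp(q\Kb(P)) = \exp(qW_1)$, while the right-hand side, combining Theorem~\ref{thm_acyclic_decomp} (for connected contributions) with the exponential structure (Proposition~\ref{prop_exp_A}) and $\Kb(\eb_\Go) = K_\Go \sqcup \id$, equals $K$. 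The identity $K = \exp(qW_1)$ implies both the evolution equation and the initial condition at once.

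The main obstacle, in either approach, is the matching of combinatorial factors. In the direct approach, one must carefully track the prefactors $\tfrac{1}{2}$ for symmetric gluings, the positive-part truncations $[\,\cdot\,]_+$ appearing in the self-gluing and separating-into-two-components terms, and verify that the $\sqcup K$ on the outside correctly absorbs the disconnected smaller pieces produced by the splitting terms of the recursion. In the conceptual approach, the delicate point is establishing $\Kb(\exp(qP)) = K$: one must combine Theorem~\ref{thm_acyclic_decomp} (which controls only connected kernels) with Proposition~\ref{prop_exp_A} and the identity $\exp_\sqcup(K_{0,1,1}) = \id$ from Lemma~\ref{lem_exp_cyl_id} to pass from connected pant graphs to all acyclic pant graphs with the right counting weights $n_\Go/(d(\Go)!\,\#\Aut(\Go))$.
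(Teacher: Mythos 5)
Your proposal follows the paper's own two proofs almost exactly: the direct route differentiates $K=\exp_\sqcup(K^{\cf})$ and substitutes the recursion of Theorem~\ref{thm_rec_quad_K}, and the conceptual route applies the morphism $\Kb$ to Formula~(\ref{formula_exp_pants}) and identifies the image of the right-hand side with $K(q)$ via Theorem~\ref{thm_acyclic_decomp}; the second outline is accurate as stated. One correction to the first route: you attribute the fourth (splitting) family to a pair of pants of type $(0,2,1)$, but it is an attachment of $(0,1,2)$. Since $K_{0,2,1}$ has a single negative boundary, $P_+=K_{0,2,1}\sqcup\id$ applied to $\exp_\sqcup(K^{\cf})$ acts inside a single connected factor and reproduces only the first family; it is $P_-=K_{0,1,2}\sqcup\id$, with its two inputs, that yields the self-gluing term when both inputs hit the same connected factor and the splitting term (together with the second family, which is the degenerate splitting where one factor is a cylinder $K_{0,1,1}$) when they hit two distinct factors --- this is precisely the paper's Lemma~\ref{lem_action_P}, and getting this grouping right is where the factors of $\tfrac{1}{2}$ and the absorption of the disconnected pieces that you identify as the main obstacle are actually resolved.
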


\begin{rem}
The variable $q$ is not essential here because it is half of the degree. Indeed, the last equation can be written as
\begin{equation*}
[D,K]=2W_1 K,~~~\text{or}~~~[D,K]=2P_+K+2P_-K.
\end{equation*}
\end{rem}

We first propose a straightforward proof.

\begin{proof}
There are several transformations to do in order to go from the formula in Theorem \ref{thm_rec_quad_K} to the Cut-and-Join equation. First of all, we put together terms of type $I,IV$ in the recursion; the kernel $K_{0,1,1}$ is constant equal to $1$, and then we can write
\begin{equation*}\sum_{i}\sum_{j} L_i^+ ~K_{g,n^+,n^--1}([L_i^+-L_j^-]_+,L^+_{\{i\}^c}|L^-_{\{j\}^c})=\sum_{i}\sum_{j} L_i^+ ~K_{g,n^+,n^--1}(x_1,L^+_{\{i\}^c}|L^-_{\{j\}^c})K_{0,1,1}(x_2|L^-_{j}).
\end{equation*}
 With the notations $x_1=[|L_{I_1^-}^-|-|L_{I_1^+}^+|]_+=[L_{i}^+-L_{j}^-]_+$ and $x_2=L_j^-=[|L_{I_2^-}^-|-|L_{I_2^+}^+|]_+$, the partitions are given by $I_1^+=\{i\}^c,I_1^-=\{j\}^c$ and $I_2^+=\emptyset,I_2^-=\{j\}$. Using these notations, the second and fourth lines fit together and lead to the new recursion:
 \begin{eqnarray*}
(2g-2+n^++n^-)K_{g,n^+,n^-}(L^+|L^-)&=& \frac{1}{2}\sum_{i\neq j} L_i^+L_j^+ ~K_{g,n^+-1,n^-}(L_i^+ +L_j^+,L^+_{\{i,j\}^c}|L^-)\\
&+&\frac{1}{2}\sum_{i} L_i^+~\int_0^{L_i^+} K_{g-1,n^++1,n^-}(x,L_i^+-x,L^+_{\{i\}^c}|L^-) dx\\
&+&\frac{1}{2}\sum_{i}\sum_{\underset{I_1^{\pm} \sqcup I_2^{\pm}=I^{\pm}}{g_1+g_2=g}} L_i^+~K_{g_1,n^+_1+1,n^-_1}(x_1,L^+_{I_1^+}|L^-_{I_1^-})~ K_{g_2,n^+_2+1,n^-_2}(x_2,L^+_{I_2^+}|L^-_{I_2^-}).
\end{eqnarray*}
This time the last term also includes unstable terms of type $(0,1,1)$. We can apply both sides of this equation to a vector $e_\mu\in S(V)$. The LHS is
\begin{equation*}
(2g-2+n^++n^-)K_{g,n^+,n^-}\cdot e_\mu.
\end{equation*}
Moreover, by using Lemma \ref{lem_action_P+_P-_1}, the first line becomes
\begin{eqnarray*}
&& \frac{1}{2~~n^-!}\sum_{i\neq j} (L_i^+L_j^+)~\int_{L^-\in |L^+|\Delta_{n^-}}K_{g,n^+-1,n^-}(L_i^+ +L_j^+,L^+_{\{i,j\}^c}|L^-)e_\mu(L^-)d\sigma_{n^-}^{|L^-|}\\
 &=& \frac{1}{2}\sum_{i\neq j} (L_i^+L_j^+)(K_{g,n^+-1,n^-}\cdot e_\mu) (L_i^+ +L_j^+,L^+_{\{i,j\}^c})\\
 &=&(P_+\circ K_{g,n^+-1,n^-})\cdot e_\mu.
\end{eqnarray*}

Similarly, we can treat the third line using the Fubini Theorem.
\begin{eqnarray*}
&&\frac{1}{2~~n^-!}\sum_{i} L_i^+~\int_{L^-\in |L^+|\cdot\Delta_{n^-}} \int_0^{L_i^+} K_{g-1,n^++1,n^-}(x,L_i^+-x,L^+_{\{i\}^c}|L^-)e_\mu(L^+) dx d\sigma_{n^-}^{|L^-|}\\
&=&\frac{1}{2}\sum_{i} L_i^+\int_0^{L_i^+} (K_{g-1,n^++1,n^-}\cdot e_\mu)(x,L^+_i-x,L^+_{\{i\}^c})dx\\
&=&(P_- \circ K_{g-1,n^++1,n^-})\cdot e_\mu.
\end{eqnarray*}
The last step is to recover the disconnected term. We first consider the contraction operator:
\begin{equation*}
C: \hat{S}(V)\otimes \hat{S}(V)\longrightarrow \hat{S}(V),
\end{equation*}
defined by
\begin{equation*}
C(f\otimes g)=\frac{1}{2}\sum_{i}\sum_{I_1^{\pm} \sqcup I_2^{\pm}=I^{\pm}} L_i^+~\int_0^{L_i^+}f(x,L^+_{I_1^+})~ g(L_i^+-x,L^+_{I_2^+})dx.
\end{equation*}
Secondly, for each partition $(I_1^-,I_2^-)$ we can write:
\begin{equation*}
e_\mu(L^-)=\sum_{\mu_1+\mu_2=\mu}e_{\mu_1}(L^-_{I_1^-})e_{\mu_2}(L^-_{I_2^-}).
\end{equation*}
Now, rewriting the last line of the recursion, we need to carefully compute the new domain of integration, and we obtain:
\begin{eqnarray*}
&&\frac{1}{2}\sum_{i}\sum_{\underset{I_1^{\pm} \sqcup I_2^{\pm}=I^{\pm}}{g_1+g_2=g}} \sum_{\mu_1+\mu_2=\mu} L_i^+~ \frac{n_1^-!n_2^-!}{n^-!}\int_{0}^{L_i^+}(K_{g_1,n^+_1+1,n^-_1}\cdot e_{\mu_1})(x,L^+_{I_1^+})~ (K_{g_2,n^+_2+1,n^-_2}\cdot e_{\mu_2})(x-L_i^+,L^+_{I_1^+})dx\\
&=& \frac{1}{2}\sum_{i}\sum_{\underset{I_1^{+} \sqcup I_2^{+}=\{i\}^c}{g_1+g_2=g}} \sum_{\mu_1+\mu_2=\mu} L_i^+~ \int_{0}^{L_i^+}(K_{g_1,n^+_1+1,n^-_1}\cdot e_{\mu_1})(x,L^+_{I_1^+})~ (K_{g_2,n^+_2+1,n^-_2}\cdot e_{\mu_2})(x-L_i^+,L^+_{I_1^+})dx\\
&=&\sum_{\mu_1+\mu_2=\mu} C((K_{g_1,n^+_1+1,n^-_1}\cdot e_{\mu_1}) \otimes (K_{g_2,n^+_2+1,n^-_2}\cdot e_{\mu_2})).
\end{eqnarray*}
The first equality is the result of integration; in the second, the inverse of the binomial coefficient absorbs the sum over $(I_1^-,I_2^-)$. To finish the proof, we use the following equality that shows how $P_-$ acts on disconnected objects:
\begin{lem}
\label{lem_action_P}
If $x^{\cf}=\sum_{g,n^+,n^-} x_{g,n^+,n^-}\in \hat{S}(V)$ and $x=\exp_{\sqcup}(x^{\cf}$), then we have
\begin{equation*}
P_-x=(\sum_{g,n^+,n^-} P_-x_{g,n^+,n^-} +\sum_{g_i,n^+_i,n^-_i} C (x_{g_1,n^+_1,n^-_1}\otimes x_{g_2,n^+_2,n^-_2}))\sqcup x.
 \end{equation*}
\end{lem}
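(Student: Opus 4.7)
The plan is to pass to the formal series picture $\hat{S}(V)\simeq\Q[[\tb]]$ via the isomorphism $Z$, where the symmetric product $\sqcup$ becomes the ordinary product and $P_-=K_{0,1,2}\sqcup\id$ becomes an explicit second-order differential operator. The statement then reduces to the Leibniz identity for such an operator applied to an exponential.

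First I would identify $P_-$ as a differential operator. By Lemma \ref{lem_exp_cyl_id} and Proposition \ref{prop_unstable_diff_operators}, $P_-\in\Dh(V)$, and from the action on symmetric monomials given in Lemma \ref{lem_action_P+_P-_1} (using the Beta integral $\int_0^L x^a(L-x)^b/(a!b!)\,dx=L^{a+b+1}/(a+b+1)!$) one finds
\[
P_- \;=\; \tfrac{1}{2}\sum_{i,j}(i+j+2)\,t_{i+j+2}\,\partial_i\partial_j,
\]
which is purely second-order in the derivatives with no lower-order terms. Alternatively, this follows directly from Proposition \ref{prop_unstable_diff_operators} applied to the matrix of $K_{0,1,2}$, which is concentrated on rows of length two.

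Next, setting $F:=Z_{x^{\cf}}=\sum_{g,n^+,n^-} Z_{x_{g,n^+,n^-}}$, the relation $x=\exp_{\sqcup}(x^{\cf})$ becomes $Z_x=\exp(F)$. Applying $P_-$ and using $\partial_i\partial_j\exp(F)=\bigl[\partial_i\partial_j F+(\partial_i F)(\partial_j F)\bigr]\exp(F)$ yields
\[
P_-\exp(F) \;=\; \Bigl[\,P_- F \;+\; \tfrac{1}{2}\sum_{i,j}(i+j+2)\,t_{i+j+2}(\partial_i F)(\partial_j F)\,\Bigr]\exp(F).
\]
Expanding $F$ turns the linear piece into $\sum_{g,n^+,n^-}P_-Z_{x_{g,n^+,n^-}}$ and the quadratic piece into an unrestricted double sum over pairs of triples $(g_i,n^+_i,n^-_i)$; the trailing $\exp(F)=Z_x$ becomes $\sqcup\,x$ under $Z^{-1}$.

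The main obstacle is the last identification: showing that the bilinear form $(f,g)\mapsto\tfrac{1}{2}\sum_{i,j}(i+j+2)\,t_{i+j+2}(\partial_i f)(\partial_j g)$ on $\Q[[\tb]]$ corresponds under $Z^{-1}$ to the contraction $C(\cdot\otimes\cdot)$ defined in the proof. I would verify this by a direct calculation on basis monomials: the sum over partitions $I_1^+\sqcup I_2^+$ of positive variables in the integral formula for $C$ matches the factorization of monomials in the $\tb$-variables, the prefactor $\tfrac{1}{2}$ matches, and the Beta-integral identity converts $L_i^+\int_0^{L_i^+}$ into the coefficient $(i+j+2)\,t_{i+j+2}$ with the correct multiplicity. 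Once this identification is in place, translating the displayed identity back to $\hat{S}(V)$ via $Z^{-1}$ recovers exactly the expression in the lemma. This reflects the fact that $K_{0,1,2}\sqcup\id$, as a differential operator, is the polarization of the contraction $C$ together with the ``straight'' action of $P_-$ on each connected component $x_{g,n^+,n^-}$.
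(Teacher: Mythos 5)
Your argument is correct and is essentially the paper's own (very terse) proof made explicit: the paper simply notes that $K_{0,1,2}$ is a tensor of type $(1,2)$, so its two inputs either land in the same connected factor of $\exp_{\sqcup}(x^{\cf})$ or in two distinct ones, which is exactly the decomposition $\partial_i\partial_j e^F=(\partial_i\partial_j F+\partial_iF\,\partial_jF)e^F$ you carry out in the $\Q[[\tb]]$ coordinates. The remaining identification of the bilinear form $\tfrac{1}{2}\sum_{i,j}(i+j+2)t_{i+j+2}(\partial_i f)(\partial_j g)$ with $C$ is the same monomial computation (via the Beta integral) that the paper performs when expressing $P_-$ as a differential operator, so no new gap is introduced.
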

The proof of Lemma \ref{lem_action_P} uses the fact that $K_{0,2,1}$ is a tensor of type $(2,1)$ and $K_{0,1,2}$ is of type $(1,2)$.
\end{proof}

We give a second proof that uses acyclic stable graphs,  we can rewrite Theorem \ref{thm_acyclic_decomp} in term of operators we still have 
\begin{equation*}
K_{g,n^+,n^-}=\underset{{\Go \in \acyclogen_{g,n^+,n^-}}}{\sum} \frac{n_{\Go} }{d_{g,n^+,n^-}! \#\Aut(\Go)}K_{\Go}.
\end{equation*}
Then using $\exp_\sqcup$ we have 
\begin{equation*}
\label{formula_K_exp}
K(q)=\sum_{\Go \in \acyclogen} q^{d(\Go)}\frac{n_{\Go}K_{\Go}}{d(\Go)!\#\Aut(\Go)}.
\end{equation*} 

\begin{proof}
In Proposition \ref{prop_exp_A} we state a formula that relate sums over acyclic stable graphs to exponential in the algebra $\hat{S}_{\infty}(\A)$.
\begin{equation*}
\exp(\eb_{0,2,1}+\eb_{0,1,2})=\sum_{\Go \in \acyclogen}\frac{n_{\Go}}{d(\Go)!\#\Aut(\Go)}\eb_{\Go}.
\end{equation*}

In Proposition \ref{prop_compo_Kb} we see that $\Kb$ is a morphism of algebra, we can apply it on both sides of this equation
\begin{eqnarray*}
\Kb(\eb_{0,2,1})=\Kb(e_{0,2,1})\sqcup \Kb(\exp_{\sqcup}(e_{0,1,1}))&=&K_{0,2,1}\sqcup id =P_+\\
\Kb(\eb_{0,1,2})=\Kb(e_{0,1,2})\sqcup \Kb(\exp_{\sqcup}(e_{0,1,1}))&=&K_{0,1,2}\sqcup id =P_- .
\end{eqnarray*}
And then
\begin{equation*}
\Kb(\exp(q(\eb_{0,2,1}+\eb_{0,1,2})))=\exp(\Kb(q(\eb_{0,2,1}+\eb_{0,1,2})))=\exp(qW_1).
\end{equation*}
Similarly, we also have
\begin{eqnarray*}
\Kb(\sum_{\Go \in \widetilde{\acyclogen}}\frac{n_{\Go}q^{d(\Go)}}{d(\Go)!\#\Aut(\Go)}e_{\Go})&=&\sum_{\Go \in \widetilde{\acyclogen}}\frac{n_{\Go}q^{d(\Go)}}{d(\Go)!\#\Aut(\Go)}\Kb(e_{\Go})\\ &=&\sum_{\Go \in \widetilde{\acyclogen}}\frac{n_{\Go}q^{d(\Go)}}{d(\Go)!\#\Aut(\Go)}K_{\Go}\\ &=& K(q).
\end{eqnarray*}
Where the last line is obtained with Formula \ref{formula_K_exp}. Finally, we obtain
\begin{equation*}
K=\exp(qW_1).
\end{equation*}
\end{proof}

\paragraph{Interpretation in terms of differential operators:}

In this part, we rewrite Theorem \ref{thm_cut_and_join} by using the isomorphism $\hat{S}(V)\simeq \Q[[\tb]]$, which was presented in Section \ref{subsection_boson}. Using the fact that
\begin{equation*}
K=K^{\s}\sqcup id,
\end{equation*}
and Proposition \ref{prop_unstable_diff_operators}, we already know the operator $K$ is expressed in terms of annihilation and creation operators. In this part, we give an explicit expression of the Cut-and-Join operators.

\begin{lem}
Operators $P_{-}$ and $P_{+}$ are given by the formula
\begin{equation*}
P_{-}=\frac{1}{2}\sum_{k,l}(k+l+2)t_{k+l+2}\partial_k\partial_l,~~~\text{and}~~~ P_{+}=\frac{1}{2}\sum_{k,l}(k+1)(l+1)t_{k+1}t_{l+1}\partial_{k+l}.
\end{equation*}
\end{lem}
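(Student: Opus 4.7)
The plan is to invoke Proposition \ref{prop_unstable_diff_operators}, which converts any operator of the form $K\sqcup \id$ into the formal differential operator
$$K\sqcup \id \;=\; \sum_{\mu^+,\mu^-} K[\mu^+|\mu^-]\,\frac{\tb^{\mu^+}\partial_{\mu^-}}{\mu^+!}.$$
Since $P_+=K_{0,2,1}\sqcup \id$ and $P_-=K_{0,1,2}\sqcup \id$, the entire statement reduces to computing the matrix coefficients of $K_{0,2,1}$ and $K_{0,1,2}$ on the canonical basis $(e_\mu)$ of $S(V)$ and then reorganizing the resulting double sum.

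For $P_+$ I would first apply the operator to the one-variable basis vector $e_{(k)}(L^-)=(L^-)^k/k!$. The $\delta$-function kernel $L_1^+L_2^+\delta(L_1^++L_2^+-L^-)$ collapses the defining integral to $K_{0,2,1}\cdot e_{(k)}(L_1,L_2)=L_1L_2(L_1+L_2)^k/k!$. A binomial expansion, rewritten in the normalized monomials $L^a/a!$, yields
$$K_{0,2,1}\cdot e_{(k)}\;=\!\!\!\sum_{\substack{a+b=k+2\\ a,b\ge 1}}\!\!\! ab\;e_a(L_1)\,e_b(L_2),$$
from which one reads off matrix entries $K_{0,2,1}[\mu^+|(k)]=ab$ when $\mu^+(a)=\mu^+(b)=1$ with $a<b$, and $a^2$ when $\mu^+(a)=2$. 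Substituting into the formula from the Proposition, the factor $1/\mu^+!$ balances the two orderings of the unordered pair $\{a,b\}$ against the isolated diagonal term $a=b$, and after the reindexation $a=k+1,\,b=l+1$ everything consolidates into $\frac{1}{2}\sum_{k,l\ge 0}(k+1)(l+1)\,t_{k+1}t_{l+1}\partial_{k+l}$.

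For $P_-$ I would test $K_{0,1,2}$ on the two-variable symmetric basis elements $e_\mu$ with $n(\mu)=2$. The kernel $L^+\delta(L_1^-+L_2^--L^+)$, combined with the prefactor $1/n^-!=1/2$ and the Euler Beta identity $\int_0^{L^+}x^k(L^+-x)^l\,dx=(L^+)^{k+l+1}\,k!\,l!/(k+l+1)!$, yields the clean values $K_{0,1,2}\cdot e_{(k,l)}=(k+l+2)\,e_{(k+l+2)}$ for $k\ne l$ and $K_{0,1,2}\cdot e_{(k,k)}=(k+1)\,e_{(2k+2)}$. Plugging these into the formula above, the asymmetry between the two cases is absorbed by the ordered-versus-unordered sum that produces $\partial_k\partial_l$ against $\partial_k^2$, so both cases consolidate into the single symmetric expression $\frac{1}{2}\sum_{k,l\ge 0}(k+l+2)\,t_{k+l+2}\partial_k\partial_l$.

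The only real difficulty will be the bookkeeping of the multiplicities of the partitions: the factor $1/\mu^+!$, the normalization of $e_\mu$, and the dichotomy between partitions with and without a repeated index must all be tracked carefully so that the matrix expansion reorganizes into a symmetric double sum over $(k,l)\in\N^2$ with a common prefactor $1/2$. As a sanity check I would verify directly, via the explicit correspondence $e_\mu\leftrightarrow \tb^\mu/\mu!$, that the proposed differential operators reproduce the action prescribed by Lemma \ref{lem_action_P+_P-_1} on simple vectors such as $t_k$ and $t_kt_l$.
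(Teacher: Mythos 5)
Your proposal is correct and follows essentially the same route as the paper: both invoke Proposition \ref{prop_unstable_diff_operators} to identify the coefficients of the differential operator with the matrix entries of $K_{0,2,1}$ and $K_{0,1,2}$, and both compute those entries from the explicit $\delta$-function kernels (the paper works directly on the tensor basis $e_i\otimes e_j$, which sidesteps some of the partition-multiplicity bookkeeping you describe, but the computation is the same). Your stated values $K_{0,2,1}\cdot e_{(k)}=\sum_{a+b=k+2}ab\,e_a\otimes e_b$ and $K_{0,1,2}\cdot(e_i\otimes e_j)=\tfrac12(i+j+2)e_{i+j+2}$ agree with the paper's.
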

\begin{proof}
By definition, $P_\pm$ can be written as $P_\pm^{\s}\sqcup id$ and then can be expressed in terms of creation and annihilation operators. Under the map to $\Q[[\tb]]$, they are formal differential operators. Moreover, using Proposition \ref{prop_unstable_diff_operators}, we know that the coefficient in front of $\tb^{\mu^+}\partial_{\mu^-}$ in $P_+$ ( resp $P_-$) is the matrix coefficient of $K_{0,2,1}$ (resp $K_{0,1,2}$). By using
\begin{equation*}
K_{0,2,1}\cdot f=L_1L_2f(L_1+L_2).
\end{equation*}
We obtain
\begin{equation*}
K_{0,2,1}\cdot e_k=\sum_{i+j=k}(i+1)(j+1)e_{i+1}\otimes e_{j+1}.
\end{equation*}
Which gives us the formula
\begin{equation*}
P_+=K_{0,2,1}\sqcup id=\frac{1}{2}\sum_{i+j=k}(i+1)(j+1)t_{i+1}t_{j+1}\partial_k.
\end{equation*}
Similarly, using
\begin{equation*}
K_{0,1,2}\cdot f=\frac{L_1}{2}\int_0^{L_1}f(x,L_1-x)dx,
\end{equation*}
we obtain
\begin{equation*}
K_{0,2,1}\cdot e_i\otimes e_j=\frac{1}{2}(i+j+2)e_{i+j+2}.
\end{equation*}
And then
\begin{equation*}
P_{-}=\frac{1}{2}\sum_{k,l}(i+j+2)t_{k+l+2}\partial_k\partial_l.
\end{equation*}
\end{proof}

\section{Partition function and combinatorial interpretations}
\label{section_partition_function}
\subsection{The partition function}
Let $(g,n^+,n^-)$, in this part, we consider the following integrals:
\begin{equation}
\label{formula_def_G_{g,n+,n-}}
Z_{g,n^+,n^-}(L)=\frac{1}{n^-!}\int_{L^-\in |L|\cdot \Delta_{n^-}} K_{g,n^+,n^-}(L|L^-)d\sigma_{n^-}^{|L|}.
\end{equation}
According to Proposition \ref{prop_transfert_lemma}, these functions are homogeneous symmetric polynomials of degree $4g-4+2n^++2n^-=2d_{g,n^+,n^-}$. In the case $(0,1,1)$ we have $ Z_{0,1,1}=1$, which corresponds to the amplitude of the cylinder. We can generalize this definition for disconnected surfaces in a straightforward way by using $K_{\Mo}$ and integrate over $\Delta_{\Mo}(L)$. In this case $Z_{\Mo}$ is also an homogeneous polynomial of degree $2d(\Mo)$. As in \ref{paragraph_operator_proj_lim} the vacuum is defined by
\begin{equation*}
 \ez=\exp_{\sqcup}(e_0).
\end{equation*}
It corresponds to the constant polynomial "in arbitrarily many variables". Then, we have
\begin{equation*}
Z_{\Mo}=K_{\Mo} \cdot \ez.
\end{equation*}
We can generalize this by linearity using the operator $\Kb$ defined in Section \ref{section_operator_volumes}. We can set
\begin{equation*}
\Zb(x)=\Kb(x)\cdot \ez,~~~\forall x\in \M.
\end{equation*}
As $\Zb$ is compatible with the graduation, then $\Zb$ extends to the formal completion $\hat{\M}$. Moreover, the image of a symmetric element $e_{\Mo}\in S(\M)$ is a symmetric "polynomial". Putting that together, we see that the operator $\Zb$ restricts to a linear map
\begin{eqnarray*}
\Zb: \hat{S}(\M)&\longrightarrow& \hat{S}(V) \\
x~~&\longrightarrow& \Kb(x)\cdot \ez.
\end{eqnarray*}
Moreover, using properties of $\ez$, we obtain the following proposition:
\begin{prop}
\label{prop_G_morphism}
The linear map $\Zb$ is a morphism of commutative algebras; we have $\Zb(x\sqcup y)=\Zb(x)\sqcup \Zb(y)$.
\end{prop}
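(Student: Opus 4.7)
The plan is to reduce the claim to the fact (already proved in Proposition \ref{prop_union_operator}, extended to $\hat{S}(\M)$ by Lemma \ref{lem_exp_cyl_id}) that $\Kb$ is a morphism for $\sqcup$, together with a single computation showing that the vacuum $\ez$ is \emph{group-like} for the coproduct $\theta^{*}$, namely $\theta^{*}\ez = \ez \otimes \ez$. Given this, for any $x, y \in \hat{S}(\M)$ we obtain
\begin{equation*}
\Zb(x \sqcup y) = \Kb(x \sqcup y)\cdot \ez = (\Kb(x) \sqcup \Kb(y)) \cdot \ez,
\end{equation*}
and applying the definition $K_{1}\sqcup K_{2} = \theta \circ (K_{1}\otimes K_{2}) \circ \theta^{*}$ from Paragraph \ref{paragraph_union_operator} gives
\begin{equation*}
(\Kb(x) \sqcup \Kb(y)) \cdot \ez = \theta\bigl(\Kb(x)\ez \otimes \Kb(y)\ez\bigr) = \Zb(x) \sqcup \Zb(y),
\end{equation*}
which is the required identity.

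Thus the only thing to verify is that $\theta^{*}\ez = \ez \otimes \ez$. Since $\ez = \exp_{\sqcup}(e_{0}) = \sum_{k\ge 0} e_{k(0)}$ in $\hat{S}_{\infty}(V) \subset \hat{S}(V)$, I would apply the explicit coproduct formula \ref{formula_coproduct_vector}, $\theta^{*}e_{\mu} = \sum_{\mu_{1}+\mu_{2} = \mu} e_{\mu_{1}} \otimes e_{\mu_{2}}$, to each term $e_{k(0)}$. This yields $\theta^{*}e_{k(0)} = \sum_{i+j=k} e_{i(0)} \otimes e_{j(0)}$, and summing over $k$ factorises as
\begin{equation*}
\theta^{*}\ez = \sum_{k\ge 0}\sum_{i+j=k} e_{i(0)} \otimes e_{j(0)} = \Bigl(\sum_{i\ge 0} e_{i(0)}\Bigr) \otimes \Bigl(\sum_{j\ge 0} e_{j(0)}\Bigr) = \ez \otimes \ez.
\end{equation*}

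The point I expect to require the most care is the convergence: $\ez$ lies in the formal completion, $\theta^{*}$ is only defined degree by degree in Paragraph \ref{paragraph_union_operator}, and $\Kb(x), \Kb(y)$ may contain components of arbitrarily large length. However, everything is controlled by the degree graduation $d$: since each $K_{\Mo}$ is homogeneous of degree $2d(\Mo)$ and $\ez$ is homogeneous of degree $0$, and since for fixed total degree $d$ only finitely many surfaces contribute to $x, y$ (after projecting onto $\hat{S}^{d}(\M)$), all the sums above are finite degree by degree. So the formal manipulation above is legitimate in $\hat{S}(V)$, and combining it with the intertwining identity of the first paragraph finishes the proof.
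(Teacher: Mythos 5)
Your proof is correct and follows essentially the same route as the paper: the paper reduces the claim to Formula \ref{formule_union_op_1} applied to the components $e_{k(0)}$ of $\ez$, which is exactly your observation that $\ez$ is group-like for the coproduct, $\theta^{*}\ez=\ez\otimes\ez$. You simply make explicit (and verify more carefully, including the convergence bookkeeping) what the paper leaves implicit in the one-line chain $Z_{\Mo_1\sqcup\Mo_2}=(K_{\Mo_1}\sqcup K_{\Mo_2})\cdot\ez=(K_{\Mo_1}\cdot\ez)\sqcup(K_{\Mo_2}\cdot\ez)$.
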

\begin{proof}
Using Formula \ref{formule_union_op_1} in Section \ref{section_operator_volumes} , we have for each $e_{\Mo_1},e_{\Mo_2}\in S(\M)$
\begin{equation*}
Z_{\Mo_1\sqcup \Mo_2}= K_{\Mo_1\sqcup \Mo_1}\cdot \ez = (K_{\Mo_1}\sqcup K_{\Mo_1})\cdot \ez=(K_{\Mo_1}\cdot \ez) \sqcup (K_{\Mo_1}\cdot \ez)=Z_{\Mo_1}\sqcup Z_{\Mo_2}
\end{equation*}
\end{proof}
\paragraph{Partition function:}
The partition function is the image of the vacuum by $K$
\begin{equation*}
Z=K\cdot \ez== \sum_{\Mo} q^{d(\Mo)}Z_{\Mo}.
\end{equation*}
As before, we can also consider $Z^{\s},Z^{\cf}$ and $Z^{\cf,\s}$, which are associated with stable, connected surfaces and connected, stable surfaces. We derive the relation $K=\exp_{\sqcup}(K^{\cf})$ in Lemma \ref{lem_exp(Kc)=K} and using Proposition \ref{prop_G_morphism} it immediately implies the same relation for $Z$
\begin{equation*}
\label{formula_G_exp_Gc}
Z=\exp_{\sqcup}(Z^{\cf}).
\end{equation*}
Moreover, a stable operator is related to $K$ by the formula $K=K^{\s}\sqcup~id.$ And then we have the relation:
\begin{equation*}
Z=Z^{\s}\sqcup \ez.
\end{equation*}
Then $Z$ belongs to $\hat{S}_\infty(V)$ the space of symmetric polynomials in an infinite number of variables. In terms of formal series, by using the results of Section \ref{subsection_boson}, it is equivalent to
\begin{equation*}
Z(q,\tb)= \exp(t_0)~Z^{\s}(q,\tb^*).
\end{equation*}
Where $\tb^*=(t_1,t_2,...)$ is independent of $t_0$.

Using formula \ref{formula_G_exp_Gc}, the non-connected partition function is the exponential of the connected partition function,
\begin{equation*}
 Z(q,\tb)=\exp(Z^{\cf}(q,\tb)).
\end{equation*}
In this formula, the exponential is the one on the space of formal series. 
\begin{rem}
The variable $q$ is not essential; the operator $q^{\frac{D}{2}}$ corresponds to the change of variables $t_i \to q^{\frac{i}{2}}t_i$.
\end{rem}

\subsection{Combinatorial interpretations}
\paragraph{First combinatorial interpretation:}
Using Proposition \ref{prop_transfert_lemma}, we can find a combinatorial interpretation for the coefficients of the polynomial $Z_{g,n^+,n^-}$. For each $\alphab=(\alpha_1,...,\alpha_{n^+})$, let $\Ro_{g,n^+,n^-}(\alphab)$ be the number of directed ribbon graphs $\Ro$ counted with a weight equal to $\frac{1}{\#\Aut(\Ro)}$ and such that:
\begin{itemize}
\item The graph is quadrivalent.
\item There are $n^-$ negative unlabeled boundary components.
\item There are $n^+$ positive labeled boundary components, and the perimeter of the $i-th$ boundary is $\alpha_i$.
\end{itemize}
In the definition of $\Aut(\Ro)$, the automorphisms do not permute the positive boundaries.
\begin{prop}
\label{prop_combi_G}
The polynomial $Z_{g,n^+,n^-}$ is a generating function for the numbers $\Ro_{g,n^+,n^-}(\alphab)$, it satisfies
\begin{equation*}
 Z_{g,n^+,n^-}(L)=\sum_{\alphab} \Ro_{g,n^+,n^-}(\alphab) \prod_i \frac{L^{\alpha_i}_i}{(\alpha_i-1)!}.
\end{equation*}
\end{prop}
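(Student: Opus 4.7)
The idea is to apply Lemma \ref{lem_transfert_lemma} to each summand of the decomposition $V_{g,n^+,n^-} = \sum_{\Ro \in \rib^*_{g,n^+,n^-}} V_\Ro/\#\Aut(\Ro)$, specialized to $\alphab^- = (0,\ldots,0)$. In that specialization the polynomial weight $\prod_i (L_i^-)^{\alpha_i^-}/\alpha_i^-!$ on the left-hand side of the lemma reduces to the constant $1$, so the identity computed by the lemma is exactly the per-graph integral that appears in the definition of $Z_{g,n^+,n^-}$.

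First I would check that when $\alphab^- = 0$, the counting function $\overline{P}_\Ro(\alphab^+|0)$ defined in \eqref{formula_PRo} vanishes unless $\alphab^+ = 0$: since every edge of a directed ribbon graph belongs to exactly one negative boundary, the conditions $L_i^-(x)=0$ for all $i$ force the integer metric $x \in \N^{X_1 R}$ to be identically zero, whence also $L_i^+(x)=0$. Thus only the term $\alphab^+ = 0$ survives on the right-hand side of Lemma \ref{lem_transfert_lemma}, and after simplification one obtains the clean per-graph identity
\begin{equation*}
\prod_i L_i^+ \int_{L^- \in |L^+| \cdot \Delta_{n^-}} V_\Ro(L^+|L^-) \, d\sigma_{n^-}^{|L^+|} = \prod_i \frac{(L_i^+)^{\alpha_i^+(\Ro)}}{(\alpha_i^+(\Ro)-1)!}.
\end{equation*}

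Summing this identity with weight $1/(n^-!\,\#\Aut(\Ro))$ over $\Ro \in \rib^*_{g,n^+,n^-}$ and regrouping according to the vector $\alphab^+(\Ro)$, it remains to identify, for every multi-index $\alphab$,
\begin{equation*}
\frac{1}{n^-!} \sum_{\Ro \in \rib^*_{g,n^+,n^-},\, \alphab^+(\Ro) = \alphab} \frac{1}{\#\Aut(\Ro)} = R_{g,n^+,n^-}(\alphab),
\end{equation*}
where on the left $\Aut$ fixes all boundary labels (both positive and negative), while on the right $\Aut$ fixes only the positive ones. This is a standard orbit-stabilizer argument: for a representative of a positive-only labeled class, the group $\Aut_{\text{pos}}$ acts on the $n^-!$ possible negative labelings, with stabilizer $\Aut_{\text{both}}$; summing $1/\#\Aut_{\text{both}}$ over each orbit produces $n^-!/\#\Aut_{\text{pos}}$, which cancels the $1/n^-!$ factor and reproduces the weight defining $R_{g,n^+,n^-}(\alphab)$. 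I expect this bookkeeping between the two conventions for $\Aut$ to be the only delicate point; the rest reduces to direct substitution in Lemma \ref{lem_transfert_lemma}.
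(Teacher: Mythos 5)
Your proof is correct and follows essentially the same route as the paper: specialize Lemma \ref{lem_transfert_lemma} at $\alphab^-=0$, use biparticity to see that the only surviving integer point is $x=0$ (so only $\alphab^+=\alphab^+(\Ro)$ contributes), and sum over $\rib^*_{g,n^+,n^-}$. Your explicit orbit--stabilizer bookkeeping reconciling the fully-labeled convention (with the $1/n^-!$ factor) against the definition of $\Ro_{g,n^+,n^-}(\alphab)$, where automorphisms may permute the unlabeled negative boundaries, makes precise a step the paper's proof leaves implicit.
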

\begin{proof}
The polynomial $Z_{g,n^+,n^-}(L)$ is given by:
\begin{equation*}
\sum_{\Ro\in\rib_{g,n^+,n^-}}\sum_{\alphab^+} K_{g,n^+,n^-}[\alphab^+|0,...,0]\prod_{i=1}^{n^+}\frac{L_i^{\alpha_i^+}}{\alpha_i^+}.
\end{equation*}
Let $\Ro$ be a directed ribbon graph, in Proposition \ref{prop_transfert_lemma} we obtain the following expression:
\begin{equation*}
K_{\Ro}[\alphab^+|\alphab^-]=\prod_i \alpha_i^+ \frac{\overline{P}_{\Ro}(\alphab^+-\alphab^+(\Ro)|\alphab^-)}{n^-!~\#\Aut(\Ro)}.
\end{equation*}
We consider the case where the entries of $\alphab^-$ are zeros; moreover, using the biparticity, if $\overline{P}_{\Ro}(\alphab^+|\alphab^-)$ is non-zero, we must have $\alphab^+\ge \alphab^+(\Ro)$ and
\begin{equation*}
d(\alphab^+-\alphab^+(\Ro))=d(\alphab^-)=0,
\end{equation*}
 then we found $\alphab^+=\alphab^+(\Ro)$. The value of $\overline{P}_{\Ro}(0|0)$ is $1$ because there is only one integral point, we obtain
\begin{equation*}
K_{\Ro}[\alphab^+|0,...,0]= \prod_i \alpha_i^+ \left\{
\begin{array}{ll}
\frac{1}{\#\Aut(\Ro)} & \text{if} ~~\alphab^+=\alphab^+(\Ro)\\
~~~~~~0 & \text{else.}
\end{array}\right.
\end{equation*}
By summing over $\rib_{g,n^+,n^-}$, we can derive Proposition \ref{prop_combi_G}.
\end{proof}
\paragraph{Directed ribbon graphs and dessins d'enfants:}

There is numerous bijections between combinatorial structures on surfaces; the subject is rich, the literature is wide, and the knowledge of the author is quite poor. We present here some bijections that are meaningful in the context of directed ribbon graphs. The first one relates the directed ribbon graphs to the Grothendieck dessins d'enfants. We recall that Grothendieck dessins d'enfants are coverings over the sphere ramified over three points $(x_0,x_+,x_-)$. In particular, we can relate the count of directed quadrivalent ribbon graphs to Hurwitz numbers of dessins d'enfants. Let $h_{g,n^+,n^-}(\alphab)$ the Hurwitz number of dessins d'enfants with:
\begin{itemize}
\item Simple unlabeled ramifications above $x_0$.
\item $n^-$ unlabeled ramifications above $x_-$.
\item The ramifications above $x_+$ are labeled and specified by $\alphab$.
\end{itemize}
The bijection between directed ribbon graph and dessins d'enfants provides the following formula:
\begin{equation*}
\label{formula_R_h}
R^{\circ}_{g,n^+,n^-}(\alphab)=h_{g,n^+,n^-}(\alphab).
\end{equation*}

\paragraph{Dessin d'enfant to undirected ribbon graph:}
There is a second bijection that identifies dessins d'enfants and usual ribbon graphs. If a dessin d'enfant counted by $h_{g,n^+,n^-}(\alphab)$ is given, we can obtain a second graph by looking at preimages of the segment that connects $x_0$ and $x_+$. It is not necessarily a directed ribbon graph, and it satisfies the following properties:

\begin{itemize}
\item There is $n^+$ labeled boundary components of perimeters given by $2\alpha$.
\item The graph is bipartite; there is $n^-$ white vertices and $2g-2+n^++n^-$ bivalent black vertices.
\end{itemize}

Then we can forget black vertices because they are bivalent and the graph is bipartite; then, we obtain a ribbon graph with prescribed perimeters $\alphab$ for its boundary components and no restriction on the degree of its vertices (we allow univalent and bivalent vertices). Let $R_{g,n}(\alpha)$ be the number of such ribbon graphs weighted by $\frac{1}{\#\Aut(R)}$. We can forget the number of vertices and set 
\begin{equation*}
    R_{g}(\alphab)=\sum_n R_{g,n}(\alpha).
\end{equation*}
\begin{prop}
We have the relation
\begin{equation*}
\Ro_{g,n^+,n^-}(\alphab)=R_{g,n^-}(\alphab),
\end{equation*}
and $R_{g}(\alphab)=\sum_{n} \Ro_{g,n(\alphab),n}(\alphab)$.
\end{prop}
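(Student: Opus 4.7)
The proof combines the two bijections developed immediately above the statement: the identification $\Ro_{g,n^+,n^-}(\alphab) = h_{g,n^+,n^-}(\alphab)$ coming from the correspondence between directed quadrivalent ribbon graphs and Grothendieck dessins d'enfants, and the identification of such dessins with ordinary (not-necessarily-quadrivalent) ribbon graphs obtained by taking the preimage of the segment joining two critical values. The main task is to verify that the second identification is a bijection which matches the weights $1/\#\Aut$ exactly.

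Concretely, starting from a dessin $\beta$ counted by $h_{g,n^+,n^-}(\alphab)$, the preimage of the appropriate segment yields a bipartite ribbon graph with $n^-$ unlabeled white vertices, $2g-2+n^++n^-$ bivalent black vertices (one per simple ramification above the corresponding critical value), and $n^+$ labeled faces of perimeters $2\alpha_i$. Contracting the bivalent black vertices halves each perimeter and gives a ribbon graph counted by $R_{g,n^-}(\alphab)$. For the inverse, given such a graph $R$, Euler's formula $V-E+F=2-2g$ with $V=n^-$ and $F=n(\alphab)=n^+$ forces $E = 2g-2+n^++n^-$; subdividing each edge by inserting a bivalent vertex at its midpoint recovers the bipartite picture, and the classical bipartite-ribbon-graph / Belyi-pair dictionary produces a dessin with the prescribed ramification profile. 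Since the bivalent vertices are canonically placed, both operations induce isomorphisms on automorphism groups, so $h_{g,n^+,n^-}(\alphab) = R_{g,n^-}(\alphab)$, and the first equality follows from formula $(13)$.

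For the second equality, since $n(\alphab)=n^+$ is determined by $\alphab$, summing the first equality over $n^-$ yields
\[
\sum_n \Ro_{g,n(\alphab),n}(\alphab) \;=\; \sum_n R_{g,n}(\alphab) \;=\; R_g(\alphab)
\]
by definition of $R_g(\alphab)$. The one point requiring care is the bookkeeping of labeled versus unlabeled structures across the composite bijection: the labeled positive boundaries of $\Ro$ must correspond to the labeled faces of $R$ with the same perimeters $\alphab$, while the unlabeled negative boundaries of $\Ro$ must correspond to the unlabeled vertices of $R$. This compatibility is built into the correspondence (the labelled data sits on the same critical value $x_+$ throughout), but it is the only nontrivial piece of bookkeeping in the argument.
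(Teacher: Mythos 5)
Your argument is correct and takes essentially the same route as the paper: it composes the directed-ribbon-graph/dessin correspondence with the subdivision--smoothing bijection described immediately before the statement. The paper in fact offers no formal proof beyond that informal description and the accompanying figure, so your version --- with the Euler-characteristic count pinning down the number of edges for the inverse map and the explicit matching of automorphism groups --- is just a more careful writeup of the paper's implicit argument.
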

\begin{rem}[Description]
In this picture, the positive boundaries are the boundaries of the new graph; the quadrivalent vertices are mapped to bivalent vertices and then are deleted; and finally, the negative boundary components become the vertices of the new graph. This process is described in figure \ref{figure_bijection}.
\end{rem}
\begin{figure}
\centering
\includegraphics[height=3cm]{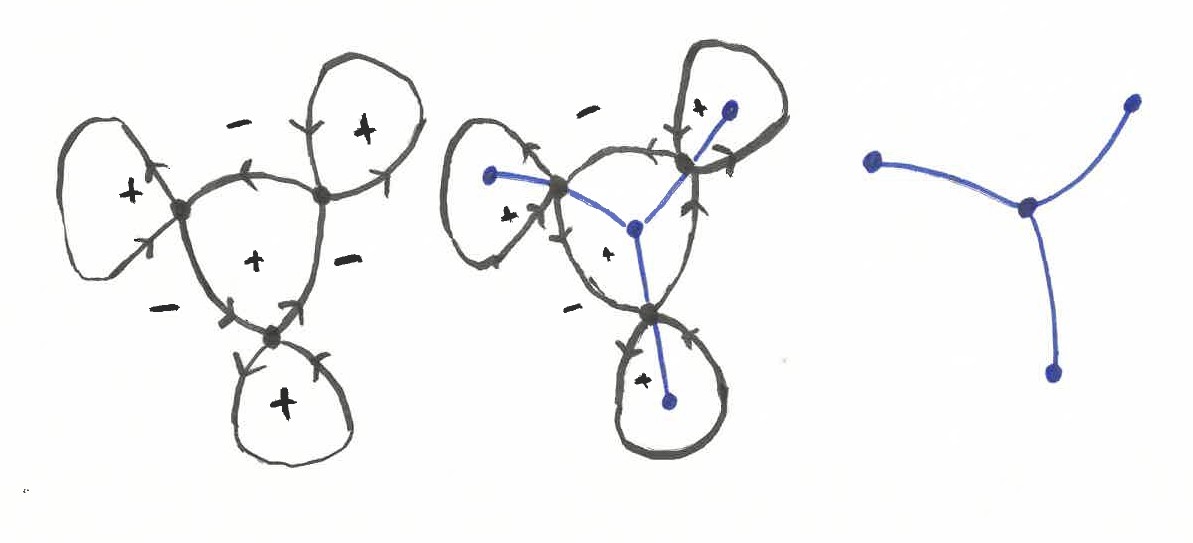}
\caption{Description of the bijection.}
\label{figure_bijection}
\end{figure}
\subsection{Cut-and-Join equation}
\paragraph{The generic case:}
In this part, we use Theorem \ref{thm_cut_and_join} to derive recursion for $Z$. Everything is straightforward.
\begin{prop}
As an element of $\Q[[\tb]]$, the partition function $Z$ satisfies the following linear equation:
\begin{eqnarray*}
\frac{\partial Z}{\partial q}(q) = \frac{1}{2}\sum_{k,l}(k+l+2)t_{k+l+2}\partial_i\partial_j Z(q) +\frac{1}{2}\sum_{k,l}(k+1)(l+1)t_{k+1}t_{l+1}\partial_{k+l} Z(q)
\end{eqnarray*}
\end{prop}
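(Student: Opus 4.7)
The plan is to deduce this Proposition directly from the Cut-and-Join equation for the operator $K$ (Theorem \ref{thm_cut_and_join}) combined with the explicit expressions for $P_{+}$ and $P_{-}$ as formal differential operators in the creation/annihilation variables.

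First I would recall that $Z = Z(q,\tb) = K(q)\cdot \ez$ by the very definition of the partition function, and that $\ez$ does not depend on $q$. Differentiating with respect to $q$ and using Theorem \ref{thm_cut_and_join} gives
\begin{equation*}
\frac{\partial Z}{\partial q} = \frac{\partial K}{\partial q}\cdot \ez = (W_1\, K)\cdot \ez = W_1\cdot (K\cdot \ez) = W_1\, Z,
\end{equation*}
where $W_1 = P_+ + P_-$. This is the entire conceptual content of the Proposition; everything else is an unpacking of the operators $P_\pm$ into differential operators on $\Q[[\tb]]$.

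Next I would invoke the Lemma established just above the Proposition, which identifies
\begin{equation*}
P_{+} = \frac{1}{2}\sum_{k,l}(k+1)(l+1)t_{k+1}t_{l+1}\partial_{k+l},\qquad P_{-} = \frac{1}{2}\sum_{k,l}(k+l+2)t_{k+l+2}\partial_k \partial_l,
\end{equation*}
under the isomorphism $\hat{S}(V)\simeq \Q[[\tb]]$. Substituting these expressions into $\partial_q Z = (P_+ + P_-)Z$ gives exactly the announced equation.

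The only minor care needed is that the identifications used are well-defined on the space $\hat{S}_\infty(V)\subset \hat{S}(V)$ to which $Z$ belongs: since $Z = Z^{\s}\sqcup \ez$ and $P_\pm$ are obtained from $K_{0,2,1}$ and $K_{0,1,2}$ via $\sqcup \,\id$, Proposition \ref{prop_unstable_diff_operators} guarantees that they act as formal differential operators and that the equation $\partial_q Z = W_1 Z$ in $\End(\hat{S}(V))$ translates into the stated PDE in $\Q[[\tb]]$. There is no serious obstacle here — the Proposition is essentially a corollary of Theorem \ref{thm_cut_and_join} after translating operators into their $(t_i,\partial_i)$ realisations.
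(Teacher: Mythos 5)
Your proposal is correct and follows essentially the same route as the paper: differentiate $Z = K(q)\cdot \ez$ in $q$, apply Theorem \ref{thm_cut_and_join} to get $\partial_q Z = W_1 Z$, and then substitute the explicit differential-operator expressions for $P_\pm$ from the preceding Lemma. The paper's own proof is just the one-line version of this argument, so there is nothing to add.
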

\begin{proof}
This is straightforward using the Cut-and-Join equation for $K$
\begin{equation*}
\frac{\partial K}{\partial q}= W_1K(q).
\end{equation*}
Applying this to the vacuum, we obtain: $\frac{\partial Z}{\partial q}= W_1Z.$
\end{proof}
\begin{rem}
As before, the variable $q$ is not necessary in this equation, as we have
\begin{equation*}
\frac{\partial Z}{\partial q}(1)=\sum_i it_i\partial_i Z(1).
\end{equation*}
\end{rem}
\begin{rem}
   In a similar way, the series $Z^{\cf}$ associated with connected surfaces also satisfies a recursion; the following fact is classical. The use of exponential makes this new equation non-linear:
 \begin{equation*}
\frac{\partial Z^{\cf}}{\partial q}= \frac{1}{2}\sum_{i,j}(i+1)(j+1)t_{i+1}t_{j+1}\partial_{i+j} Z^{\cf}+\frac{1}{2}\sum_{i,j}(i+j+2)t_{i+j+2}\partial_i\partial_jZ^{\cf} +\frac{1}{2}\sum_{i,j}(i+j+2)t_{i+j+2}\partial_i Z^{\cf}\partial_j Z^{\cf}.
\end{equation*}

\end{rem}

\paragraph{Surfaces with one negative boundary component:}
The kernels $K_{g,n^+,1}(L^+|L^-)$ are polynomials in $L^+$, and we can derive from Theorem \ref{thm_rec_quad_K} a recursion for them (see \cite{barazer2021cuttingorientableribbongraphs}). In this part, we recover these formulas by using the formalism of Fock space. We introduce a formal variable $t_-$, and the modified vacuum
\begin{equation*}
\ez(t_-)=\exp(t_-t_0)=t_-^N\cdot \ez.
\end{equation*}
Then we consider
\begin{equation*}
Z(q,t_-)=K(q)\cdot \ez(t_-).
\end{equation*}
And $Z(q, \tb,t_-)$ admits a development in the variable $t_-$,
\begin{equation*}
Z(q,\tb,t_-)=\sum_{n^-}Z^{n^-}(q,\tb)\frac{t_-^{n^-}}{n^-!}.
\end{equation*}
We can obtain the following result.
\begin{cor}
The generating series $Z^1$ also satisfies the Cut-and-Join equation
\begin{equation*}
\frac{\partial Z^1}{\partial q}=W_1Z^1.
\end{equation*}
But with the initial condition $ Z^1(0,t)=t_0.$
\end{cor}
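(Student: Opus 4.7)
The proof will be a direct specialization of Theorem \ref{thm_cut_and_join}, so the real content is essentially bookkeeping with the auxiliary variable $t_-$ and extraction of a Taylor coefficient.

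The plan is as follows. Since $t_-$ is only a formal scalar that commutes with every operator built from the $t_i$'s and $\partial_i$'s, the operator $K(q)$ acts on $\ez(t_-) = \exp(t_- t_0)$ exactly as it acts on $\ez$ with $t_0$ rescaled; in particular the Cut-and-Join equation $\frac{\partial K}{\partial q} = W_1 K$ of Theorem \ref{thm_cut_and_join} gives, after applying both sides to $\ez(t_-)$,
\begin{equation*}
  \frac{\partial Z}{\partial q}(q,\tb,t_-) \;=\; W_1\, Z(q,\tb,t_-),
\end{equation*}
since $W_1$ involves only the variables $t_i$ and $\partial_i$ and therefore commutes with the extraction of powers of $t_-$.

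Next I would expand $Z(q,\tb,t_-) = \sum_{n^-} Z^{n^-}(q,\tb)\, t_-^{n^-}/n^-!$ and identify the coefficient of $t_-/1!$ on both sides. Since $W_1$ acts coefficient-wise in the $t_-$-expansion, this yields
\begin{equation*}
  \frac{\partial Z^1}{\partial q}(q,\tb) \;=\; W_1\, Z^1(q,\tb),
\end{equation*}
which is the announced evolution equation.

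For the initial condition, using $K(0) = \id$ from Theorem \ref{thm_cut_and_join}, I would compute
\begin{equation*}
  Z(0,\tb,t_-) \;=\; \ez(t_-) \;=\; \exp(t_- t_0) \;=\; \sum_{n^-\ge 0} \frac{t_0^{n^-}\, t_-^{n^-}}{n^-!},
\end{equation*}
so that $Z^{n^-}(0,\tb) = t_0^{n^-}$ and in particular $Z^1(0,\tb) = t_0$, as required.

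There is really no obstacle here: the only subtlety is verifying that $W_1$ and $\partial/\partial q$ both commute with the extraction of the $t_-$-coefficient, which is immediate since $W_1$ is $t_-$-independent and $q$ is independent of $t_-$. The conceptual point is that introducing the source $t_-$ turns the vacuum into a one-parameter family of initial data for the same linear evolution, so every Taylor coefficient $Z^{n^-}$ in $t_-$ satisfies the same Cut-and-Join equation; the corollary is just the case $n^- = 1$.
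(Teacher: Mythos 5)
Your argument is correct and coincides with the paper's own proof: apply the operator equation $\partial_q K = W_1 K$ of Theorem \ref{thm_cut_and_join} to the modified vacuum $\ez(t_-)=\exp(t_-t_0)$, note that $W_1$ is independent of $t_-$ so the equation passes to each coefficient of the $t_-$-expansion, and read off the initial condition $Z^1(0,\tb)=t_0$ from $Z(0,\tb,t_-)=\exp(t_0t_-)$. The only difference is that you make explicit the (immediate) commutation of $W_1$ and $\partial_q$ with extraction of the $t_-$-coefficient, which the paper leaves implicit.
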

The result is slightly different from the one given in \cite{barazer2021cuttingorientableribbongraphs} but are equivalent.
\begin{proof}
We can see that $Z(q,\tb,t_-)$ also satisfies the Cut-and-Join equation than $Z(q,\tb)$ because $Z(q,\tb,t_-)=K\cdot \ez(t_-)$, the initial condition is $\ez(t_-)$. We can collect the first order of the formula $\frac{\partial Z}{\partial q}=W_1Z$ we obtain $\frac{\partial Z^1}{\partial q}=W_1Z^1.$ We conclude by using $Z(0,\tb,t_-)=\exp(t_0t_-)$ and then $Z^1(0,\tb)=t_0$.
\end{proof}

\paragraph{Removing $t_0$ from the equation:}
Sometimes a Cut-and-Join equation can be presented without $t_0$. In our case we have 
\begin{equation*}
\frac{\partial Z}{\partial t_0}=Z~~~\text{and}~~~Z(\tb)=\exp(t_0)Z(0,\tb^*)=\exp(t_0)Z^*(t^*).
\end{equation*}
Then it is natural to consider the operators $P_0=\exp(-t_0)P\exp(t_0)$ acting on $\Q[[\tb^*]]$, we have the expression
\begin{equation*}
W_1'=\frac{1}{2}\sum_{i+j>0}(i+1)(j+1)t_{i+1}t_{j+1}\partial_{i+j}+\frac{1}{2}\sum_{i,j>0}(i+j+2)t_{i+j+2}\partial_{i}\partial_{j} + \sum_{i} (i+2)t_{i+2}\partial_{i}+\frac{t_1^2}{2}+t_2.
\end{equation*}
\begin{cor}
$Z^\s(q,\tb^*)$ is the solution of the linear homogeneous equation
\begin{equation*}
\frac{\partial Z^\s}{\partial q}=W_1'Z^\s ,
\end{equation*}
with the initial condition $Z^\s(0,\tb^*)=1$.
\end{cor}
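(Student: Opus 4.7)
The plan is to obtain the equation for $Z^{\s}$ by conjugating the Cut-and-Join equation for $Z$ by $\exp(t_0)$ and then restricting to $\Q[[\tb^*]]$. Start from the identity $Z(\tb) = \exp(t_0)\, Z^{\s}(\tb^*)$ noted just above the corollary, together with the Cut-and-Join equation $\partial_q Z = W_1 Z$. Since $t_0$ and $q$ are independent variables, $\partial_q$ commutes with $\exp(t_0)$, so applying $\exp(-t_0)$ to both sides of the Cut-and-Join equation yields
\begin{equation*}
\frac{\partial Z^{\s}}{\partial q} = \bigl(\exp(-t_0)\, W_1\, \exp(t_0)\bigr) Z^{\s}.
\end{equation*}
It then suffices to identify the conjugated operator on the subspace $\Q[[\tb^*]] \subset \Q[[\tb]]$, which is characterized as the kernel of $\partial_0$.

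To carry out the conjugation, first note that each $t_k$ commutes with $\exp(t_0)$ so it is left untouched; the Heisenberg-type relation $[\partial_k, t_0] = \delta_{k,0}$ gives $\exp(-t_0)\partial_k \exp(t_0) = \partial_k + \delta_{k,0}$. Substitute this into the explicit formula
\begin{equation*}
 W_1 = \tfrac{1}{2}\sum_{i,j}(i+1)(j+1)t_{i+1}t_{j+1}\partial_{i+j} + \tfrac{1}{2}\sum_{i,j}(i+j+2)t_{i+j+2}\partial_i \partial_j,
\end{equation*}
expand $(\partial_i + \delta_{i,0})(\partial_j + \delta_{j,0})$ in the second sum, and collect the resulting terms into four groups: the original $\partial_i\partial_j$ terms, single derivatives $\partial_i$ coming from the mixed terms $\delta_{i,0}\partial_j + \delta_{j,0}\partial_i$, pure multiplication operators coming from $\delta_{i,0}\delta_{j,0}$, and similarly for the first sum where only the $\delta_{i+j,0}$ correction (i.e.\ $i=j=0$) contributes and produces the monomial $t_1^2/2$.

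The next step is to use the defining property $\partial_0 Z^{\s} = 0$: every term in the conjugated operator that still contains a factor $\partial_0$ acts as zero on $\Q[[\tb^*]]$ and can therefore be dropped. This removes the $i=j=0$ part of the first sum and the $i=0$ or $j=0$ contributions of the pure second-derivative sum, leaving exactly the restricted ranges $i+j>0$ and $i,j>0$ respectively. The surviving scalar corrections are $t_1^2/2$ (from the first sum at $i=j=0$) and $t_2$ (from $\delta_{i,0}\delta_{j,0}$ in the second sum at $i=j=0$), while the mixed terms telescope to $\sum_i (i+2)t_{i+2}\partial_i$, matching precisely the definition of $W_1'$. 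Nothing here is conceptually delicate: the main obstacle is purely bookkeeping, namely keeping track of which multi-indices survive after both the conjugation and the restriction $\partial_0 \mapsto 0$.

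Finally, the initial condition is immediate: $K(0) = \id$ by Theorem \ref{thm_cut_and_join}, so $Z(0,\tb) = \id \cdot \ez = \exp(t_0)$, and dividing by $\exp(t_0)$ gives $Z^{\s}(0,\tb^*) = 1$. Combined with the differential equation derived above, this completes the proof.
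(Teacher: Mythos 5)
Your proof is correct and follows exactly the route the paper intends: conjugating the Cut-and-Join equation by $\exp(t_0)$, using $\exp(-t_0)\partial_k\exp(t_0)=\partial_k+\delta_{k,0}$, and discarding the terms that still carry a $\partial_0$ when acting on $\Q[[\tb^*]]$, which reproduces the stated $W_1'$ term by term (including the scalar corrections $t_1^2/2$ and $t_2$). The paper leaves this computation implicit in the paragraph defining $W_1'$, so your write-up simply supplies the bookkeeping, and the initial condition argument matches as well.
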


\section{Laplace transforms, Tute equation  and topological recursion}
\label{section_TR_oriented}
\subsection{Loop equation}

The function $K_{g,n^+,n^-}$ is defined by the following formula:
\begin{equation*}
 K_{g,n^+,n^-}=\sum_{\Ro\in \rib_{g,n+,n^-}} \frac{K_{\Ro}}{\#\Aut(\Ro)}.
\end{equation*}
We have the following lemma.
\begin{lem}
\label{lem_refined_recursion}
The functions $K_{g,n^+,n^-}$ satisfy the relation:
\begin{eqnarray*}
\sum_{\Ro \in \rib_{g,n+,n^-}} \frac{\alpha_1^+(\Ro)K_{\Ro}}{\#\Aut(\Ro)}&=&\sum_{i \neq 1} L_1^+L_i^+K_{g,n^+-1,n^-}(L_1^++L_i^+,L^+_{\{1,i\}^c}|L^-)\\
&+&2\sum_{i} L_1^+ K_{g,n^+,n^--1}([L_1^+-L_i^-]_+,L^+_{\{1\}^c}|L^-_{\{i\}^c})\\
&+&L_1^+ \int_{0}^{L^+_1}K_{g-1,n^++1,n^-}(x,L^+_1-x,L^+_{\{1,i\}^c}|L^-)dx\\
&+&{\sum_{g_1,n^{\pm}_i,I_i^{\pm}}}^{'} L_1^+ K_{g_1,n^+_1+1,n^-_1}(x_1,L^+_{I_1^+}|L^-_{I_1^+})K_{g_2,n^+_2+1,n^-_2}(x_2,L^+_{I_2^+}|L^-_{I_2^+}).
\end{eqnarray*}
The notation $x_i$ was defined in Theorem \ref{thm_rec_quad_K}. And the upper script $'$ means that we exclude the unstable term $(g,n^+,n^-)=(0,1,1)$ in the sum.
\end{lem}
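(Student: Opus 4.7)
The plan is to extract the refined recursion from the same acyclic-decomposition machinery underlying Theorem \ref{thm_rec_quad_K}. For a quadrivalent directed ribbon graph $\Ro$ of type $(g,n^+,n^-)$ the Euler formula gives $\#X_0\Ro = 2g-2+n^++n^-$, so the proof of Theorem \ref{thm_rec_quad_K} can be read as summing over pairs $(\Ro, v)$ with $v\in X_0\Ro$ and classifying, for each $v$, the ``pants at $v$'' obtained by extracting $v$ as the last pair of pants in the acyclic decomposition of $\Ro$. Here I would instead sum over pairs $(\Ro, c)$ where $c$ is a positive corner of $\Ro$ lying on $\beta_1^+$, and classify each such pair by the topological type of the pants at the vertex $v(c)$ supporting $c$.

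The first combinatorial step is to observe that $\alpha_1^+(\Ro)$ equals the number of positive corners of $\Ro$ on $\beta_1^+$. Indeed, the boundary walk of $\beta_1^+$ alternates edges and corners, and bipartiteness of the dual forces each edge on $\beta_1^+$ to carry $\beta_1^+$ exactly once on its positive side; edges and positive corners on $\beta_1^+$ are thus in bijection. At each vertex $v$ the four corners alternate in sign by the same bipartiteness, so exactly two corners at $v$ are positive; depending on the type of pants at $v$, these two positive corners either both lie on the unique positive boundary of a $(0,1,2)$ pants, or on the two distinct positive boundaries of a $(0,2,1)$ pants.

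Summing the pants classification then yields the four terms of the RHS. A $(0,2,1)$ vertex contributes exactly one positive corner on $\beta_1^+$ when $\beta_1^+$ is one of its two positive boundaries, which recovers the first line with the sum restricted to $i\neq 1$ and no $\tfrac{1}{2}$ symmetry factor. A $(0,1,2)$ vertex instead contributes two positive corners on $\beta_1^+$ whenever $\beta_1^+$ is its unique positive boundary; this produces the explicit coefficient $2$ in the second line and, for the self-gluing (third line) and disconnecting (fourth line) configurations, exactly cancels the $\tfrac{1}{2}$ symmetry factors present in Theorem \ref{thm_rec_quad_K} that came from interchanging the two negative boundaries of the pants.

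The main obstacle is the careful audit of these symmetry factors, especially the cancellations in the third and fourth lines, where the factor $2$ coming from the choice of corner must exactly balance the $\tfrac{1}{2}$ of the original recursion so that the refined coefficient becomes $1$. The exclusion of the unstable $(0,1,1)$ component flagged by the prime on the disconnected sum follows from the standard conventions for stable subsurfaces adopted throughout the paper and does not affect the structure of the argument.
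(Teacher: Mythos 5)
Your proposal is correct and is essentially the paper's own argument: marking a positive corner of $\beta_1^+$ is the same as the paper's marking of a quadrivalent vertex on $\beta_1^+$ counted with multiplicity, and both proofs then invoke the vertex-surgery/acyclic-decomposition result of \cite{barazer2021cuttingorientableribbongraphs} to classify the marked configuration by the type of pants extracted at that vertex. Your audit of the coefficients (one positive corner on $\beta_1^+$ for a $(0,2,1)$ pants, two for a $(0,1,2)$ pants, the latter producing the explicit $2$ in the second line and cancelling the $\tfrac{1}{2}$ symmetry factors in the third and fourth lines) matches the paper's bookkeeping exactly.
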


\begin{rem} We remark that this formula implies Theorem \ref{thm_rec_quad_K}. Using symmetry, Lemma \ref{lem_refined_recursion} is true for all boundary component $\beta_i^+$, then taking the sum over $i$ we can obtain the Theorem. We just need the formula:
\begin{equation*}
d(\alpha^+(\Ro))=\sum_i \alpha^+_i(\Ro)= 2d_{g,n^+,n^-}.
\end{equation*}
However, the last formula does not provides a recursion for piece-wise polynomials $K_{g,n^+,n^-}$. We don not know any simple way to express the LHS of the formula in terms of functions $K_{g,n^+,n^-}$. However, this formula allows us to compute recursively coefficients of $Z$.
\end{rem}
\begin{proof}
Lemma \ref{lem_refined_recursion} follows from results of \cite{barazer2021cuttingorientableribbongraphs}. If we fix $(g,n^+,n^-)$, it is possible to consider moduli space of generic directed metric ribbon graphs of type $(g,n^+,n^-)$ with a marked quadrivalent vertex on the first boundary $\beta_1^+$. If a vertex appears twice in the boundary $\beta_1^+$, we count it twice. This space admits a measure by taking the pullback of the Lebesgue measure by the covering map; moreover, the volume of this space is the LHS of the equation in Lemma \ref{lem_refined_recursion}. This space is mapped to the set of directed ribbon graphs with a marked vertex, but the map is neither surjective nor injective. Nevertheless, by applying the vertex surgery Theorem (see \cite{barazer2021cuttingorientableribbongraphs}), this vertex is contained in a unique admissible curve, which spares it from the rest of the graph, and $\beta_1^+$ is contained in the component that is removed. In the case where the gluings are of type $I$, there are only two possible vertices in the boundary of the corresponding pair of pants, and then the contribution is:
\begin{equation*}
2 \sum_jK_{g,n^+,n^--1}([L_1^+-L_i^-]_+,L^+_{\{1,i\}^c}|L^-).
\end{equation*}
In the case of type $II$, there are two vertices in the boundary, then the choice of the vertex removes the symmetry that exchanges the two other boundaries of the pants, and the contribution is:
\begin{equation*}
\int_{0}^{L^+_1}K_{g-1,n^++1,n^-}(x,L^+_1-x,L^+_{\{1,i\}^c}|L^-)dx.
\end{equation*}
We lose the factor $\frac{1}{2}$. In a similar way, we can obtain two other terms.
\end{proof}

\paragraph{The integral recursion for $Z$:}
We consider the formal series in $\hat{S}_{n}(V)$:
\begin{equation*}
Z_{g,n}(L)=\sum_{n^-}Z_{g,n,n-}(L).
\end{equation*}
Additionally, we also denote $Z^d_{n}$ the series associated with non-connected surfaces with an opposite Euler characteristic equal to $d$, and with $n$ positive boundary components. Using Lemma \ref{lem_refined_recursion}, we can derive the following proposition:
\begin{prop}
\label{prop_rec_G_3}
Functions $(Z_{n}^d)_{n,d}$ satisfy:
\begin{equation*}
\partial_1 Z_{n}^{d}(L)= \sum_{j\neq 1} L_jZ_{n-1}^{d-1}(L_1+L_j,L_{\{1,j\}^c})+ \int_0^{L_1}Z_{n+1}^{d-1}(x,L_1-x,L_{\{1\}^c})dx.
\end{equation*}
\end{prop}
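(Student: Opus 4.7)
The plan is to derive Proposition \ref{prop_rec_G_3} by integrating Lemma \ref{lem_refined_recursion} against the negative boundary lengths, dividing by the common factor $L_1^+$, and summing over topological type, then passing from the connected to the non-connected generating series via the disjoint-union formalism.

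For the LHS, I first apply Proposition \ref{prop_transfert_lemma} at $\alphab^-=0$ to a single ribbon graph $\Ro$ to obtain $Z_\Ro(L) = \frac{1}{n^-!}\prod_i L_i^{\alpha_i^+(\Ro)}/(\alpha_i^+(\Ro)-1)!$, hence $\partial_1 Z_\Ro = \alpha_1^+(\Ro)\,Z_\Ro/L_1$. Thus the integral $\frac{1}{n^-!}\int \cdot\,d\sigma^{|L|}_{n^-}$ applied to the LHS of Lemma \ref{lem_refined_recursion} produces $L_1\partial_1 Z_{g,n^+,n^-}(L)$. For the RHS I integrate term by term: Term I becomes $L_1\sum_{j\neq 1}L_j Z_{g,n^+-1,n^-}(L_1+L_j,L_{\{1,j\}^c})$ since the simplex constraint $|L^-|=|L|$ matches the merged-boundary perimeter; Term III becomes $L_1\int_0^{L_1} Z_{g-1,n^++1,n^-}(x,L_1-x,L_{\{1\}^c})\,dx$ by Fubini; and Terms II and IV combine into a (connected plus disconnected) splitting contribution, Term II corresponding to the unstable case where one of the two new boundaries is capped by the cylinder $K_{0,1,1}$ (the factor $2$ accounting for the ordering of the two new boundaries).

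Next I sum over $(g,n^-)$ with $2g-2+n^++n^-=d$, which gives the corresponding recursion for the connected generating series. To pass to $Z_n^d=\sum_\Mo Z_\Mo$, I decompose each $\Mo=c_0\sqcup\Mo'$ with $\beta_1^+\in c_0$, apply the connected identity to $Z_{c_0}$, take the symmetric product with $Z_{\Mo'}$, and resum. The integrated Terms II, III, and IV together, summed over $\Mo'$ and over all numbers of cylinders, reassemble via the disjoint-union formula \eqref{formula_disjoint_union_vector} into the full $\int_0^{L_1} Z_{n^++1}^{d-1}(x,L_1-x,L_{\{1\}^c})\,dx$ because $Z_{n^++1}^{d-1}$ sums over all non-connected surfaces. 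Similarly, Term I combined with the bridging contributions arising when $\beta_j$ lies in a component of $\Mo'$ (obtained from the symmetric product structure) reassembles into $L_1\sum_{j\neq 1}L_j Z_{n-1}^{d-1}(L_1+L_j,L_{\{1,j\}^c})$. Dividing the resulting identity by $L_1$ yields the statement.

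The main obstacle is the bookkeeping in the last reassembly step, particularly the emergence of between-component gluings in the first RHS term which are not directly present in Lemma \ref{lem_refined_recursion}: one must verify that these arise precisely from the symmetric-product interaction between $Z_{c_0}$ and $Z_{\Mo'}$, and that the Terms II/III/IV assemble with the correct combinatorial multiplicities into a single splitting integral. A conceptually cleaner route works in the algebra $\hat{S}_\infty(\A)$ of acyclic stable graphs: using the morphism $\Kb$ of Proposition \ref{prop_compo_Kb} and the identity $\exp_\sqcup(K_{0,1,1})=\id$, all cylinder and disconnected contributions are automatically absorbed into the convolution product, reducing the claim to a transparent identity between pants-attaching operations acting on the vacuum.
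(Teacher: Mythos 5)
Your strategy coincides with the paper's (which leaves the derivation of Proposition~\ref{prop_rec_G_3} from Lemma~\ref{lem_refined_recursion} implicit): pair the refined recursion with the vacuum, use that the weight $\alpha_1^+(\Ro)$ becomes $L_1\partial_1$ on the monomial $\prod_i L_i^{\alpha_i^+(\Ro)}/(\alpha_i^+(\Ro)-1)!$, absorb term II into term IV as the cylinder-capped unstable case, and resum over disconnected surfaces so that the quadratic term disappears into the single integral against the non-connected $Z_{n+1}^{d-1}$. Two points in your write-up need correction, the first of which would derail the verification you announce. There are no \enquote{bridging contributions} to find in the first term: for a fixed label $j\neq 1$, attaching a $(0,2,1)$ pair of pants to the first boundary of any surface counted by $Z_{n-1}^{d-1}$ always produces a surface in which $\beta_1$ and $\beta_j$ lie on the \emph{same} component, and conversely, as the connected component $c_0\ni\beta_1,\beta_j$ and the complementary surface $\Mo'$ range over all possibilities, the sum of $Z_{c_0/(1\sim j)}\sqcup Z_{\Mo'}$ already exhausts $Z_{n-1}^{d-1}$. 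Surfaces of $Z_n^d$ in which $\beta_j$ lies on a different component from $\beta_1$ contribute nothing to the $j$-term and are not supposed to; hunting for an extra symmetric-product interaction here turns up nothing, and the identity balances without it (e.g.\ $Z_2^2=(L_1+L_2)^4/8$, $Z_1^1=L_1^2/2$, $Z_3^1=|L|^2/2$ gives $\partial_1Z_2^2=L_2Z_1^1(L_1+L_2)+\int_0^{L_1}Z_3^1(x,L_1-x,L_2)\,dx$ directly). Second, the \enquote{cleaner route} via $\hat{S}_{\infty}(\A)$ and $\exp_{\sqcup}(K_{0,1,1})=\id$ only yields the symmetrized statement $2d\,Z_n^d=\sum_iL_i\partial_iZ_n^d=2(P_++P_-)Z^{d-1}_{\,\cdot}$, i.e.\ the Cut-and-Join equation for the partition function; the whole content of Proposition~\ref{prop_rec_G_3} is the per-boundary refinement extracting the single exponent $\alpha_1$ rather than $\sum_i\alpha_i=2d$, and for that the marked-vertex count of Lemma~\ref{lem_refined_recursion} is genuinely indispensable.
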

\begin{rem}
As we have $Z_{n}^d(0,L_{\{1\}^c})=0$ if $d>0$ and $Z_{n}^0=1$, the last equation determines $Z_n^d$ with these two conditions.
\end{rem}

\paragraph{Tutte equations:}
From  Proposition \ref{prop_rec_G_3} we can obtain a Tutte recursion for the number of directed ribbon graphs \cite{tutte_1963, eynard2016counting}. We define $\Ro_{g,n}(\alpha)$ for all $(g,n)\neq (0,0)$ and all $\alpha\in \N^n$ by
\begin{equation*}
 \Ro_{g,n}(\alpha)= \sum_{n^-} \Ro_{g,n,n^-}(\alpha) = \Ro_{g,n^+,d(\alpha)-2d_{g,n}}(\alpha).
\end{equation*}
\begin{prop}
\label{prop_R(alpha)_tute}
Let $\Rot_{g,n}(\alphab)=\prod_i \alpha_i  \Ro_{g,n}(\alpha)$, we have the following relation:
\begin{eqnarray*}
\Rot_{g,n}(\alphab)&=& \sum_{i\neq 1}\alpha_i \Rot_{g,n-1}(\alpha_i+\alpha_1-2,\alpha_{\{1,i\}^c})\\
 &+& \sum_{k+l=\alpha_1-2}\Rot_{g-1,n+1}(k,l,\alphab_{\{1\}^c})+\sum_{I_1,I_2,g_1,g_2}\Rot_{g_1,n_1+1}(k,\alphab_{I_1})\Rot_{g_2,n_2+1}(l,\alphab_{I_2}).\\
\end{eqnarray*}
With the initialization $\Rot_{0,1}(0)=1$.
\end{prop}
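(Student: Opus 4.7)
The strategy is to extract coefficients from Proposition \ref{prop_rec_G_3} after lifting the integral recursion from the disconnected series $Z^d_n$ to the connected series $Z^\cf_{g,n}$. First I would derive the connected analogue:
\begin{align*}
\partial_1 Z^\cf_{g,n}(L) &= \sum_{j\neq 1} L_j\, Z^\cf_{g,n-1}(L_1+L_j, L_{\{1,j\}^c}) + \int_0^{L_1} Z^\cf_{g-1,n+1}(x, L_1-x, L_{\{1\}^c})\, dx \\
&\quad + \sum_{\substack{g_1+g_2=g \\ I_1\sqcup I_2=\{2,\dots,n\}}} \int_0^{L_1} Z^\cf_{g_1,n_1+1}(x, L_{I_1})\, Z^\cf_{g_2,n_2+1}(L_1-x, L_{I_2})\, dx.
\end{align*}
The extra product term arises because, in the disconnected integral $\int_0^{L_1} Z_{n+1}^{d-1}(x, L_1-x, \dots)\, dx$, the two new boundaries of perimeters $x$ and $L_1-x$ may lie on distinct connected components; restricting to the connected component containing $(x, L_1-x)$ then factorises that contribution into two independent connected pieces. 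Technically this passage is justified by the multiplicative relation $Z = \exp_\sqcup(Z^\cf)$ furnished by Proposition \ref{prop_G_morphism} and Lemma \ref{lem_exp(Kc)=K}.

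Next, by Proposition \ref{prop_combi_G} together with the definition $\Rot_{g,n}(\alpha) = \prod_i \alpha_i\, \Ro_{g,n}(\alpha)$, I rewrite
\begin{equation*}
Z^\cf_{g,n}(L) = \sum_\alpha \Rot_{g,n}(\alpha) \prod_i \frac{L_i^{\alpha_i}}{\alpha_i!}.
\end{equation*}
The third step is to extract from both sides of the connected recursion the coefficient of $L_1^{\alpha_1-1} \prod_{i>1} L_i^{\alpha_i}$ and multiply by $(\alpha_1-1)!\, \prod_{i>1} \alpha_i!$. The LHS produces $\Rot_{g,n}(\alpha)$. For the first RHS term, expanding $(L_1+L_j)^\beta$ by the binomial theorem and matching exponents forces $\beta = \alpha_1+\alpha_j-2$, and the binomial coefficient $\binom{\alpha_1+\alpha_j-2}{\alpha_1-1}$ combines with the factorial prefactors to yield $\alpha_j\, \Rot_{g,n-1}(\alpha_1+\alpha_j-2,\alpha_{\{1,j\}^c})$. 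For the two integral terms, the Beta identity $\int_0^{L_1} x^k(L_1-x)^l\, dx = \tfrac{k!\,l!}{(k+l+1)!}\, L_1^{k+l+1}$ imposes $k+l=\alpha_1-2$ and causes every factorial to cancel cleanly, yielding respectively $\sum_{k+l=\alpha_1-2} \Rot_{g-1,n+1}(k,l,\alpha_{>1})$ and $\sum \Rot_{g_1,n_1+1}(k,\alpha_{I_1})\Rot_{g_2,n_2+1}(l,\alpha_{I_2})$. The initialisation $\Rot_{0,1}(0)=1$ corresponds to the unstable cylinder $(0,1,1)$ with $K_{0,1,1}=\id$.

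The principal difficulty is Step~1: rigorously passing from the disconnected recursion to its connected analogue via the exponential/coproduct structure on the Fock space, while carefully handling the unstable cylinders introduced by $\exp_\sqcup$. Once this passage is in place, Steps~2 and~3 reduce to a binomial identity and a Beta-function integration, and amount to purely routine bookkeeping of factorials.
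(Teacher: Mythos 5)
Your proposal is correct and takes essentially the same route as the paper, which likewise obtains the Tutte relation by expanding $Z_{g,n}(L)=\sum_\alpha \Rot_{g,n}(\alpha)L^\alpha/\alpha!$ and extracting the coefficient of $L_1^{\alpha_1-1}\prod_{i>1}L_i^{\alpha_i}$ from the integral recursion, the binomial and Beta identities producing exactly the factors $\alpha_i$ and the constraint $k+l=\alpha_1-2$. Your Step 1 — recovering the connected recursion with its quadratic term from the disconnected Proposition \ref{prop_rec_G_3} via $Z=\exp_{\sqcup}(Z^{\cf})$ — is left implicit in the paper (the splitting term is already present at the level of connected kernels in Lemma \ref{lem_refined_recursion}), so your treatment is, if anything, more explicit on that passage.
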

\begin{proof}
By using Proposition \ref{prop_rec_G_3} that gives the recursion for $Z_{g,n}$ and also the formula:
\begin{eqnarray*}
Z_{g,n}(L)=\sum_{\alpha}\Rot_{g,n}(\alpha)\frac{L^\alpha}{\alpha!}.
\end{eqnarray*}
We can derive Proposition \ref{prop_R(alpha)_tute} by extraction of the coefficients.
\end{proof}
\begin{rem}[Combinatorial proof]
We note that Proposition \ref{prop_R(alpha)_tute} can be obtained in a combinatorial way by using bijection similarly to Tutte formulas and techniques described in \cite{eynard2016counting}.
\end{rem}

\paragraph{Laplace transform and Loop equation:}
We consider the Laplace transform $W_{g,n^+,n^-}$ of $Z_{g,n^+,n^-}$.
\begin{equation*}
W_{g,n^+,n^-}(x)=\int_{L}Z_{g,n^+,n^-}(L)e^{-\sum_i x_iL_i}dL=\sum_{\alpha} \Rot_{g,n^+,n^-}\prod_i\frac{1}{x_i^{\alpha_i+1}}.
\end{equation*}
We can also consider the formal Laplace transform of $Z_{g,n}$ denoted by $W_{g,n}$ and defined by
\begin{equation*}
W_{g,n}= \sum_{n^-}W_{g,n,n^-}\sum_\alpha \frac{\Rot_{g,n}(\alpha)}{x^{\alpha+1}}.
\end{equation*}

\begin{prop}
\label{prop_loop_eq}
The Laplace transform satisfies the recursion:
\begin{eqnarray*}
x_1 W_{g,n} &=& \sum_{i\neq 1} \frac{\partial}{\partial x_i}\left( \frac{W_{g,n-1}(x_1,x_{\{1,i\}^c})-W_{g,n-1}(x_i,x_{\{1,i\}^c})}{x_1-x_i}\right)\\
&+&W_{g-1,n+1}(x_1,x_1,x_{\{1\}^c})+ \sum_{I_1,I_2,g_1+g_2=g} W_{g_1-1,n_1+1}(x_1,x_{I_1})W_{g_2-1,n_2+1}(x_1,x_{I_2})+\delta_{g,0}\delta_{n,1}
\end{eqnarray*}
\end{prop}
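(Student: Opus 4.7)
The plan is to Laplace-transform the Tutte recursion of Proposition~\ref{prop_R(alpha)_tute}. Since that recursion is obtained by reading off the coefficient of $L_1^{\alpha_1-1}$ in the identity of Proposition~\ref{prop_rec_G_3}, it is valid only for $\alpha_1\geq 1$. I would multiply both sides by $x_1^{-\alpha_1+1}\prod_{j\neq 1}x_j^{-\alpha_j-1}$ and sum over $(\alpha_1,\alphab_{\{1\}^c})\in\N^n$ with $\alpha_1\geq 1$, converting the recursion into an identity of Laurent series in $x_1,\dots,x_n$ at infinity. The left-hand side produces
\begin{equation*}
x_1 W_{g,n}(x)-\sum_{\alphab'}\Rot_{g,n}(0,\alphab')\prod_{j\neq 1}x_j^{-\alpha_j-1};
\end{equation*}
by the combinatorial interpretation of Proposition~\ref{prop_combi_G}, the boundary value $\Rot_{g,n}(0,\alphab')$ vanishes in every stable case (a quadrivalent directed ribbon graph cannot have a boundary of length zero), the only exception being the degenerate cylinder, for which $Z_{0,1,1}\equiv 1$ forces $\Rot_{0,1}(0)=1$ and produces the $\delta_{g,0}\delta_{n,1}$ term.

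On the right-hand side, the handle term $\sum_{k+l=\alpha_1-2}\Rot_{g-1,n+1}(k,l,\alphab_{\{1\}^c})$ Laplace-transforms, after the substitution $\alpha_1=k+l+2$, to $W_{g-1,n+1}(x_1,x_1,x_{\{1\}^c})$ by identifying its first two arguments. Similarly, the splitting term factorizes as $\sum_{g_1+g_2=g,\,I_1\sqcup I_2=\{1\}^c} W_{g_1,n_1+1}(x_1,x_{I_1})\,W_{g_2,n_2+1}(x_1,x_{I_2})$ once the same substitution is made.

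For the more delicate swap term $\sum_{i\neq 1}\alpha_i\,\Rot_{g,n-1}(\alpha_1+\alpha_i-2,\alphab_{\{1,i\}^c})$ I would set $H_i(y)=W_{g,n-1}(y,x_{\{1,i\}^c})=\sum_{k\geq 0}h_i(k)\,y^{-k-1}$ and rely on the identity
\begin{equation*}
\frac{H_i(x_1)-H_i(x_i)}{x_1-x_i}=-\sum_{k\geq 0}h_i(k)\sum_{a+b=k}\frac{1}{x_1^{a+1}x_i^{b+1}}.
\end{equation*}
Differentiating in $x_i$ and reindexing by $\alpha_1=a+1$, $\alpha_i=b+1$ yields $\sum_{\alpha_1,\alpha_i\geq 1}\alpha_i\,h_i(\alpha_1+\alpha_i-2)\,x_1^{-\alpha_1}x_i^{-\alpha_i-1}$, which is exactly the Laplace transform of the Tutte swap term restricted to $\alpha_1\geq 1$ (the $\alpha_i=0$ piece is killed by the prefactor $\alpha_i$).

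The main obstacle is precisely the careful coordination of summation ranges: naively extending the Tutte sum to $\alpha_1=0$ would inject an $x_1$-independent spurious contribution into the divided-difference identification, which is avoided because the recursion only governs $\alpha_1\geq 1$. The leftover $\alpha_1=0$ piece is absorbed into $\Rot_{g,n}(0,\alphab')$, producing the $\delta$ term exactly in the disc case, where the resulting identity is the amplitude equation $xW_{0,1}=W_{0,1}^2+1$ recalled in the introduction. Apart from this bookkeeping, standard in matrix-model loop equations~\cite{eynard2016counting}, the identifications above are routine generating-series manipulations and assemble into the stated loop equation.
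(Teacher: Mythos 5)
Your proposal is correct and follows essentially the same route as the paper, which simply Laplace-transforms the coefficient recursion of Proposition~\ref{prop_rec_G_3}/Proposition~\ref{prop_R(alpha)_tute}; your treatment of the divided-difference identity, the handle/splitting substitution $\alpha_1=k+l+2$, and the $\alpha_1=0$ boundary term producing $\delta_{g,0}\delta_{n,1}$ all match the intended (one-line) argument. The only blemish is the stated multiplier $x_1^{-\alpha_1+1}\prod_{j\neq 1}x_j^{-\alpha_j-1}$, which should be $x_1^{-\alpha_1}\prod_{j\neq 1}x_j^{-\alpha_j-1}$ to yield $x_1W_{g,n}$ on the left-hand side --- an off-by-one typo that your subsequent swap-term computation already uses in its corrected form.
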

The last equation is well know in the theory of matrix integrals and is called a Loop equation \cite{eynard2015random},\cite{eynard2016counting}.
\begin{proof}
To prove the result, it suffices to compute the Laplace transform in Proposition \ref{prop_rec_G_3}.
\end{proof}
\begin{rem}[Unstable term]
It is convenient to extract two terms of the sum over $(g,n)$, the ones that contain $W_{0,1}$. We can write:
\begin{eqnarray*}
(x_1-2W_{0,1}(x_1)) W_{g,n} &=& \sum_{i\neq 1} \frac{\partial}{\partial x_i}\left( \frac{W_{g,n-1}(x_1,x_{\{1,i\}^c})-W_{g,n-1}(x_i,x_{\{1,i\}^c})}{x_1-x_i}\right)\\
&+&W_{g-1,n+1}(x_1,x_1,x_{\{1\}^c})+ \underset{I_1,I_2,g_1+g_2=g}{{\sum}^{'}} W_{g_1-1,n_1+1}(x_1,x_{I_1})W_{g_2-1,n_2+1}(x_1,x_{I_2})+\delta_{g,0}\delta_{n,1}.
 \end{eqnarray*}
\end{rem}

\subsection{Topological recursion}
\paragraph{The amplitude $W_{0,1}$ of the disc:}
The series $W_{0,1}$ is associated to counting problems for directed ribbon graphs with only one positive boundary and drawn on the sphere. The only nontrivial term of the sequence of coefficients $(\Rot_{0,1,n}(\alpha))_{\alpha}$ is $\Rot_{0,1,n}(2n-2)$ and corresponds to graphs with $n$ labeled negative faces.\\

The recursion directly gives the following closed equation for $W_{0,1}$, which is classical in the theory of tree enumeration and in the theory of topological recursion for matrix integrals (see  \cite{eynard2016counting}):
\begin{equation*}
W_{0,1}^2-xW_{0,1}+1=0.
\end{equation*}

\begin{rem}[Trees]
This equation (and the series $W_{0,2}$) is very close to the one that characterizes the generating series of Catalan numbers and is related to counting trees. 
\end{rem}

To chose a solution of \ref{} we use the fact that $W_{0,1}$ is in $\C[[x^{-1}]]$ for large $x$ we obtain for $x>2$
\begin{equation*}
W_{0,1}=\frac{1}{2}\left( x-\sqrt{x^2-4}\right).
\end{equation*}
Expending $W_{0,1}$ in the variable $\frac{1}{x}$ leads to the formula
\begin{equation*}
 W_{0,1}=\sum_{n\ge 1} \frac{2^n(2n-3)!!}{2~n!}\frac{1}{x^{2n-1}}=\sum_{n\ge 1} \frac{1}{2(2n-1)}\binom{2n}{n}\frac{1}{x^{2n-1}},
\end{equation*}
which gives the formula for the coefficients
\begin{equation*}
    \Rot_{0,1,n}(2n-2)=\frac{1}{2~(2n-1)}\binom{2n}{n}.
\end{equation*}

\paragraph{Zukowsky variables and the universal expression for the cylinder amplitude:}
We give a classical derivation of the formula for $W_{0,2}$; this involves a change of variables by using the Zukowsky map; our main references are \cite{eynard2014short} or \cite{eynard2016counting}. These techniques have been developed by B.Eynard to solve the Loop/Tute equation.\\

The germ of $W_{0,1}$ is convergent in the neighborhood of $\infty$ in $\C P_1$, but due to the square root, it does not extend to a single-valued function on $\C P_1$ . Nevertheless, we can consider the smallest cover of $\C P_1$ on which the function is indeed single valued. In our case, there is two branch points at ${\pm 2}$, and then we might use a two-cover. The covering is explicitly given by the \textit{Zukowsky} map:
\begin{equation*}
x(z)=\left( z+\frac{1}{z}\right)~~~~z \in \C P^1.
\end{equation*}
The map $x$ is ramified at the two points $1,-1$, and the Galois involution is the inversion $\sigma(z)=\frac{1}{z}$. By abuse of notation, we still denote $W_{0,1}(z)$ the pullback of the germ $W_{0,1}(x)$ at $\infty$. From the formula for $W_{0,1}$, we derive
\begin{equation*}
W_{0,1}(z)=\frac{1}{z}~~~~\forall z \in \C P^1
\end{equation*}
at the neighborhood of $\infty$ and then on $\C P_1$ by analytic continuation. Then $W_{0,1}$ is meromophic on $\C P_1$. For $W_{0,2}$, we can obtain the following formula:
\begin{equation*}
W_{0,2}(z_1,z_2)x'(z_1)x'(z_2)=\frac{1}{(z_1z_2-1)^2}=\frac{1}{(z_1-z_2)^2}-\frac{x'(z_1)x'(z_2)}{(x(z_1)-x(z_2))^2}.
\end{equation*}
In a similar way, it is possible to define the series $W_{g,n}(z)=W_{g,n}(x(z))$, the composition by $x(z)$ makes sense in a neighborhood of $\infty$.\\

\paragraph{Differentials $\omega_{g,n}$}
Walking in the steps of B.Eynard and N.Orantin, we define differentials $\omega_{g,n}$ by
\begin{equation*}
\omega_{g,n}^0=W_{g,n}(z)\bigotimes_i x'(z_i) dz_i.
\end{equation*}
If $(g,n)\neq (0,2)$ and 
\begin{equation*}
\omega_{0,2}=\omega^0_{0,2}+\frac{x'(z_1)x'(z_2)dz_1\otimes dz_2}{(x(z_1)-x(z_2))^2}=\frac{dz_1\otimes dz_2}{(z_1-z_2)^2}.
\end{equation*}
is the \textit{Bergman Kernel}, or the fundamental second-kind differential of $\C P_1$. Each of these differentials defines a germ of multi-differentials near $\infty\in \C P_1$. If $y(z)=\frac{1}{z}$ we have $\omega_{0,1}=ydx$. And the equation:
\begin{equation*}
xy=y^2+1,
\end{equation*}
is called \textit{spectral curve}.

\paragraph{Topological recursion:}
We derive the topological recursion formula for $\omega_{g,n}$. We denote $\sigma_i^*\omega_{*,*}$ the pullback of $\omega_{*,*}$ by $\sigma_i$ acting in the $i-th$ variable and $d_i$ the differential of a function with respect to the $i-th$ variable.
\begin{thm}
\label{thm_tr_generic}
The $\omega_{g,n}$ are given by the following recursion:
\begin{eqnarray*}
\omega_{g,n}= \sum_{\epsilon=\pm 1}\oint_{z=\epsilon} \frac{dz_1}{\omega_{0,1}-\sigma^*\omega_{0,1}}\left (\frac{1}{z-z_1}-\frac{1}{z^{-1}-z_1}\right)&\cdot&(\sigma_2^* \omega_{g-1,n+1}(z,z,z_{\{1\}^c})\\
&+& \sum_{g_i,n_i,I_i}\omega_{g_1,n_1+1}(z,z_{I_1})\otimes \sigma_1^* \omega_{g_2,n_2+1}(z,z_{I_2})).
 \end{eqnarray*}
 And then, the differentials $\omega_{g,n}$ are solution of the E.O. topological recurssion with spectral curve 
 \begin{equation*}
     xy=y^2+1
 \end{equation*}
\end{thm}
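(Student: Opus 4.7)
The plan is to derive the topological recursion formula from the loop equation via the standard Eynard--Orantin residue procedure. First, I would isolate in Proposition \ref{prop_loop_eq} the two terms $W_{0,1}(x_1)\,W_{g,n}(x_1,x_I)$ that appear in the bilinear sum (from the slots $(g_1,n_1{+}1)=(0,1)$ and its symmetric partner) and rearrange the loop equation as
\begin{equation*}
(x_1 - 2W_{0,1}(x_1))\,W_{g,n}(x_1,x_I) = \mathcal{E}_{g,n}(x_1,x_I),
\end{equation*}
where $\mathcal{E}_{g,n}$ collects the derivative terms, the loop contribution $W_{g-1,n+1}(x_1,x_1,x_I)$, the stable bilinear products, and the inhomogeneity $\delta_{g,0}\delta_{n,1}$. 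Pulling back via $x(z)=z+1/z$ with $W_{0,1}(z)=1/z$, the prefactor becomes $x-2W_{0,1}=z-1/z$, which vanishes exactly at the ramification points $z=\pm 1$. A direct computation gives $(z-1/z)\,dx(z)=(z^2-1)^2/z^3\,dz = -(\omega_{0,1}-\sigma^{*}\omega_{0,1})(z)$, so after multiplying by $\prod_i x'(z_i)dz_i$ the loop equation becomes an identity between meromorphic multidifferentials in which the LHS acquires the factor $\omega_{0,1}-\sigma^{*}\omega_{0,1}$.

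Next, I would recover $\omega_{g,n}$ by Cauchy's formula localized at the ramification points. Since $W_{g,n}$ is a function of $x$, it is $\sigma$-invariant upstairs; the primitive in $z$ of the Bergman kernel $\omega_{0,2}(z,z_1)$ from $\sigma(z_0)=1/z_0$ to $z_0$ equals $(\tfrac{1}{1/z_0-z_1}-\tfrac{1}{z_0-z_1})\,dz_1$, which is precisely the kernel appearing in the theorem once divided by $\omega_{0,1}-\sigma^{*}\omega_{0,1}$. Because the combination $\omega_{0,1}-\sigma^{*}\omega_{0,1}$ has a double zero at each ramification point and the stable RHS $\mathcal{E}_{g,n}$ is rational in $z$ with poles only at $z=\pm 1$ (this is the ``linear loop equation'' content coming from $\sigma$-invariance plus regularity away from ramification), substitution of $\mathcal{E}_{g,n}$ into $\sum_{\epsilon=\pm 1}\oint_{z=\epsilon}$ of kernel $\times$ RHS reproduces $\omega_{g,n}(z_1,z_I)$.

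The identification of the individual pieces on the RHS proceeds term by term: the splitting term $W_{g-1,n+1}(x_1,x_1,x_I)\,dx(z_1)^2$ becomes $\omega_{g-1,n+1}(z,\sigma(z),z_I)=\sigma_2^{*}\omega_{g-1,n+1}(z,z,z_I)$ once one accounts for the Bergman-kernel modification in the definition of $\omega_{0,2}$, which exactly cancels the spurious diagonal pole arising from evaluating two branches at the same base point; the stable bilinear sum directly produces $\omega_{g_1,n_1+1}(z,z_{I_1})\otimes\sigma_1^{*}\omega_{g_2,n_2+1}(z,z_{I_2})$ after one branch on each factor is flipped by $\sigma$ to match the antisymmetric kernel; and the derivative terms in the loop equation, together with the $\omega_{0,2}$ correction, are absorbed into the Bergman kernel contributions in the usual way (see \cite{eynard2016counting}).

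The main obstacle is bookkeeping of the unstable data: (i) verifying that the Bergman correction $\tfrac{x'(z_1)x'(z_2)}{(x(z_1)-x(z_2))^2}dz_1\,dz_2$ added to $\omega_{0,2}^0$ precisely compensates the diagonal singularity produced by $W_{g-1,n+1}(x_1,x_1,\dots)$ when the two arguments are lifted to $(z,\sigma z)$; (ii) checking that only the ramification points $\pm 1$ contribute residues (no contribution from $z=0$ or $z=\infty$), which requires careful analysis of the pole structure of each piece of $\mathcal{E}_{g,n}$ on $\mathbb{CP}^1$; and (iii) verifying that the inhomogeneous $\delta_{g,0}\delta_{n,1}$ term matches the initial data $\omega_{0,1}=y\,dx$ of the recursion. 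Once these three points are settled, the desired formula follows by comparison of the two expressions for $\omega_{g,n}$, and the identification of the spectral curve $xy=y^2+1$ is immediate from the quadratic equation satisfied by $W_{0,1}$.
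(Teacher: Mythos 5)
Your proposal is correct and is precisely the standard Eynard--Orantin derivation (isolate the $W_{0,1}$ terms, pull back by the Zhukovsky map so the prefactor becomes $\omega_{0,1}-\sigma^{*}\omega_{0,1}$, apply the Cauchy/residue kernel at the ramification points $z=\pm1$, and check the pole structure and unstable corrections); the paper gives no independent argument here and simply defers to \cite{eynard2016counting}, which carries out exactly the steps you outline. Your identification of the three bookkeeping points (the $\omega_{0,2}$ correction cancelling the diagonal singularity, the localization of residues at $\pm1$ only, and the matching of the $\delta_{g,0}\delta_{n,1}$ term with $\omega_{0,1}=y\,dx$) is the right list of what must be verified to make the citation self-contained.
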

\begin{rem}
\begin{itemize}
    \item It is not obvious that this formulation is indeed simpler. But there is an interesting phenomenon: the contribution of type $3$ gluing fits exactly with the one of type $2$ and disappears in the final formula. This is an explanation of why we change $\omega_{0,2}^0$ to $\omega_{0,2}$.
    \item There is a degree of freedom, the spectral curve is far from being unique. Indeed, the only term that contains $y$ is:
    \begin{equation*}
    \omega_{0,1}-\sigma^*\omega_{0,1}=(y-y\circ\sigma)dx=2y^- ,
    \end{equation*}
    $y^-$ is the anti-invariant part of $y$ under the involution. The invariant part of $y$ does not change the differentials $\omega_{g,n}$ for $(g,n)\neq (0,1)$. Changing $y$ by a term of the form $f(x)$ does not affect the recursion. There is a canonical choice given by taking an anti-invariant $y$, and it corresponds to $4y^2+4=x^2$.
\end{itemize}
\end{rem}

\begin{proof}
    We refer to \cite{eynard2016counting} for a proof.
\end{proof}

\subsection{Virasoro constraints}
We can derive from the Tutte equation a familly or relation called Virasoro constraint\footnote{ Indeed, these constraints are called Virasoro constraints, but we only consider the upper part of the Lie algebra, and then we shall call them \textit{Witt} constraints.}. These constraints are satisfied by the potential $Z$ and where also given in \cite{kazarian2015virasoro}. Let $L_i$ for $i\ge -1$ be the differential operators given by:
\begin{equation*}
L_i= - \partial_{i+2} + \sum_{j}(j+1) t_{j+1}\partial_{i+j+1}+\sum_{k+l=i} \partial_k\partial_l ,~~~~\forall i\ge -1.
\end{equation*}

\begin{prop}
  \label{prop_virasoro_alg}
  The generating series $Z$ satisfies the "Virasoro" constraints,
\begin{equation*}
L_i(Z)=0,~~~~~\forall~i\ge -1.
 \end{equation*}
The operators satisfy
\begin{equation*}
[L_i,L_j]=(i-j)L_{i+j}.
\end{equation*}  
\end{prop}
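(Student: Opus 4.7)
The proof of Proposition \ref{prop_virasoro_alg} splits into two independent claims, which I treat separately.

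\textbf{Commutation relations.} The plan is a direct computation from $[\partial_a, t_b] = \delta_{a,b}$. Decompose $L_i = A_i + B_i + C_i$ with $A_i = -\partial_{i+2}$, $B_i = \sum_{j}(j+1)t_{j+1}\partial_{i+j+1}$, and $C_i = \sum_{k+l=i}\partial_k\partial_l$, and compute the nine cross-commutators. Those involving only $\partial$'s --- $[A_\bullet, A_\bullet]$, $[A_\bullet, C_\bullet]$ and $[C_\bullet, C_\bullet]$ --- vanish. The essential identity is
\[
[B_i, B_j] \;=\; (i-j)\sum_{a}(a+1)\,t_{a+1}\,\partial_{i+j+a+1} \;=\; (i-j)\,B_{i+j},
\]
obtained by expanding $[t_{a+1}\partial_{i+a+1}, t_{b+1}\partial_{j+b+1}] = \delta_{b, i+a}\,t_{a+1}\partial_{i+j+a+1} - \delta_{a, j+b}\,t_{b+1}\partial_{i+j+b+1}$ and summing with weights $(a+1)(b+1)$. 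A parallel $\delta$-bookkeeping yields $[A_i, B_j] + [B_i, A_j] = (i-j)A_{i+j}$ and $[B_i, C_j] + [C_i, B_j] = (i-j)C_{i+j}$, so summing all pieces gives $[L_i, L_j] = (i-j)L_{i+j}$. Alternatively, one recognizes $L_i = \tfrac{1}{2}\sum_{m+n=i} :J_m J_n:$ in the standard oscillator representation with modes $J_{-n} = nt_n$, $J_n = \partial_n$, in which case the Witt relations are the universal free-field computation.

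\textbf{Annihilation of $Z$.} The plan is to derive $L_i Z = 0$ from the loop equation (Proposition \ref{prop_loop_eq}) by expansion in powers of $x_1^{-1}$ around $x_1 = \infty$. Extracting the coefficient of $x_1^{-m}$ for each $m \geq 0$ yields a relation among the coefficients $\Rot_{g,n}(\alpha)$. Under the Fock-space dictionary of Section \ref{subsection_boson} --- extracting a positive boundary of length $m$ corresponds to applying $\partial_{t_m}$, and the variables $t_m$ act as creation operators --- these coefficient identities translate directly to $L_{m-2} Z = 0$. Concretely, on the LHS, $x_1 W_{g,n}$ at order $x_1^{-m}$ contributes $\partial_{i+2} Z$ (with $i = m-2$); the derivative terms $\partial_{x_j}\bigl((W(x_1,\ldots) - W(x_j,\ldots))/(x_1 - x_j)\bigr)$ produce the merging piece $\sum_{j}(j+1)t_{j+1}\partial_{i+j+1}Z$; and the quadratic block $W_{g-1,n+1}(x_1, x_1, \ldots) + \sum W_{g_1, n_1+1}(x_1, \ldots)W_{g_2, n_2+1}(x_1, \ldots)$ produces $\sum_{k+l=i}\partial_k\partial_l Z$, using the Leibniz identity $\partial_k\partial_l\exp(F) = (\partial_k\partial_l F + \partial_k F \cdot \partial_l F)\exp(F)$ to package the connected (handle-drop) and disconnected (pinching) contributions into a single second derivative acting on $Z$.

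The main obstacle is the careful matching of combinatorial factors across the two sides --- binomial coefficients from (un)labeling boundaries, the factorials in the basis identification $e_\mu \leftrightarrow \tb^\mu/\mu!$, and the $\tfrac{1}{2}$ from the $k \leftrightarrow l$ symmetry in $C_i$. The unstable contributions (the disc amplitude $W_{0,1}$ and the term $\delta_{g,0}\delta_{n,1}$ on the RHS of the loop equation) must be tracked carefully to absorb the vacuum factor $\exp(t_0)$ in $Z = \exp(t_0)\,Z^{\s}(\tb^*)$; this is what ensures that $L_{-1}$ genuinely annihilates $Z$ at all orders in $q$, and may equivalently be encoded as a dilaton-type shift in the $t$-variables. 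Once this bookkeeping is in place, the identification of the $L_i$-operators with the expansion of the loop equation is mechanical and requires no further idea.
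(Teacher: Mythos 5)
Your proposal is correct and follows essentially the same route as the paper: the Witt relations by direct commutator bookkeeping, and $L_iZ=0$ by extracting the coefficient of $x_1^{-m}$ from the loop equation, which is literally the Tutte recursion of Proposition \ref{prop_R(alpha)_tute} that the paper feeds into $Z=\sum_\mu \Rt^{\circ}(\mu)\tb^\mu/\mu!$. The only (inessential) difference is that you keep the connected loop equation and linearize at the end via $Z=\exp(Z^{\cf})$ and the Leibniz identity, whereas the paper first passes to the non-connected Tutte recursion, where the quadratic term is already absorbed; note also that your aside $L_i=\tfrac12\sum_{m+n=i}:J_mJ_n:$ is not literally exact as stated (it requires a dilaton shift to produce $-\partial_{i+2}$ and the $\sum_{k+l=i}\partial_k\partial_l$ term carries no $\tfrac12$ in the paper's normalization), though your primary $\delta$-bookkeeping computation does close correctly without it.
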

\begin{rem}
    \item The case $L_{-1}= -\partial_1 + \sum_{k}(k+1)t_{k+1}\partial_k$ is also called String equation.
    \item We can also add the constraint $C=-\partial_0+1$ and see that it commutes with all the $L_i$ and determine the series uniquely.
\end{rem}

\begin{proof}
    To prove Proposition \ref{prop_virasoro_alg}, we start with a reformulation of Proposition \ref{prop_R(alpha)_tute}. It admits a generalization for non-connected graphs, which is
\begin{eqnarray*}
{\Rt}_{n}^{\circ,d}(\alphab)&=& \sum_{i\neq 1}\alpha_i  {\Rt}_{n-1}^{\circ,d+1}(\alpha_i+\alpha_1-2,\alphab_{\{1,i\}^c})\\
&+& \sum_{k+l=\alpha_1-2} {\Rt}_{n+1}^{\circ,d-1}(k,l,\alphab_{\{1\}^c}).
\end{eqnarray*}
The non-connected coefficients $\Rt_{n}^{\circ,d}(\alpha)$ are symmetric and can be denoted $\Rt^{\circ}(\mu)$ for $\mu$ a partition (We can drop indices $d,n$ as we have $d=d(\mu)$ and $n=n(\mu)$). Let $\mu$ and $\alpha$ with $\mu(\alphab)=\mu$. For a fixed $i$, we can apply the last formula for all the boundary $j$ with $\alpha_j=i$. It leads to
    \begin{eqnarray}
\label{formula_R(mu)_tute}
i\mu(i)\frac{\Rt^{\circ}(\mu)}{\mu!} &=&\sum_j ij(\mu(i+j-2)+1)\frac{\Rt^{\circ}(\mu-\delta_i-\delta_j+\delta_{i+j-2})}{(\mu-\delta_i-\delta_j+\delta_{i+j-2})!}\\
&+&\sum_{k+l=i-2}i(\mu(k)+1)(\mu(l)+1)\frac{\Rt^{\circ}(\mu-\delta_i+\delta_k+\delta_l)}{(\mu-\delta_i+\delta_k+\delta_l)!}.
\end{eqnarray}
We have the expression:
\begin{equation*}
 Z =\sum_\mu \frac{{\Rt}_{n}^{\circ,d}(\mu)}{\mu!}\tb^\mu
 \end{equation*}
then we plug the recursion of Formula \ref{formula_R(mu)_tute} in this generating series, and this leads to the desired equation for $Z$:
 \begin{equation*}
 it_i\partial_i  Z =\sum_{j}ijt_j\partial_{i+j-2}Z+ \sum_{k+l=i-2}it_i\partial_{k}\partial_{l}Z.
\end{equation*}
We can obtain the commutation relation by a directed computation. 
\end{proof}

\section{Bivalent vertices}
\label{section_marked}
In this section, we discuss the case of ribbon graphs with bivalent vertices, which  is also of interest. Let $(g,n^+,n^-,m)$ satisfy $2g-2+n^+ +n^- +m>0$. We can define $\Mc_{g,n^+,n^-,m}$ as the combinatorial moduli space of directed metric ribbon graphs with $m$ marked vertices that are of degree $\ge 2$. Generically, the vertices at the marked points are of degree $2$. We define the volume $V_{g,n^+,n^-,m}$ using the natural Lebesgue measure. It is a continuous piecewise polynomial function of degree $4g-3+n^++n^-+m$. The statement of Proposition \ref{prop_transfert_lemma} remains valid in this context, leading to the following corollary:
\begin{cor}
    If $2g-2+n^+ +n^- +m>0$ the kernel $K_{g,n^+,n^-,m}$ defines a homogeneous operator on $\hat{S}(V)$ of degree $2d_{g,n^+,n^-}+m$.
\end{cor}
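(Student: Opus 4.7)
The plan is to adapt the proof of Proposition \ref{prop_transfert_lemma} essentially verbatim, noting that the only ingredients that genuinely depend on the combinatorial type of the ribbon graph are (i) the biparticity used to split the edge set along positive boundaries, and (ii) the edge count $\#X_1\Ro$, which enters the final degree. Bivalent vertices do not affect (i); they only shift (ii) by $m$.

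First, I would write $V_{g,n^+,n^-,m}$ as a sum over isomorphism classes of directed ribbon graphs $\Ro$ of topological type $(g,n^+,n^-)$ having $m$ marked bivalent vertices and all other vertices quadrivalent, namely
\begin{equation*}
V_{g,n^+,n^-,m}=\sum_{\Ro}\frac{V_{\Ro}}{\#\Aut(\Ro)},
\end{equation*}
where contributions from marked vertices of degree $>2$ live on positive-codimension strata and do not affect the top-dimensional volume. For each such $\Ro$, Euler's formula together with the half-edge count $4V_4+2m=2E$ gives
\begin{equation*}
\#X_1\Ro=2d_{g,n^+,n^-}+m.
\end{equation*}

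Next, I would apply Lemma \ref{lem_transfert_lemma} to each $\Ro$. The proof of that lemma used only the decomposition of $\Met_+(\Ro,L^+)$ as a product of simplices indexed by positive boundaries, which is a consequence of biparticity and therefore remains valid in the presence of bivalent marked vertices. Setting $K_{\Ro}=\prod_i L_i^+ V_{\Ro}$, the lemma yields
\begin{equation*}
\frac{1}{n^-!}\int_{L^-\in|L^+|\cdot\Delta_{n^-}}K_{\Ro}(L^+|L^-)\,e_{\alphab^-}(L^-)\,d\sigma_{n^-}^{|L^+|}=\sum_{\alphab^+}\frac{\overline{P}_{\Ro}(\alphab^+|\alphab^-)}{\#\Aut(\Ro)\,n^-!}\prod_i\frac{(L_i^+)^{\alpha_i^++\alpha_i^+(\Ro)}}{(\alpha_i^++\alpha_i^+(\Ro)-1)!}.
\end{equation*}
Each term is a monomial in $L^+$ of degree
\begin{equation*}
d(\alphab^+)+d(\alphab^+(\Ro))=d(\alphab^-)+\#X_1\Ro=d(\alphab^-)+2d_{g,n^+,n^-}+m,
\end{equation*}
where the first equality uses the biparticity relation $d(\alphab^+)=d(\alphab^-)$ enforced by $\overline{P}_{\Ro}\neq 0$.

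Summing over $\Ro$ and using that $V_{g,n^+,n^-,m}$ is symmetric in the positive (resp.\ negative) boundaries, the resulting polynomial in $L^+$ is homogeneous of degree $d(\alphab^-)+2d_{g,n^+,n^-}+m$ and symmetric in $L^+$. This is exactly the statement that $K_{g,n^+,n^-,m}$ maps $S_{n^-}^{d}(V)\to S_{n^+}^{d+2d_{g,n^+,n^-}+m}(V)$, i.e.\ defines a homogeneous operator of degree $2d_{g,n^+,n^-}+m$, and as in Corollary \ref{cor_operator_gn+n-} such homogeneous operators extend canonically to $\hat S(V)$. The main subtlety is purely bookkeeping, namely verifying the edge count and checking that lower-dimensional strata (where marked vertices have degree $>2$ or coincide with quadrivalent vertices) do not contribute to the top-dimensional volume; these are standard facts about the cell structure of $\Mc_{g,n^+,n^-,m}$.
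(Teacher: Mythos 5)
Your proposal is correct and follows exactly the route the paper intends: the paper's entire justification is the remark that Proposition \ref{prop_transfert_lemma} remains valid for graphs with bivalent marked vertices, and you have simply spelled out why (biparticity still splits the edges along positive boundaries, so Lemma \ref{lem_transfert_lemma} applies verbatim). Your edge count $\#X_1\Ro=2d_{g,n^+,n^-}+m$, which is the only new input needed to get the shifted degree, is also correct.
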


Analogously to Section \ref{section_operator_volumes}, we define operator
\begin{equation*}
    \Kbull(q_0,q_1)=\exp_{\sqcup}(\sum_{g,n^+,n^-,m} q_0^{m}q_1^{2g-2+n^++n^-}K_{g,n^+,n^+,m}))
\end{equation*}
As before, we write $\Kbull=K^{\bullet,\s}\sqcup id$, so that $\Kbull$ is a formal differential operator. We have the specialization:
\begin{equation*}
    \Kbull(0,q)=K(q).
\end{equation*}

\subsection{Recursion for $\Kbull$}

The results of \cite{barazer2021cuttingorientableribbongraphs} yield two different recursions for $V_{g,n^+,n^-,m}$; one involve removing a vertex of degree $4$ leading to the formula in Theorem \ref{thm_rec_quad_K}, and the other involves removing a vertex of degree $2$, which yields the following result:

\begin{prop}
    The piecewise polynomial kernel $K_{g,n^+,n^-,m}$ satisfies
    \begin{equation*}
        m K_{g,n^+,n^-,m}=EK_{g,n^+,n^-,m-1}.
    \end{equation*}
    Where $E$ is the function on $\Lambda_{n^+,n^-}$ defined by $E(L^+|L^-)=|L^+|=|L^-|$.
\end{prop}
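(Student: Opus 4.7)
The plan is to exploit a forgetful morphism that erases one of the marked bivalent vertices and integrate over its fiber. Let $\widetilde{\Mc}_{g,n^+,n^-,m}$ denote the natural $m!$-cover of $\Mc_{g,n^+,n^-,m}$ in which the $m$ marked vertices are labeled $v_1,\dots,v_m$, and write $\widetilde{V}_{g,n^+,n^-,m}=m!\,V_{g,n^+,n^-,m}$ for the corresponding volume. I would introduce the forgetful map
\begin{equation*}
\pi_m : \widetilde{\Mc}_{g,n^+,n^-,m} \longrightarrow \widetilde{\Mc}_{g,n^+,n^-,m-1}
\end{equation*}
that removes the last marked vertex $v_m$ and, in the generic case where $v_m$ is bivalent, smooths it out by concatenating its two adjacent edges into a single edge of summed length.

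The key geometric observation is that on the open stratum where every marked vertex is bivalent and distinct from the other marked vertices and from the vertices of degree $\geq 4$, the fiber of $\pi_m$ above $(R',v_1,\dots,v_{m-1})$ is canonically parametrized by the underlying $1$-complex of $R'$: a point in the fiber is the choice of an edge $e \in X_1 R'$ together with a position $x \in [0, \ell(e)]$ at which to reinsert $v_m$. The induced Lebesgue measure on the fiber therefore has total mass $\sum_{e \in X_1 R'} \ell(e)$. Since every edge of a directed ribbon graph bounds exactly one positive and one negative boundary component, grouping edges by the positive face they lie on yields $\sum_{e} \ell(e) = \sum_i L_i^+ = |L^+|$ and, symmetrically, $|L^-|$, consistent with the identity $|L^+|=|L^-|$ on $\Lambda_{n^+,n^-}$.

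Applying Fubini to integrate the fiber mass against the conditional measures $d\mu_{\Mo}(L)$ on the slices $\widetilde{\Mc}_{g,n^+,n^-,m-1}(L^+,L^-)$ then produces
\begin{equation*}
\widetilde{V}_{g,n^+,n^-,m}(L^+|L^-) \;=\; |L^+| \cdot \widetilde{V}_{g,n^+,n^-,m-1}(L^+|L^-).
\end{equation*}
Substituting $\widetilde{V}_k = k!\,V_k$ converts this into $m\,V_{g,n^+,n^-,m} = E\,V_{g,n^+,n^-,m-1}$, and multiplying both sides by $\prod_i L_i^+$ transfers the identity to the kernels $K_{g,n^+,n^-,m}$.

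The main technical point to verify is that the exceptional strata — where two marked vertices collide, or where a marked vertex coincides with a vertex of degree $\geq 4$ — lie in positive codimension and contribute zero to the relevant Lebesgue measures; granted this, the generic fiber picture above determines the continuous piecewise polynomial functions on both sides. A minor subtlety is that the action of $\Aut(R')$ on the fiber of $\pi_m$ matches the passage from labeled to unlabeled markings in the definition of the volumes, so that the $m!$-division is consistent with the quotient measure on $\Mc_{g,n^+,n^-,m}$.
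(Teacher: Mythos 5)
Your argument is correct and is essentially the approach the paper takes: the proposition is presented there as the ``remove a vertex of degree two'' recursion imported from the earlier paper, and your forgetful map $\pi_m$ --- whose fiber over a graph with $m-1$ marked points is the underlying metric graph, of total mass $\sum_{e}\ell(e)=|L^+|=E$ by biparticity --- is exactly that surgery, with the $m!$-labelled cover and the measure-zero degenerate strata handled correctly.
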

From this, we immediately deduce:
\begin{equation*}
    K_{g,n^+,n^-,m}=\frac{E^m}{m!}K_{g,n^+,n^-}.
\end{equation*}
The multiplication by $E$ defines an operator on $\hat{S}(V)$, that will be denoted $W_0$. For $P\in S_n(V)$ we have
\begin{equation*}
    (W_0\cdot P)(L)=\sum_i L_i P(L).
\end{equation*}
We also observe that $W_0$ can be expressed as:
\begin{equation*}
    W_0=K_{0,1,1,1}\sqcup id,
\end{equation*}
which corresponds to gluing a directed ribbon graph on a cylinder with a single bivalent vertex. By Proposition \ref{prop_unstable_diff_operators}, this is a formal differential operator explicitly given by:
\begin{equation*}
    W_0=\sum_i (i+1)t_{i+1}\partial_i..
\end{equation*}
It is easy to verify that $E$ commutes with $P$ and also with all $K_{g,n^+,n^-,m}$ see (Appendix \ref{subsection_dual}). Thus, we conclude with the following:
\begin{prop}
    The series $\Kbull(q_0,q_1)$ satisfies the differential equations:
 \begin{equation*}
\frac{\partial \Kbull}{\partial q_0}=W_0\Kbull,~~~\text{and}~~~ \frac{\partial \Kbull}{\partial q_1}=W_1\Kbull.
\end{equation*}
    With the initial condition $\Kbull(0,0)=id$.
Moreover, the operators $P$ and $E$ commute, hence:
 \begin{equation*}
\Kbull(q_0,q_1)=\Kbull(q_0,0)\Kbull(0,q_1)=\exp(q_0W_0+q_1 W_1).
\end{equation*}
\end{prop}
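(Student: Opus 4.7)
The plan is to identify $\Kbull(q_0,q_1)$ with the composition $\exp(q_0W_0)\circ K(q_1)$, and then read off the two differential equations together with the factorization $\Kbull=\exp(q_0W_0+q_1W_1)$ from Theorem \ref{thm_cut_and_join} and the commutator $[W_0,W_1]=0$.

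The starting point is two properties of $W_0$. First, the identity $K_{g,n^+,n^-,m}=\tfrac{W_0^m}{m!}K_{g,n^+,n^-}$ proved just above, combined with the equivalent factorization obtained by pulling $|L^-|$ out on the right, shows that $W_0$ commutes with every kernel $K_{g,n^+,n^-}$. Second, $W_0$ is a derivation for $\sqcup$ on $\End(\hat S(V))$: on $S_n(V)$ it is multiplication by $|L|=L_1+\dots+L_n$, which splits additively across the partition of variables appearing in Formula \ref{formule_union_op_1}, so $W_0(A\sqcup B)=(W_0A)\sqcup B+A\sqcup(W_0B)$. The Leibniz rule then upgrades this to $\exp(q_0W_0)$ being an automorphism of $(\End(\hat S(V)),\sqcup)$, i.e.\ $\exp(q_0W_0)\circ\exp_\sqcup(X)=\exp_\sqcup(\exp(q_0W_0)\circ X)$ for any $X$. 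Summing the exponent of $\Kbull$ over $m$ at fixed $(g,n^+,n^-)$ turns $\sum_m q_0^mK_{g,n^+,n^-,m}$ into $\exp(q_0W_0)K_{g,n^+,n^-}$, and combining the stable contributions with the cylinder series $\sum_{m\ge 0}q_0^mK_{0,1,1,m}=\exp(q_0W_0)\pr_1$ (whose $m=0$ term provides the factor $\id=\exp_\sqcup(\pr_1)$ in $\Kbull=K^{\bullet,\s}\sqcup\id$) yields
\begin{equation*}
\Kbull(q_0,q_1)=\exp_\sqcup\!\bigl(\exp(q_0W_0)(K^{\s,\cf}(q_1)+\pr_1)\bigr)=\exp(q_0W_0)\circ K(q_1),
\end{equation*}
the last equality using the automorphism property and Lemma \ref{lem_exp(Kc)=K}. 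From this, $\partial\Kbull/\partial q_0=W_0\Kbull$ and $\Kbull(0,0)=K(0)=\id$ are immediate.

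The remaining ingredient is $[W_0,W_1]=0$, which I verify directly. Writing $W_1=P_++P_-$, the pair-of-pants operators preserve the total perimeter $|L|$ of their input, since the gluings of type I and II redistribute edge lengths but leave $\sum_iL_i$ unchanged; consequently $W_0P_\pm f(L)=|L|P_\pm f(L)=P_\pm(|L|f(L))=P_\pm W_0 f(L)$. Then by Theorem \ref{thm_cut_and_join}, $\partial\Kbull/\partial q_1=\exp(q_0W_0)W_1K(q_1)=W_1\Kbull$, and the commuting exponentials combine to give $\Kbull=\exp(q_0W_0)\exp(q_1W_1)=\exp(q_0W_0+q_1W_1)$. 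The delicate part of the argument is the bookkeeping around the unstable $(0,1,1)$ cylinder and its marked variants $(0,1,1,m)$ for $m\ge 1$: they must be packaged together with the $\id$ factor so that $\exp_\sqcup$ reproduces $K(q_1)$ cleanly.
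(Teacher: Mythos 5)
Your proof is correct and follows essentially the same route as the paper: the factorization $K_{g,n^+,n^-,m}=\tfrac{E^m}{m!}K_{g,n^+,n^-}$, the commutation of $E$ (i.e.\ $W_0$) with the pair-of-pants operators, and Theorem \ref{thm_cut_and_join} for the $q_1$-direction. The paper leaves most of this implicit ("Thus, we conclude"), whereas you spell out the bookkeeping — the derivation/automorphism property of $W_0$ for $\sqcup$, the identification $\Kbull(q_0,q_1)=\exp(q_0W_0)\circ K(q_1)$, and the treatment of the $(0,1,1,m)$ cylinders — all of which checks out.
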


\subsection{Partition function and combinatorial interpretation}

As in Section \ref{section_partition_function}, we can consider the polynomials $Z_{g,n^+,n^+,m}=  K_{g,n^+,n^-,m} \cdot \ez.$ These can be written as 
\begin{equation*}
    Z_{g,n^+,n^-,m}(L)=\sum_{\alphab} \Ro_{g,n^+,n^-,m}(\alphab) \prod \frac{L^{\alpha_i}_i}{(\alpha(i)-1)!}.
\end{equation*}
In this formula $\Ro_{g,n^+,n^-,m}(\alphab)$ is the number of directed ribbon graphs with $m$ bivalent vertices and all other vertices are of degree $4$. There is $n^+$ positive boundaries of perimeter $\alphab^+$, and $n^-$ unlabeled negative boundaries. We can form the partition function:
\begin{equation*}
    \Zbull(q_0,q_1)=\Kbull(q_0,q_1)\cdot \ez= \exp_{\sqcup}(\sum_{g,n^+,n^-,m} q_0^m q_1^{2g-2+n^++n^-} Z_{g,n^+,n^-,m}),
\end{equation*}
which satisfies: 
\begin{equation*}
    \frac{\partial \Zbull}{\partial q_0}=W_0\Zbull~~~\text{and}~~~ \frac{\partial \Zbull}{\partial q_1}=W_1\Zbull.
\end{equation*}
We can see that 
\begin{equation*}
    \exp(qW_0)\cdot \ez = \exp(\sum q^m t_m)
\end{equation*}
where the LHS is the exponential of a differential operator applied to $\ez=\exp(t_0)$ and the RHS is the exponential of a formal series. Hence, $K(q_0,q_1)$ solves the Cut-and-Join equation with the initial condition $\exp(\sum q_0^m t_m)$. 
    

\paragraph{Integral points:}

 We provide a final combinatorial bijection involving integral points in the moduli space $\M_{g,n^+,n^-}^{comb}$. Let $P_{g,n^+,n^-}^{\circ}(\alphab^+|\alphab^-)$ the number of integral points in the subspace $\M_{g,n^+,n^-}^{comb,*}(\alphab^+|\alphab^-)$, which is the number of integral points that are supported by directed ribbon graphs with vertices of degree $4$ vertices, and:
\begin{equation*}
P_{g,n^+,n^-}^{\circ}(\alphab^+)=\frac{1}{n^-!}\sum_{\alphab^-} P_{g,n^+,n^-}^{\circ}(\alphab^+|\alphab^-).
\end{equation*}
\begin{lem}
\label{lem_P_R}
We have:
\begin{equation*}
P_{g,n^+,n^-}^{\circ}(\alphab)=\Ro_{g,n^+,n^-,r}(\alphab)
\end{equation*}
with $r=d(\alphab)-4g+4-n^+-n^-$.
\end{lem}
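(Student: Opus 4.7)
\medskip

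The plan is to establish a weight-preserving bijection between integral points of $\Mcs_{g,n^+,n^-}(\alphab^+|\alphab^-)$ and directed ribbon graphs with $v_4 = 2g-2+n^++n^-$ vertices of degree four and $r$ vertices of degree two, via edge subdivision. Given a quadrivalent directed ribbon graph $\Ro$ together with a positive integer metric $\ell \in \N^{X_1\Ro}$ realizing the prescribed perimeters $(\alphab^+,\alphab^-)$, replace each edge of length $\ell_e$ by a chain of $\ell_e$ unit edges, inserting $\ell_e - 1$ new bivalent vertices. The resulting ribbon graph $\tilde\Ro$ inherits the direction $\epsilon$ on boundary components and has combinatorial perimeters (edge counts) equal to $\alphab^+, \alphab^-$. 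Conversely, contracting every maximal chain of bivalent vertices recovers $(\Ro,\ell)$, so subdivision is an honest bijection of pairs. The count of bivalent vertices comes from $\sum_e(\ell_e-1) = \sum_e \ell_e - E(\Ro)$, combined with the standard edge count $E(\Ro) = 2v_4$ and the identity $\sum_e \ell_e = d(\alphab)$ (which follows from bipartiteness of the dual: each edge is incident to exactly one positive and one negative face).

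To upgrade this bijection to the weighted equality, I invoke orbit--stabilizer. An integral point of the orbifold $\Mcs_{g,n^+,n^-}(\alphab^+|\alphab^-) = \bigsqcup_\Ro \Met(\Ro,\alphab^+|\alphab^-)/\Aut(\Ro)$ is counted with weight $1/|\Stab_{\Aut(\Ro)}(\ell)|$. An automorphism of $\tilde\Ro$ must send bivalent vertices to bivalent vertices and quadrivalent vertices to quadrivalent vertices while preserving chain lengths, giving $\Aut(\tilde\Ro) = \Stab_{\Aut(\Ro)}(\ell)$. Hence
\begin{equation*}
P^{\circ}_{g,n^+,n^-}(\alphab^+|\alphab^-) = \sum_{[\Ro,\ell]} \frac{1}{|\Stab_{\Aut(\Ro)}(\ell)|} = \sum_{\tilde\Ro} \frac{1}{\#\Aut(\tilde\Ro)},
\end{equation*}
where the right-hand sum ranges over isomorphism classes of directed ribbon graphs with $v_4$ quadrivalent vertices, $r$ bivalent vertices, and labeled boundary perimeters $(\alphab^+,\alphab^-)$. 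Summing over $\alphab^-$ and dividing by $n^-!$ unlabels the negative boundaries, producing exactly $\Ro_{g,n^+,n^-,r}(\alphab^+)$ by definition.

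The main obstacle is the orbifold automorphism bookkeeping: verifying that subdivision descends to a bijection at the level of isomorphism classes and that stabilizers of integer metrics coincide with automorphism groups of the subdivided graphs. Everything else reduces to the routine numerical check of the edge count for a four-valent graph of type $(g,n^+,n^-)$ and the perimeter identity $\sum_e \ell_e = d(\alphab)$ coming from bipartiteness of the dual.
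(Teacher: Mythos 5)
Your proof is correct and is exactly the paper's argument --- the paper's entire proof of this lemma is the single sentence ``this follows from the fact that bivalent vertices define a metric on the edges,'' of which your edge-subdivision bijection, together with the identification $\Aut(\tilde\Ro)=\Stab_{\Aut(\Ro)}(\ell)$, is the fully worked-out version. One remark: your (correct) count gives $r=\sum_e(\ell_e-1)=d(\alphab)-(4g-4+2n^++2n^-)$, which does not literally match the $r=d(\alphab)-4g+4-n^+-n^-$ in the statement; the exponent in the statement appears to be a typo (it should read $r=d(\alphab)-2d_{g,n^+,n^-}$, consistent with $K_{g,n^+,n^-,m}$ having degree $2d_{g,n^+,n^-}+m$), and your value is the consistent one, so you should flag the correction rather than silently assert agreement.
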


\begin{proof}
This follows from the fact that bivalent vertices define a metric on the edges.
\end{proof}
This result allows us to relate generating series of integral points to $Z_{g,n^+,n^-,r}$. Let
\begin{equation*}
\mathbb{Z}_{g,n^+,n^-}(L)=\frac{1}{n^-!}\sum_{\alphab} P_{g,n^+,n^-}^{\circ}(\alphab) \frac{L^{\alpha}}{(\alphab-1) !}
\end{equation*}
Then the following formula holds:
\begin{prop}

The generating series $\mathbb{Z}_{g,n^+,n^-}$ and $Z_{g,n^+,n^-}$ are related by the following formula:
\begin{equation*}
\mathbb{Z}_{g,n^+,n^-}(L)=\exp(E(L))Z_{g,n^+,n^-}(L).
\end{equation*}
\end{prop}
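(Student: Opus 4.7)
The plan is to reduce the proposition to the identity $Z_{g,n^+,n^-,m}=\frac{E^m}{m!}Z_{g,n^+,n^-}$, which follows immediately from the relation $K_{g,n^+,n^-,m}=\frac{E^m}{m!}K_{g,n^+,n^-}$ established earlier in this section by applying both sides to the vacuum $\ez$.

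First I would apply Lemma \ref{lem_P_R} to rewrite the coefficients of $\mathbb{Z}_{g,n^+,n^-}$ in terms of $\Ro_{g,n^+,n^-,r(\alphab)}(\alphab)$, where $r(\alphab)$ is the value of $r$ produced by the lemma. The key observation is that for fixed $(g,n^+,n^-)$ and fixed perimeter vector $\alphab$, Euler's formula determines the number of bivalent vertices uniquely: a directed ribbon graph with $V_4$ four-valent vertices and $m$ bivalent ones has $E=2V_4+m$ edges, and the Euler relation forces $V_4=d_{g,n^+,n^-}$, so that $m=\sum_i\alpha_i-2d_{g,n^+,n^-}$ is the only value of $m$ for which $\Ro_{g,n^+,n^-,m}(\alphab)$ can be non-zero. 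Consequently, the sum over $\alphab$ defining $\mathbb{Z}_{g,n^+,n^-}$ can be expanded into a double sum over $\alphab$ and $m$ without modifying its value, and one obtains
\begin{equation*}
\mathbb{Z}_{g,n^+,n^-}(L)=\sum_{m\geq 0}Z_{g,n^+,n^-,m}(L),
\end{equation*}
using the generating series representation of $Z_{g,n^+,n^-,m}$ analogous to Proposition \ref{prop_combi_G}.

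Substituting $Z_{g,n^+,n^-,m}=\frac{E(L)^m}{m!}Z_{g,n^+,n^-}$ and summing the exponential series then yields
\begin{equation*}
\mathbb{Z}_{g,n^+,n^-}(L)=\left(\sum_{m\geq 0}\frac{E(L)^m}{m!}\right)Z_{g,n^+,n^-}(L)=\exp(E(L))\,Z_{g,n^+,n^-}(L),
\end{equation*}
which is the desired identity.

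The only non-routine part of the argument is the bookkeeping of the $\frac{1}{n^-!}$ factors (arising from the convention of unlabeled negative boundaries, both in the definition of $\mathbb{Z}_{g,n^+,n^-}$ and in the combinatorial interpretation of $Z_{g,n^+,n^-,m}$): one has to check that the normalizations on both sides match, which they do because the ribbon graphs are weighted by $\frac{1}{\#\Aut(\Ro)}$ in both countings. The conceptual content — that an integer metric on a quadrivalent directed ribbon graph is the same as a choice of bivalent subdivision vertices — is already encapsulated in Lemma \ref{lem_P_R}, so nothing further combinatorial is required.
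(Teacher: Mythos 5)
Your proposal is correct and follows essentially the same route as the paper: both reduce the statement to the identity $\mathbb{Z}_{g,n^+,n^-}=\sum_m Z_{g,n^+,n^-,m}$ via Lemma \ref{lem_P_R} and then substitute $Z_{g,n^+,n^-,m}=\frac{E^m}{m!}Z_{g,n^+,n^-}$ and sum the exponential series. Your explicit Euler-characteristic check that each perimeter vector $\alphab$ determines the number of bivalent vertices uniquely is a welcome clarification of what the lemma already encodes, but it is not a different argument.
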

\begin{proof}
Using Lemma \ref{lem_P_R}, we can obtain:
\begin{equation*}
\sum_{\alphab} P_{g,n^+,n^-}^{\circ}(\alphab) \frac{L^{\alphab}}{(\alphab-1) !}= \sum_{m} Z_{g,n^+,n^-,m}(L).
\end{equation*}
The polynomials $Z_{g,n^+,n^-,m}$ satifies the recurssion $mZ_{g,n^+,n^-,m}=E(L)Z_{g,n^+,n^-,m-1}$ and then we have 
\begin{equation*}
Z_{g,n^+,n^-,m}(L)=\frac{{E(L)}^m}{m!}Z_{g,n^+,n^-}(L).
\end{equation*}
Summing over $m$ gives the claimed identity.
\end{proof}
We can write the following relation
\begin{equation*}
\mathbb{Z}(q)=\Zbull(q,q).
\end{equation*}
which implies the following corollary:
\begin{cor}
The series $\mathbb{Z}$ satisfies the evolution equation
\begin{equation*}
\frac{\partial \mathbb{Z}}{\partial q}=(W_0+W_1) \mathbb{Z},
\end{equation*}
with initial condition $\mathbb{Z}(0)=\exp(t_0)$.
\end{cor}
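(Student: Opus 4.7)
The plan is to derive the evolution equation for $\mathbb{Z}$ directly from the two-parameter evolution equations satisfied by $\Zbull(q_0,q_1)$ via the chain rule, using the specialization $\mathbb{Z}(q)=\Zbull(q,q)$ established just above the corollary.

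First, I would invoke the preceding proposition that asserts
\begin{equation*}
\frac{\partial \Zbull}{\partial q_0}=W_0\Zbull, \qquad \frac{\partial \Zbull}{\partial q_1}=W_1\Zbull,
\end{equation*}
where $W_0$ and $W_1$ are formal differential operators in $\tb$, independent of $(q_0,q_1)$. Both sides live in the coefficient-wise-convergent space $\Q[[q_0,q_1]][[\tb]]$, so evaluation along the diagonal $q_0=q_1=q$ is well defined, as is differentiation of that diagonal series in $q$.

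Next I would apply the chain rule to $\mathbb{Z}(q)=\Zbull(q,q)$, writing
\begin{equation*}
\frac{\partial \mathbb{Z}}{\partial q}(q)=\left.\frac{\partial \Zbull}{\partial q_0}\right|_{(q,q)}+\left.\frac{\partial \Zbull}{\partial q_1}\right|_{(q,q)}=W_0\,\Zbull(q,q)+W_1\,\Zbull(q,q)=(W_0+W_1)\mathbb{Z}(q),
\end{equation*}
using linearity of $W_0,W_1$ in the second step (no commutation is needed here, although their commutation justifies the factorized form $\Zbull(q_0,q_1)=\exp(q_0W_0)\exp(q_1W_1)\cdot\ez$ and gives an alternative one-line derivation).

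Finally, the initial condition follows from $\Zbull(0,0)=\Kbull(0,0)\cdot\ez=\id\cdot\ez=\exp(t_0)$, since $\Kbull(0,0)=\id$ by construction. The only point requiring minor care is verifying that the diagonal substitution commutes with $\partial_q$ at the level of the formal series expansions in $q$ and $\tb$; this is immediate because each coefficient of $\tb^\mu$ in $\Zbull$ is a polynomial in $(q_0,q_1)$, so the chain rule holds term by term. There is no real obstacle: the corollary is a formal consequence of the product structure $\exp(q_0W_0+q_1W_1)\cdot\ez$ applied to the diagonal.
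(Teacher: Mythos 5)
Your proof is correct and matches the paper's (implicit) argument: the corollary is deduced exactly by combining the identity $\mathbb{Z}(q)=\Zbull(q,q)$ with the two evolution equations $\partial_{q_0}\Zbull=W_0\Zbull$, $\partial_{q_1}\Zbull=W_1\Zbull$ via the chain rule, and the initial condition from $\Kbull(0,0)=\id$. Your added remark that the diagonal substitution commutes with $\partial_q$ term by term is a harmless (and correct) bit of extra care.
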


\begin{rem}[Negative boundary components]
Unfortunately, we do not have control over the length of the negative boundary components. 
\end{rem}

\begin{appendices}
\section{Appendices}
\subsection{Duality and scalar product:}
\label{subsection_dual}

Functions $V_{g,n^+,n^-}$ have an additional symmetry that we can call times inversion. We have
\begin{equation*}
V_{g,n^+,n^-}(L^+|L^-)=V_{g,n^-,n^+}(L^-|L^+).
\end{equation*}
This is interesting for the following reason: we can consider, for instance, the scalar product on $S_n(V)$ given by
\begin{equation*}
(f,g)=\frac{1}{n!}\int_{\Rp^{n^+}} f(x)g(x)e^{-|x|}dx.
\end{equation*}
We then have the formula
\begin{equation*}
(f,V_{g,n^+,n^-}\cdot g) =\int_{\Lambda_{n^+,n^-}} f(L^+)V_{g,n^+,n^-}(L^+|L^-)g(L^-)e^{-|L^+|}\frac{d\sigma_{n^+,n^-}}{n^+!~n^-!}.
\end{equation*}
On $\Lambda_{n^+,n^-}$, we also have the relation $|L^+|=|L^-|$ by definition, and this implies that
\begin{eqnarray*}
\int_{\Lambda_{n^+,n^-}} f(L^+)V_{g,n^+,n^-}(L^+|L^-)g(L^-)e^{-|L^+|}\frac{d\sigma_{n^+,n^-}}{n^+!n^-!}
&=&\int_{\Lambda_{n^+,n^-}} f(L^+)V_{g,n^-,n^+}(L^-|L^+)g(L^-)e^{-|L^+|}\frac{d\sigma_{n^+,n^-}}{n^+!n^-!}\\
&=&\int_{\Lambda_{n^+,n^-}} g(L^-)V_{g,n^-,n^+}(L^-|L^+)f(L^+)e^{-|L^+|}\frac{d\sigma_{n^+,n^-}}{n^+!n^-!}
\end{eqnarray*}
And then we obtain the following nice formula:
\begin{equation*}
(f,V_{g,n^+,n^-}\cdot g)=(V_{g,n^-,n^+}\cdot f,g).
\end{equation*}
In particular, the operators $V_{g,n,n}$ are self-adjoint operators, and it might be possible to carry out their analysis. But we do not go deeper in this direction in this text. The time inversion then defines a structure of involutive algebra.

\subsection{Relation to Norbury polynomials and combinatorial interpretation of the Zukowsky map:}
\label{subsection_norbury}
The Norbury polynomials are Ehrhart quasi-polynomials that count the number of integral metric ribbon graphs. They have been introduced by P. Norbury in \cite{norbury2008counting} and studied also in \cite{norbury2013string}. Let $\Mc_{g,n,\Z}$ be the integral points in the combinatorial moduli spaces $\Mc_{g,n}$. For each $\alphab$, we denote $\Mc_{g,n,\Z}(\alphab)$ the subset of metric ribbon graphs such that $L_i(S)=\alpha_i$ for all $i\in \llbracket 1,n\rrbracket $. Then the Norbury polynomial is given by
\begin{equation*}
N_{g,n}(\alphab)=\sum_{S\in \M^{comb}_{g,n,\Z}(\alphab)} \frac{1}{\#\Aut(S)}.
\end{equation*}
The generating function for these polynomials is the Ehrhart series; it's given by
\begin{equation*}
 F^{comb}_{g,n}(y)=\sum_{g,n} N_{g,n}(\alphab)y^{\alphab}
\end{equation*}
and defined in \cite{norbury2013string}. The aim of this paragraph is to give a proof of a surprising and beautiful formula that relies on the series $F^{comb}_{g,n}$ and $ W^*_{g,n}$. As in \cite{norbury2013string}, we also denote 
\begin{equation*}
    \Omega_{g,n}=d_1 ...d_n N_{g,n}.
\end{equation*}
Where $d_i$ is the differential with respect to the $i-th$ variable.
\begin{thm}
\label{thm_W_norbury}
The generating series $F^{comb}_{g,n}$ and $W^*$ are related by the following formula:
\begin{equation*}
   F^{comb}_{g,n}(u(x))=W^*_{g,n}(x).
\end{equation*}
Where  $u$ is given by 
\begin{equation*}
    u(x)=\frac{x-\sqrt{x^2-4}}{2}.
\end{equation*}
Consecutively, we have the relation
\begin{equation}
\label{formula_norbury_1}
 W_{g,n}\otimes_i dx_i= u^* \Omega_{g,n}.
\end{equation}
\end{thm}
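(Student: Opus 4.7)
The approach is to interpret both sides as generating series for enumerative problems and reduce the identity to a single combinatorial bijection coming from the Zhukovsky substitution. The pivotal observation is that $u(x)=\frac{x-\sqrt{x^2-4}}{2}$ coincides with the disc amplitude $W_{0,1}(x)$ found earlier, hence satisfies the quadratic equation $u^2-xu+1=0$. This algebraic equation will turn out to encode precisely the Catalan-type recursion that governs insertion of negative faces inside a ribbon graph.

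\textbf{Step 1 (Combinatorial meaning of $W^{*}_{g,n}$).} First I would identify $W^{*}_{g,n}(x)$ as the formal Laplace transform of $\sum_{n^-}\mathbb{Z}_{g,n,n^-}(L)$. By Lemma \ref{lem_P_R}, its coefficients $P^{\circ}_{g,n}(\alphab)$ count integer points in the principal stratum $\Mc^{comb,*}_{g,n,n^-}(\alphab|\cdot)$, and by the bivalent-vertex interpretation developed in Section \ref{section_marked}, these are the same as directed ribbon graphs with quadrivalent and bivalent vertices, labeled positive boundaries of perimeters $\alphab$ and an arbitrary number of unlabeled negative boundaries, each counted with weight $1/\#\Aut(\Ro)$.

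\textbf{Step 2 (Bijection to decorated Norbury graphs).} Next I would construct a map from these directed ribbon graphs to pairs $(R,\text{decoration})$, where $R$ is a Norbury-type ribbon graph (any vertex valence, integer edge lengths, boundaries of perimeters $\alphab$). Generalizing the picture of Figure \ref{figure_bijection} to the bivalent case, one collapses every maximal chain of bivalent vertices separating two distinct negative faces; what remains is $R$, while the collapsed chains provide, along each unit of positive boundary, a planar configuration of adjacent negative bubbles.

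\textbf{Step 3 (Local generating series is $u$).} The heart of the argument is to show that the generating series $U(x)$ in $x^{-1}$ for the admissible local configurations attached to a single unit of positive boundary satisfies the fixed-point equation
\begin{equation*}
U \;=\; x^{-1}+x^{-1}U^{2},
\end{equation*}
obtained by splitting on whether the unit is empty or carries a first bivalent vertex followed by two independent nested configurations. Since the unique solution in $x^{-1}\Q[[x^{-1}]]$ of $U^{2}-xU+1=0$ is $u(x)$, the local decoration factor is exactly $u(x_i)$ per unit of boundary $i$.

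\textbf{Step 4 (Assembly and the differential form).} The bijection factorizes multiplicatively over positions on the boundaries of $R$, so
\begin{equation*}
W^{*}_{g,n}(x)\;=\;\sum_{R}\frac{1}{\#\Aut(R)}\prod_{i}u(x_i)^{\alpha_i(R)}\;=\;\sum_{\alphab}N_{g,n}(\alphab)\,u(x)^{\alphab}\;=\;F^{comb}_{g,n}(u(x)).
\end{equation*}
Formula \ref{formula_norbury_1} then follows by differentiating with respect to each $x_i$: the relation $dy_i=u'(x_i)\,dx_i$ turns $\Omega_{g,n}=d_1\cdots d_n F^{comb}_{g,n}$ into $W_{g,n}\otimes_i dx_i$ after pullback by $u$.

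The main obstacle is Step 3: precisely stating which local patterns are admissible (tracking the alternation of $\pm$ faces and the automorphisms of chains of bivalent vertices) and matching the resulting recursion with the Catalan one. A conceptual shortcut would be to bypass this step by invoking topological recursion: both families of differentials satisfy the E.\,O.\ recursion on $\mathbb{P}^{1}$ with branch points at $\pm 1$ under $x=z+z^{-1}$, and the two choices $y=z$ (Norbury) and $y=z^{-1}$ (this paper) only differ by the Galois involution, which leaves the recursion invariant since only $\omega_{0,1}-\sigma^{*}\omega_{0,1}$ enters. This automatically yields the equality of the two sides after the change of variable $y=u(x)$.
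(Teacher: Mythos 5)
Your Steps 2--4 are, in substance, the paper's own argument: decompose each graph into a Norbury core (an integral metric ribbon graph $S$ with vertices of valence at least three) together with a planted planar tree, equivalently a nested configuration of small negative faces, at each unit of boundary; observe that the local generating series satisfies the Catalan equation $U=x^{-1}+x^{-1}U^{2}$ and hence equals $u(x)$; and assemble multiplicatively into $\sum_{S}\#\Aut(S)^{-1}\prod_i u(x_i)^{L_i(S)}=F^{comb}_{g,n}(u(x))$, then differentiate to get Formula \ref{formula_norbury_1}. So the route is the same as the paper's, but two of your steps have genuine problems. Step 1 misidentifies what $W^{*}_{g,n}$ counts: it is the (antiderivative of the) Laplace transform of $Z_{g,n}=\sum_{n^-}Z_{g,n,n^-}$, so its coefficients are the numbers $\Ro_{g,n}(\alphab)$ of \emph{purely quadrivalent} directed ribbon graphs --- equivalently, via Figure \ref{figure_bijection}, ordinary ribbon graphs with boundaries of perimeters $\alphab$ and vertices of arbitrary valence, including univalent and bivalent ones. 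They are not the lattice-point numbers $P^{\circ}_{g,n}$ of Lemma \ref{lem_P_R}, which are the coefficients of $\mathbb{Z}_{g,n^+,n^-}=\exp(E)Z_{g,n^+,n^-}$; that extra factor $\exp(E)$ is exactly the bivalent decoration you reintroduce later, so starting from $\mathbb{Z}$ double-counts it (it amounts to the shift $x\mapsto x-1$ in the Laplace variable). The correct starting point is the arbitrary-valence model, where the Norbury core is obtained by pruning the planted trees rooted at univalent vertices and reading the surviving bivalent vertices of the core as an integer metric.

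The second gap is Step 3, which you rightly flag as the crux but do not carry out: the admissibility analysis (one planted planar tree per corner, $L_i(S)$ corners on the $i$-th boundary, no residual automorphisms because the trees sit at distinguished corners) \emph{is} the paper's key lemma, and the fixed-point equation alone does not replace it. The proposed topological-recursion shortcut does not rescue this. Under $x=z+z^{-1}$ one has $z^{-1}=x-z$, so passing from $y=z$ to $y=z^{-1}$ composes $y\mapsto y+x$ (harmless for stable $\omega_{g,n}$) with $y\mapsto -y$, which \emph{flips the sign} of $\omega_{0,1}-\sigma^{*}\omega_{0,1}$ rather than leaving it invariant; the recursion kernel therefore changes sign and the outputs acquire signs that must be tracked. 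Moreover TR only determines the stable differentials, so at best it would give Formula \ref{formula_norbury_1} up to these signs, while the undifferentiated identity $F^{comb}_{g,n}\circ u=W^{*}_{g,n}$ requires fixing constants of integration that TR does not see; and it needs Norbury's TR theorem as an external input, whereas the point of the statement is precisely to give a combinatorial explanation of why the two problems share a spectral curve.
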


\begin{proof}
Let $\rib_{g,n,*}$ be the set of ribbon graphs with $n$ faces and possibly univalent vertices; let also $\rib_{g,n}'$ be the set of ribbon graphs with no univalent vertices. There is a  a contraction map
\begin{equation*}
  \text{Trunc} :  \rib_{g,n,*}\longrightarrow \rib_{g,n},
\end{equation*}
which is, for instance, described in \cite{okounkov2000random}, or in figure \ref{figure_trunc}. This map is a fibration, fiber consists of trees planted around vertices of the graphs. We can choose to conserve the bivalent vertices that are in the trunc of the graph. These bivalent vertices define a metric on the ribbon graph, and then this induces a surjection
\begin{equation*}
    \rib_{g,n,*}\longrightarrow \M^{comb}_{g,n,\Z}.
\end{equation*}
Where $\M^{comb}_{g,n,\Z}$ is the set of all  integral metric ribbon graphs and the set of integral points in the combinatorial moduli space. This defines a new map 
\begin{equation*}
     \text{Trunc}' : \rib_{g,n}\longrightarrow \M^{comb}_{g,n,\Z}.
\end{equation*}

For each $S$ in the image, we can consider the formal series
\begin{equation*}
  T_S=  \sum_{R,\text{Trunc}'(R)=S}\frac{1}{\#\Aut(R)} \prod_i \frac{1}{x_i^{\alpha_i(R)}},
\end{equation*}
and then we have the push-forward formula:
\begin{equation}
\label{formula_trunc_1}
\sum_{R,Trunc'(R)=S}\frac{1}{\#\Aut(R)} \prod_i \frac{1}{x_i^{\alpha_i(R)}} = \sum_S \frac{T_S}{\#\Aut(S)}.
\end{equation}
Now we prove the following proposition:
\begin{lem}
Near $\infty$, we have the formula 
\begin{equation*}
    T_S=\prod_i u(x_i)^{L_i(S)},
\end{equation*}
where $L_i$ is the combinatorial length of the $i-th$ boundary.
\end{lem}

\begin{proof}
Using figure \ref{figure_trunc}, we see that the fiber corresponds to a planar tree planted at each "corner" of the boundary. In the generating series, we add to remember the perimeter of the boundary, and then each tree $T$ glued to the boundary $i$ is weighted by $\frac{1}{x_i^{2\#X_1T+1}}$ where $X_1T$ is the set of edges in the boundary. Because each edge in the tree contributes twice to the boundary length, we also include the edge on the "left" of the tree. Then, to compute the preimage, we start by computing the generating series associated with a single corner in a given boundary $i$. Let $u_0$ be the series
\begin{equation*}
    u_0(x)=\sum_T \frac{1}{x^{2\#X_1T}}.
\end{equation*}
The sum is over all the planar tree with a marked vertex. This generating series is easily computable; indeed, by removing the roots, we can derive the following recurrence relation:
\begin{equation*}
    u_0(x)=1+\frac{u_0(x)}{x^2}+\frac{u_0(x)^2}{x^4}+...=\frac{x^2}{x^2-u_0(x)}.
\end{equation*}
And then 
\begin{equation*}
    u_0(x)^2-x^2u_0(x)+x^2=0.
\end{equation*}
Solving this equation leads to the formula 
\begin{equation*}
    u_0(x)=\frac{x^2-\sqrt{x^4-4x^2}}{2},
\end{equation*}
where we use $u_0(x)=1+o(1)$. Then we obtain 
\begin{equation*}
    u(x)=\frac{u_0(x)}{x}=\frac{x-\sqrt{x^2-4}}{2}.
\end{equation*}
Then, to prove the lemma, the contribution of each corner is given by $u(x)$, and the contribution of the boundary $i$ in $S$ is 
\begin{equation*}
     u(x)^{L_i(S)}.
\end{equation*}
Because there are exactly $L_i(S)$ corners in the boundary $i$. As the contribution of the boundaries is independent, we obtain
\begin{equation*}
    T_S(x)=\prod_i u(x)^{L_i(S)}.
\end{equation*}
\end{proof}

Using this lemma, we finish the proof of \ref{thm_W_norbury}. It suffices to substitute the expression of $T_S(x)$ in the formula \ref{formula_trunc_1} to find $F_{g,n}^{comb}(u(x))$. We have in one hand
\begin{equation*}
    W_{g,n^+,n^-}^*(x)=\int_{\Lambda_{n^+,n^-}}V_{g,n^+,n^-},(L^+|L^-)\exp(\sum_i x_i L^+_i)d\sigma_{n^+,n^-}
\end{equation*}
and, on the other hand 
\begin{equation*}
   K_{g,n^+,n^-}(L^+|L^-)=\prod_i L_i^+ V_{g,n^+,n^-}(L^+|L^-).
\end{equation*}
Which leads to
\begin{equation*}
  W_{g,n^+,n^-}(x)=\partial_1...\partial_{n^+}  W_{g,n^+,n^-}^*(x).
\end{equation*}
Summing over $n^-$ gives the second claim of the theorem \ref{thm_W_norbury}.
\end{proof}

\begin{figure}[h!]
\centering
    \includegraphics[width=6cm]{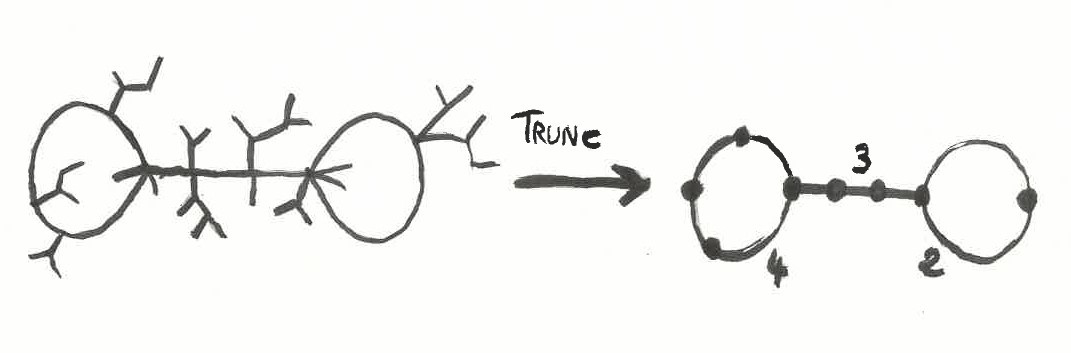}
    \caption{Effect of the map, Trunc.}
    \label{figure_trunc}
\end{figure}

\end{appendices}
\bibliography{biblio_tot}
\bibliographystyle{alpha}

\end{document}